\newtheorem{thm}{Theorem}
\newtheorem{defi}{Definition}
\newtheorem{prop}{Proposition}
\newtheorem{coroll}{Corollary}
\newtheorem{rmk}{Remark}
\newtheorem{lemma}{Lemma}
\newtheorem{notation}{Notation}
\newcommand\blfootnote[1]{%
  \begingroup
  \renewcommand\thefootnote{}\footnote{#1}%
  \addtocounter{footnote}{-1}%
  \endgroup
}
\newcommand{\RR}{\mathbb{R}}
\newcommand{\pD}{\partial D}
\newcommand{\CD}{\mathcal{D}}
\newcommand{\vertiii}[1]{{\left\vert\kern-0.25ex\left\vert\kern-0.25ex\left\vert #1 
    \right\vert\kern-0.25ex\right\vert\kern-0.25ex\right\vert}}
\newcommand{\bb}{\color{black}}
\newcommand{\dd}{\color{black}}
\title{A semigroup approach to the convergence rate of a collisionless gas.}
\author{Armand Bernou\footnote{ Sorbonne  Universit\'e,  CNRS,  Laboratoire  de  Probabilit\'e,  Statistique  et  Mod\'elisation,  F-75005 Paris, France. E-mail address: armand.bernou@sorbonne-universite.fr.} }
\begin{document}

\maketitle

\begin{abstract}
We study the rate of convergence to equilibrium for a collisionless (Knudsen) gas enclosed in a vessel in dimension $n \in \{2,3\}$. By semigroup arguments, we prove that in the $L^1$ norm, the polynomial rate of convergence $\frac{1}{(t+1)^{n-}}$ given \bb in \cite{TsujiRelaxationFreeMolecularGas2010}, \cite{Kuo2013} and \cite{Kuo2014} \dd can be extended to any $C^2$ domain, with standard assumptions on the initial data. This is to our knowledge, the first quantitative result in collisionless kinetic theory in dimension equal to or larger than 2 relying on deterministic arguments that does not require any symmetry of the domain, nor a monokinetic regime. The dependancy of the rate with respect to the initial distribution is detailed. \bb Our study includes the case where the temperature at the boundary varies. \dd
The demonstrations are adapted from a deterministic version of a subgeometric Harris' theorem recently established by  Ca\~nizo and Mischler \cite{CanizoMischler}. We also compare our model with a free-transport equation with absorbing boundary.
\end{abstract}

\blfootnote{\textit{2010 Mathematics Subject Classification:} 35B40, 82C40 (35F10).}

\blfootnote{\textit{Key words and phrases:} Transport equations, Maxwellian diffusion boundary conditions, subgeometric Harris' theorem, explicit, collisionless gas.} 

\textbf{Acknowledgements:} I would like to thank my supervisor, St\'ephane Mischler, for  suggesting  me  to study this  problem  and  for  all  the  fruitful  discussions  and  advices  he offered me. I would also like to thank Kazuo Aoki for pointing out the references \cite{Kuo2014} and \cite{Sone07}, and the anonymous referee for useful comments. This work was supported by grants from R\'egion \^Ile de France.

\section*{Introduction}

\paragraph{Model.} In this paper, we study the kinetic free-transport equation with Maxwell boundary conditions inside a domain $D$ in $\RR^n$ having closure $\bar{D}$, with $n \in \{2,3\}$:
\begin{equation}
\label{Problem1}
\left\{ 
	\begin{aligned}
    \partial_t f + v \cdot \nabla_x f  &= 0, \hspace{5.35cm}  (t,x,v) \in \RR_+^* \times G, \\
 \gamma_- f (t,x,v) &= K \gamma_+f(t,x,v), \hspace{3.3cm} (t,x,v) \in \RR_+ \times \partial_- G, \\ 
 f|_{t=0}(x,v)  &= f_0(x,v), \hspace{4.3cm} (x,v) \in G, 
	\end{aligned}
\right.
\end{equation}
where we use the notations $G := D \times \RR^n$, and, denoting $n_x$ the unit \textbf{inward} normal vector at $x \in \pD$, 
\[
\begin{aligned}
\partial_+ G &:= \{(x,v) \in \pD \times \RR^n, v \cdot n_x < 0\}, \\
\partial_- G &:= \{(x,v) \in \pD \times \RR^n, -(v \cdot n_x) < 0\}.
\end{aligned}
\]
Given  a function $\phi$ on $(0,\infty) \times \bar{D} \times \RR^n$, $\gamma_{\pm} \phi$
denotes its trace on $(0, \infty) \times \partial_{\pm} G$, provided this object is well-defined. The boundary operator $K$ is defined, for all $(t,x,v) \in \RR_+ \times \partial_- G$ and for $\phi$ supported on $(0,\infty) \times \partial_+ G$ such that $\phi(t,x,\cdot)$ belongs to $L^1(\{v': v' \cdot n_x < 0\})$, by
\begin{equation}
\begin{aligned}
\label{EqDefK}
 K\phi(t,x,v) &= \alpha(x) M(x,v) \int_{\{v' \in \RR^n: v' \cdot n_x < 0\}} \phi(t,x,v') |v' \cdot n_x| dv' \\
 &\quad + (1 - \alpha(x)) \phi(t,x,v - 2(v \cdot n_x)n_x).
 \end{aligned}
 \end{equation}
In this paper, we consider the standard (and physically relevant) case of the Maxwellian distribution at the boundary $\pD$,

\begin{equation}
\begin{aligned}
\label{EqDefM}
 M(x,v) = \frac{\tilde{c}(x)}{(2\pi\theta(x))^{\frac{n}{2}}} e^{-\frac{\|v\|^2}{2\theta(x)}}, \qquad x \in \pD, v \in \RR^n, 
 \end{aligned}
 \end{equation}
 where, for all $x \in \pD$, for $z \in \pD$, 
\begin{equation}\begin{aligned}
\label{EqDefTildeC}
 \tilde{c}(x) = \Big( \int_{\{v \cdot n_z < 0\}} \frac{1}{(2\pi\theta(x))^\frac{n}{2}} e^{-\frac{\|v\|^2}{2\theta(x)}} |v \cdot n_z| dv \Big)^{-1}, 
 \end{aligned}\end{equation}
which is independent of the choice of $z$ since the integrand is radial, so that 
\begin{equation}\begin{aligned}
\label{Eq2tildec}
\int_{\{v \cdot n_x < 0\}} M(x,v) |v \cdot n_x| dv = 1. 
\end{aligned}\end{equation}
The parameter $\theta(x)$ corresponds physically to the temperature at the point $x \in \pD$ of the boundary wall considered at rest. 
\vspace{.3cm}

\paragraph{Physical motivations.} This problem models the evolution of a Knudsen (collisionless) gas enclosed in the vessel $D$. For such diluted gases, the Lebesgue measure of the set of collisions between particles is 0, hence the collision operator of the Boltzmann equation describing statistically the dynamic, as introduced by Maxwell \cite{Maxwell}, vanishes. Particles in $D$ move according to the free-transport dynamic until they meet with the boundary. They are reflected at the boundary $\pD$ in a diffuse or specular manner, corresponding to the two terms in the definition of $K$: at a point $x \in \pD$, a fraction $1-\alpha(x)$ of the gas particles is specularly reflected, i.e., if $v \in \RR^n$ is the initial velocity, the outgoing velocity is given by

\vspace{-.6cm}

\begin{equation}\begin{aligned}
\label{EqEtax}
\eta_x(v) := v - 2(v \cdot n_x)n_x. 
\end{aligned}\end{equation}
The remaining fraction $\alpha(x)$ is diffusively reflected (and thus thermalized). The latter corresponds, physically, to the case where the particle is adsorbed by the wall before being re-emited inside the domain according to a new velocity distribution defined through $M$. More details on the derivation of this boundary condition can for instance be find in the monograph of Cercignani, Illner and Pulvirenti \cite[Chapter 8]{Cercignani94}.   For this model, the distribution function of the gas, $f(t,x,v)$, representing the density of particles in position $x \in \bar{D}$ with velocity $v \in \RR^n$ at time $t \geq 0$, satisfies (\ref{Problem1}). 

\paragraph{Link with the Boltzmann equation and convergence rate for (\ref{Problem1}).}
We study the rate of convergence towards equilibrium of (\ref{Problem1}). Taking $\theta \equiv \Theta$ for some $\Theta > 0$ so that $M$ only depends on $v$, the existence of a steady state and the convergence towards it (at least in a restricted context) is known since the work of Arkeryd and Cercignani \cite{ArkerydCercignani1993}. This equilibrium is given by, assuming the initial data to be of mass 1,
\begin{equation}\begin{aligned}
\label{EqDefEquilibrium}
f_{\infty}(x,v) = \frac{ e^{-\frac{\|v\|^2}{2\Theta}}}{|D| (2 \pi \Theta)^{\frac{n}{2}}}, \qquad (x,v) \in G,
\end{aligned}\end{equation}
where $|D|$ denotes the volume of $D$. In the collisional case, for instance when one studies the space-homogeneous Boltzmann equation with the same boundary condition as in (\ref{Problem1}), the famous H-theorem of Boltzmann gives a starting point from which Boltzmann \cite{BoltzmannLecturesgastheory1995} gave plausible arguments for the convergence towards an equilibrium as time goes to infinity. Once this convergence is established, a key question is the rate at which it occurs. Physically, one would also like to obtain an explicit form for the constant playing a role in the convergence rate, to avoid unsignificant values as one can find when working with the Poincaré recurrence theorem. Regarding Boltzmann equation with Maxwell boundary condition (or diffuse boundary condition, i.e. with $\alpha \equiv 1$) and constant temperature, there are strong reasons to believe that the convergence occurs at an exponential rate, i.e., that there exist $\lambda$, $C > 0$ such that if $f_t$ denotes the solution at time $t > 0$, for all $t \geq 0$,
\[ \|f_t - f_{\infty}\|_{L^1} \leq C e^{-\lambda t}, \]
where $f_{\infty}$ is the Maxwellian corresponding to the equilibrium of the system. On this matter, see for instance Villani \cite[18.5]{VillaniHypoco}, where it is established that the convergence rate is equal to, or better, than $t^{-\infty}$ in some Sobolev norm assuming some strong regularity estimates. However those estimates may not hold true in a non-convex setting, see \cite{GuoRegularity} for a discussion on those issues in a general context. When the initial data is close to the equilibrium, Guo \cite[Theorem 4]{Guo2010} proved the exponential convergence towards equilibrium. 

\vspace{.5cm}
This (expected) dissipative property of the previously mentioned Boltzmann equation is a consequence of two factors: the interactions with the boundary wall and the collision operator. On the other hand the model corresponding to (\ref{Problem1}) only deals with the interactions with the boundary wall. This leads to several natural questions.
\begin{enumerate}[i)]
\item Can we still prove a convergence towards an equilibrium ? In particular in the case where $\theta$ is not constant ?
\item Is the rate at which this convergence occurs exponential, as expected for the Boltzmann equation with similar boundary conditions ? If not, can we characterize this rate in a precise manner ?
\item Can we compute the corresponding constants explicitely ?
\end{enumerate}

\paragraph{Well-posedness and qualitative convergence.}  Arkeryd and Cercignani \cite{ArkerydCercignani1993} established the well-posedness of (\ref{Problem1}) in the $L^1$ setting. This allows one to associate a semigroup $(S_t)_{t \geq 0}$ to the evolution equation, so that given an initial datum $f_0$, $S_t f_0$ is the solution of (\ref{Problem1}) at time $t \geq 0$. The decay property of the distance with respect to the equilibrium, and thus the answer to the three questions above, can then be read at the level of the associated semigroup. 
\bb The fact that the answer to i) is positive is physically intuitive and has been established qualitatively in the convex setting and in dimension 3  by Arkeryd and Nouri \cite{ArkerydBoltzmannasymptoticsdiffuse1997}. \dd In the general setting, the obtention of a rate to answer ii) will provide an \textit{a posteriori} answer for question i) as well.

\paragraph{Known results for question ii).} Question ii) was first addressed numerically by Tsuji, Aoki and Golse \cite{TsujiRelaxationFreeMolecularGas2010}. They gave strong arguments to support the intuition that the rate of convergence in the $L^1$ norm will no more be exponential in this case, but rather polynomial of order $\frac{1}{t^n}$, where $n$ is the dimension of the problem. The absence of a spectral gap for the sole free-transport operator is a natural reason to think that the exponential rate cannot be reached for this model. Later, Aoki and Golse \cite{Aoki201187} proved that the rate of convergence is better than $\frac1t$ in the $L^1$ distance, with an additional assumption of symmetry of the domain and of the initial data, by means of Feller's renewal theory. \bb Still with this symmetry assumption on the domain and in dimensions $1$ to $3$, the problem was studied \textit{via} probabilistic methods by Kuo, Liu and Tsai \cite{Kuo2013}, see also \cite{Kuo2014} for the case where $\theta$ varies, with the sole diffuse condition, which corresponds to $\alpha \equiv 1$ with our notations. \dd The key idea is that the symmetry of the domain allows one to consider the intervals in time between two rebounds of a particle as independent and identically distributed random variables, and to deduce a law of large numbers from which one can control the flux of the solution at the boundary in the $L^{\infty}$ norm. \bb Kuo \cite{Kuo2015} later extended this result with similar tools to the case of the Maxwell boundary condition, in dimension 2.
\dd  Finally let us mention that Mokhtar-Kharroubi and Seifert \cite{Mokhtar2017} recently obtained an explicit polynomial rate in slab geometry (dimension 1). Their proof relies on a quantified version of Ingham's tauberian theorem.

\paragraph{Hypothesis and main result.} \bb While the methods used in \cite{Aoki201187}, \cite{Kuo2013}, \cite{Kuo2014} and \cite{Kuo2015} are difficult to adapt to a nonsymmetric setting, it seems intuitive to expect that the rate of convergence will be of the same order without this assumption. \dd In this work, we give, using a slightly modified version of the subgeometric Doeblin-Harris theory of Ca\~nizo and Mischler \cite{CanizoMischler}, an answer to questions i) and ii) and a partial answer to question iii) in the larger context of $C^2$ domains.

\vspace{.3cm}

Let us introduce some assumptions and key notations and present our main result. The dimension $n$ belongs to $\{2,3\}$. We endow $\RR^n$ and $\RR$ with the Lebesgue measure. The symbols $dx, dv, \dots$ denote this measure. 
We assume that the domain (open, connected) $D \subset \RR^n$ is bounded and $C^2$ with closure $\bar{D}$, and that the map $x \to n_x$ can be extended to the whole set $\bar{D}$ as a $W^{1,\infty}$ map, where $W^{1,\infty}$ denotes the corresponding Sobolev space. For any $k \in \mathbb{N}^*$, we use the Euclidian norm in $\RR^k$. We write $d(D)$ for the diameter of $D$
\[ d(D) = \sup_{(x,y) \in D^2} \|x-y\|. \] On $\bar{D} \times \RR^n$, setting \[\partial_0 G := \{(x,v) \in \pD \times \RR^n, v \cdot n_x = 0\},\] we define the map $\sigma$ by:
\begin{equation}\begin{aligned}
\label{NotatSigma}
\sigma(x,v) = \left \{ \begin{array}{l}
		 \inf \{t > 0, x + tv \in \pD\}, \qquad (x,v) \in \partial_- G \cup G, \\
		 0, \hspace{4.5cm} (x,v) \in \partial_+ G \cup \partial_0 G,
		 \end{array}
		 \right.
\end{aligned}\end{equation}
which corresponds to the time of the first collision with the boundary for a particle in position $x$ with velocity $v$ at time $t = 0$. The $L^1$ space on $G$, denoted $L^1(G)$ is the space of measurable $\RR$-valued functions $f$ such that 
\[ \|f\|_{L^1} := \int_{G} |f(x,v)| dv dx < \infty. \]
For any non-negative measurable function $w$ defined on $G$, we introduce the weighted $L^1$ space $L^1_w(G) = \{f \in L^1(G), \|fw\|_{L^1} < \infty\}$ endowed with the norm $\|f\|_{w} := \|fw\|_{L^1}$. For any function $f \in L^1(G)$, we define the mean of $f$ by
\begin{equation}\begin{aligned}
\label{EqDefNotationMass}
\langle f \rangle := \int_{G} f(x,v) dv dx.
\end{aligned}\end{equation}
For the function $\alpha: \pD \to [0,1]$ playing a role in the boundary condition, we assume that
\bb there exists a constant $c_0 \in (0,1)$ such that
\begin{equation}\begin{aligned}
\label{EqHypoAlpha}
\alpha(x) \geq c_0, \quad \forall x \in \pD.
\end{aligned}\end{equation}
\dd
This condition implies that $1 - \alpha(x) \leq 1 - c_0$ for all $x \in \pD$, a fact that will allow us to control the contribution of the specular component of the reflection at the boundary.
\bb
\begin{rmk}
In the case of the diffuse boundary condition, that is, when $\alpha \equiv 1$, our condition implies that one has to choose some $c_0 \in (0,1)$ rather than the value $1$ itself. Any value $c_0 \in (0,1)$ will satisfy (\ref{EqHypoAlpha}) in this case.
\end{rmk} 
\dd

We define the constant $c_4 \in (0,1)$ by the equation 
\begin{equation}\begin{aligned}
\label{EqDefc4}
(1-c_4)^4 = (1-c_0),
\end{aligned}\end{equation}
so that, for $i \in [1,4]$,  \[(1-c_4)^i \geq (1-c_0) .\]
Finally, we assume that the temperature function $\theta: \pD \to \RR_+$ is continuous, positive on $\pD$ compact, so that $(x,v) \to M(x,v)$ is continuous and positive.
We introduce the weights $\omega_{i}$ for $i \in \{1,\dots,4\}$ defined by setting, for all $(x,v) \in \bar{D} \times \RR^n$,
\begin{equation}\begin{aligned}
\label{EqDefWeightsOmega}
 \omega_{i}(x,v) = \Big(e^2 + \frac{d(D)}{\|v\| c_4} - \sigma(x,-v)\Big)^{i} \ln \Big(e^2 + \frac{d(D)}{\|v\| c_4} - \sigma(x,-v) \Big)^{-1.6}.
\end{aligned}\end{equation}
Note that $2 > 1.6 > \frac{n+1}{n}$ for $n \in \{2,3\}$. The idea behind this choice is that we will be able to interpolate a first result for $\omega_{n+1}$ by considering the weight $\omega_{n+1}^{\frac{n}{n+1}}$, and that the exponent of the logarithmic factor will still be smaller than $-1$.
 Our main results are the following.
\begin{thm}
\label{ThmMain}
There exists a constant $C > 0$ such that for all $t \geq 0$, for all $f,g \in L^1_{\omega_{n+1}}(G)$ with $\langle f \rangle = \langle g \rangle$, there holds
\[ \|S_t (f-g)\|_{L^1} \leq \frac{C \ln(1+t)^{n+2}}{(1+t)^{n+1}} \|f-g\|_{\omega_{n+1}}. \]
\end{thm}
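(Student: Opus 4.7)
The plan is to deploy the deterministic subgeometric Harris theorem of Cañizo--Mischler \cite{CanizoMischler} at the level of the semigroup $(S_t)$. This requires two main ingredients: a drift (Lyapunov) inequality for the weights $\omega_i$, and a Doeblin-type minorization on their sub-level sets. Mass conservation together with these ingredients will yield subgeometric convergence of $S_t(f-g)$ to zero, and a final interpolation will upgrade the naive rate at level $n+1$ to the one claimed.

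I would begin by fixing a trajectorial representation of $S_t f$. Using characteristics, $S_t f$ decomposes into a free-transport contribution supported on $\{\sigma(x,-v) > t\}$ and a boundary contribution obtained by iterating the boundary trace and the operator $K$. Splitting $K$ into diffuse and specular parts, the bound $1-\alpha \leq 1-c_0$ allows one to extract, after four rebounds, a "purely diffuse" contribution carrying a uniform fraction of the mass; the identity $(1-c_4)^4 = 1-c_0$ is the numerical bookkeeping of this extraction, which explains the appearance of $c_4$ in the weights $\omega_i$.

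Next, for the drift: along free flight $\sigma(x,-v)$ grows linearly, so $e^2 + d(D)/(\|v\|c_4) - \sigma(x,-v)$ decreases monotonically between two rebounds, and is reset at each diffuse rebound to a value depending only on the post-rebound speed. With the logarithmic exponent $-1.6 < -1$, careful averaging against $M$ should produce an estimate of the form
\[
S_t \omega_i \,\leq\, \varphi_i(t)\,\omega_i + b_i,
\]
where $\varphi_i$ is a subgeometric rate depending on $i$ and $b_i$ depends only on the geometry, $c_0$ and $\theta$. For the minorization, the sub-level sets of $\omega_{n+1}$ consist of phase points with velocities bounded below and with $\sigma(x,-v)$ not too small, i.e.\ particles that will undergo several boundary interactions in some fixed time $T$. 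The goal is to show that for every $R > 0$ there exist $T,\varepsilon > 0$ and a nontrivial measure $\nu$ on $G$ with
\[
S_T f \,\geq\, \varepsilon\, \langle f \rangle\, \nu, \qquad f \geq 0, \ \supp f \subset \{\omega_{n+1} \leq R\}.
\]
This is the main obstacle. In a general $C^2$ domain, without symmetry and with variable temperature, one has to control the iterated free-flight/rebound map through several rebounds: non-degeneracy of the relevant Jacobians away from grazing trajectories (using the $W^{1,\infty}$ regularity of $x \mapsto n_x$), positivity and continuity of $M$ and $\theta$ to lower-bound the diffuse kernel pointwise, and absorption of the specular contribution through the factor $1-c_0$.

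With drift and minorization in hand, \cite{CanizoMischler} delivers a subgeometric decay of $\|S_t(f-g)\|_{L^1}$ controlled by $\|f-g\|_{\omega_{n+1}}$. Since $(S_t)$ is an $L^1$-contraction on zero-mean data, interpolating this weighted bound against the plain $L^1$ bound via the weight $\omega_{n+1}^{n/(n+1)}$ (which behaves like $\omega_n$ up to the logarithmic-exponent correction foreseen after \eqref{EqDefWeightsOmega}) improves the raw rate to $\ln(1+t)^{n+2}/(1+t)^{n+1}$, as claimed.
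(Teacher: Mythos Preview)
Your outline has the right global architecture but two concrete steps would not go through as written.

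First, the drift inequality. You propose a standard Foster--Lyapunov bound $S_t\omega_i \leq \varphi_i(t)\,\omega_i + b_i$ coming from pointwise analysis along characteristics. The paper explicitly flags that the pointwise inequality $\mathcal{L}^*\omega_{n+1} \leq -\omega_n + \kappa$ (hence your form) is ``very difficult and perhaps impossible'' here, because all dissipation sits at the boundary; in the interior $v\cdot\nabla_x\,\omega_i$ has the wrong sign to produce any decay by itself. What the paper actually proves (Lemma~\ref{LemmaLyapunov}) is an \emph{integrated} inequality
\[
\|S_T f\|_{m_1} + C\int_0^T \|S_s f\|_{m_0}\,ds \;\leq\; \|f\|_{m_1} + b(1+T)\|f\|_{L^1},
\]
using $v\cdot\nabla_x\sigma(x,-v)=+1$ and a separate estimate on the boundary trace flux to close the $b(1+T)\|f\|_{L^1}$ term. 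Also, your reading of $c_4$ is off: it is not bookkeeping for ``four rebounds'', but the choice making $(1-c_0)^{1/i}\leq (1-c_4)$ for every weight exponent $i\in[1,4]$ (since $n+1\leq 4$), so that the specular contribution in the boundary computation of the Lyapunov inequality is absorbed, see \eqref{IneqPolynomSubgeom}.

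Second, the final step. Interpolating with $\omega_{n+1}^{n/(n+1)}$ goes the \emph{other} way: it is how the paper passes from Theorem~\ref{ThmMain} to Corollary~\ref{MainCorol}, \emph{weakening} the rate from $(1+t)^{-(n+1)}$ to $(1+t)^{-n}$ in exchange for the lighter weight $m_n$. It cannot upgrade a raw rate to $(1+t)^{-(n+1)}$. The paper's actual mechanism to reach the exponent $n+1$ is a two-scale weight argument: alongside $(\omega_{n+1},\omega_n)$ one introduces auxiliary weights $w_1(x,v)=\langle x,v\rangle\ln\langle x,v\rangle^{0.1}$ and $w_0=\ln\langle x,v\rangle^{0.1}$, proves a strict contraction $\vertiii{S_T f}_{w_1}+2\alpha\|S_T f\|_{w_0}\leq\vertiii{f}_{w_1}$, and then exploits the splitting $w_1\leq \lambda\,w_0 + \epsilon_\lambda\,\omega_{n+1}$ with $\epsilon_\lambda=\ln(\lambda)^{1.7}/\lambda^n$. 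Iterating, optimizing over $\lambda$ as a function of $kT$, and then telescoping the $w_0$ gain over $\sim l/2$ steps buys the extra power of $t$ (Steps~3--5 of Subsection~\ref{SubsectionMainProof}). This is the missing idea in your sketch.
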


For $i \in \llbracket n-1, n+1 \rrbracket$, we define the weight $m_i$ by setting, for all $(x,v) \in \bar{D} \times \RR^n$,
\begin{equation}\begin{aligned}
\label{EqDefWn}
m_{i}(x,v) = \Big(e^2 + \frac{d(D)}{\|v\| c_4} - \sigma(x,-v)\Big)^{i} \ln \Big(e^2 + \frac{d(D)}{\|v\| c_4} - \sigma(x,-v) \Big)^{-1.6 \frac{n}{n+1}}. 
\end{aligned}\end{equation}

A second theorem, which relies on similar arguments, answers question i) above even in the case where $\theta$ is not constant.
\begin{thm}
\label{ThmEquilibrium}
There exists a unique $f_{\infty} \in L^1_{m_n}(G)$ such that $0 \leq f_{\infty}$, $\langle f_{\infty} \rangle = 1$ satisfying
\[ \begin{aligned} 
v \cdot \nabla_x f_{\infty}(x,v) &= 0, \hspace{1.25cm} \qquad \qquad  (x,v) \in G, \\
\gamma_- f_{\infty}(x,v) &= K \gamma_+ f_{\infty}(x,v), \qquad (x,v) \in \partial_- G.
\end{aligned} \]
\end{thm}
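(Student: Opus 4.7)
The plan is to establish uniqueness by invoking Theorem \ref{ThmMain} and existence by a Krylov–Bogolyubov (Ces\`aro averaging) argument.

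\textbf{Uniqueness.} Suppose $f_1, f_2 \in L^1_{m_n}(G)$ both satisfy the conclusion of Theorem \ref{ThmEquilibrium}. Set $h := f_1 - f_2 \in L^1_{m_n}(G)$. Since each $f_i$ is stationary, $S_t h = h$ for all $t \geq 0$, while $\langle h \rangle = 0$. If one can upgrade $f_1, f_2 \in L^1_{\omega_{n+1}}(G)$, Theorem \ref{ThmMain} yields
\[ \|h\|_{L^1} = \|S_t(f_1 - f_2)\|_{L^1} \leq \frac{C \ln(1+t)^{n+2}}{(1+t)^{n+1}} \|f_1 - f_2\|_{\omega_{n+1}} \xrightarrow[t \to \infty]{} 0, \]
forcing $h = 0$. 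To achieve the upgrade, I would use the characteristic representation $f_\infty(x,v) = \gamma_+ f_\infty(x - \sigma(x,-v) v, \, v)$ stemming from $v \cdot \nabla_x f_\infty = 0$ together with the boundary condition $\gamma_- f_\infty = K \gamma_+ f_\infty$. The diffuse part of $K$ contributes a Maxwellian factor $\alpha(x) M(x,v) \sim e^{-\|v\|^2/2\theta(x)}$, which compensates the extra factor of order $(e^2 + d(D)/(\|v\| c_4) - \sigma(x,-v))$ distinguishing $\omega_{n+1}$ from $m_n$; the specular part, controlled by $1-\alpha(x) \leq 1-c_0$, can be absorbed by iteration.

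\textbf{Existence.} Pick a nonnegative $f_0 \in L^1_{\omega_{n+1}}(G)$ with $\langle f_0 \rangle = 1$, e.g.\ $f_0(x,v) = \mathbf{1}_D(x) |D|^{-1} (2\pi)^{-n/2} e^{-\|v\|^2/2}$, and form
\[ \mu_T := \frac{1}{T} \int_0^T S_t f_0 \, dt, \qquad T > 0. \]
The Lyapunov estimate underpinning Theorem \ref{ThmMain}, coming from the Ca\~nizo–Mischler framework and applied here with the weight $m_n$, provides $\sup_{t \geq 0} \|S_t f_0\|_{m_n} < \infty$ and, via de la Vall\'ee Poussin plus Dunford–Pettis, uniform integrability of $\{S_t f_0\}_{t \geq 0}$ in $L^1(G)$. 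Therefore $\{\mu_T\}_{T \geq 1}$ is $L^1$-weakly relatively compact. Extract a weak limit $f_\infty$: nonnegativity and $\langle f_\infty \rangle = 1$ pass to the weak limit, and lower semicontinuity of the weighted norm yields $f_\infty \in L^1_{m_n}(G)$. Invariance follows from the standard identity
\[ S_s \mu_T - \mu_T \;=\; \frac{1}{T} \int_T^{T+s} S_t f_0 \, dt - \frac{1}{T} \int_0^s S_t f_0 \, dt, \]
whose $L^1$ norm is $O(s/T)$. Passing to the limit in the weak formulation of \eqref{Problem1} (using standard trace theory to carry over the boundary condition) delivers the stationary system of Theorem \ref{ThmEquilibrium}.

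\textbf{Main obstacle.} The delicate step is the upgrade from $L^1_{m_n}$ to $L^1_{\omega_{n+1}}$ for invariant distributions, because the ratio $\omega_{n+1}/m_n$ blows up like $1/\|v\|$ as $\|v\| \to 0$, so it is not absorbed by $m_n$ alone. The Maxwellian trace at the boundary provides the required decay, but transferring this decay to $f_\infty$ on the whole of $G$ demands careful bookkeeping along characteristics combined with the iteration on specular reflections (using the constant $c_4$ defined in \eqref{EqDefc4}). Should this analytic route prove cumbersome, a robust fallback is to invoke the Doeblin-type minorization and Lyapunov ingredients that drive Theorem \ref{ThmMain}: in its standard form, the subgeometric Harris theorem yields uniqueness of the invariant probability inside the Lyapunov class $L^1_{m_n}$ directly, bypassing any weight-mismatch issue.
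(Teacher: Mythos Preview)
Your primary route to uniqueness has a genuine gap: the upgrade of a stationary $f_\infty$ from $L^1_{m_n}$ to $L^1_{\omega_{n+1}}$ is not available in general. The paper records this explicitly (see the remark following Corollary~\ref{MainCorol}): when $\theta$ is constant the equilibrium is the explicit Maxwellian \eqref{EqDefEquilibrium}, and $f_\infty\in L^1_{m_n}(G)\setminus L^1_{\omega_{n+1}}(G)$. Your heuristic that the Maxwellian trace ``compensates the extra factor of order $(e^2+d(D)/(\|v\|c_4)-\sigma(x,-v))$'' fails because the obstruction is at \emph{small} velocities, where $M(x,v)$ tends to a positive constant and gives no decay; the Gaussian helps only as $\|v\|\to\infty$. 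So the characteristic/iteration bookkeeping you outline cannot close. Your fallback (``subgeometric Harris yields uniqueness directly in $L^1_{m_n}$'') is the right idea, and the paper implements it concretely: one interpolates the rate of Theorem~\ref{ThmMain} (which lives in $L^1_{\omega_{n+1}}$) with the plain $L^1$ contraction at exponent $\theta=n/(n+1)$, using $m_n=\omega_{n+1}^{n/(n+1)}$, to obtain a rate directly in $L^1_{m_n}$ (Lemma~\ref{LemmaRatemn}); uniqueness then follows by your one-line argument with $h=f_1-f_2$.

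For existence your Krylov--Bogolyubov/Ces\`aro scheme is a genuinely different route from the paper's. The paper instead picks $g\in L^1_{m_{n+1}}(G)$ with $\langle g\rangle=1$ and shows that the discrete orbit $g_k:=S_{kT}g$ is \emph{Cauchy in $L^1_{m_n}(G)$}. This comes from applying the modified-norm contraction of Lemma~\ref{LemmaContractionm3} with the choice $\epsilon=1.6\frac{n}{n+1}-1$ (so that $\bar\omega_{n+1}=m_{n+1}$, $\bar\omega_n=m_n$) to the differences $f_k:=g_{k+1}-g_k$ and telescoping. That approach is constructive and delivers the limit directly in the target weighted space, with no compactness, no weak limits, and no need to pass the boundary/trace condition through a weak formulation. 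Your averaging argument should also work, but the steps you leave implicit (uniform $L^1_{m_n}$ bound on $S_tf_0$, stability of the boundary condition under weak $L^1$ limits via trace theory) are exactly where the analysis would have to be done, whereas the paper's Cauchy-sequence argument sidesteps them entirely.
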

\color{black}
Regarding the convergence towards equilibrium, we obtain by interpolation the following corollary from Theorem \ref{ThmMain}.

\begin{coroll}
\label{MainCorol}
There exists a constant $C' > 0$ such that for all $t \geq 0$, for all $f \in L^1_{m_n}(G)$ with $\langle f \rangle = 1$, for $f_{\infty}$ given by Theorem \ref{ThmEquilibrium},
\[ \begin{aligned}
\|S_t (f - f_{\infty}) \|_{L^1} \leq \frac{C' \ln(1+t)^{n+1}}{(1+t)^{n}} \|f-f_{\infty}\|_{{m_n}}.
\end{aligned} \]
\end{coroll}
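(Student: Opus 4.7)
The plan is to derive Corollary~\ref{MainCorol} by interpolating the decay rate given by Theorem~\ref{ThmMain} in $L^1_{\omega_{n+1}}$ against the trivial $L^1$ contraction $\|S_t h\|_{L^1} \leq \|h\|_{L^1}$. Setting $W(x,v) := e^2 + \frac{d(D)}{\|v\| c_4} - \sigma(x,-v)$, we have $\omega_{n+1} = W^{n+1}(\ln W)^{-1.6}$ and $m_n = W^n (\ln W)^{-1.6 n/(n+1)}$, so that $m_n = \omega_{n+1}^{n/(n+1)}$ and $L^1_{m_n}$ sits on the interpolation scale between $L^1$ and $L^1_{\omega_{n+1}}$. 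The exponent $n/(n+1)$ is exactly the one that turns the rate $(1+t)^{-(n+1)}$ of Theorem~\ref{ThmMain} into $(1+t)^{-n}$.

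Given $f \in L^1_{m_n}$ with $\langle f \rangle = 1$ and $f_{\infty}$ from Theorem~\ref{ThmEquilibrium}, set $h := f - f_{\infty}$, so that $\langle h \rangle = 0$ and $S_t(f - f_{\infty}) = S_t h$. Since $\omega_{n+1}$ blows up like $\|v\|^{-(n+1)}$ as $\|v\| \to 0$, the equilibrium $f_{\infty}$ need not lie in $L^1_{\omega_{n+1}}$, so Theorem~\ref{ThmMain} cannot be applied directly with $g = f_{\infty}$; a truncation in the weight is necessary. Fix a nonnegative $\chi \in L^1_{\omega_{n+1}}$ with $\langle \chi \rangle = 1$, and for $R \geq e^2$ to be chosen later, decompose
\[
h = \bigl( h \mathbf{1}_{\{W \leq R\}} - c_R \chi \bigr) + \bigl( h \mathbf{1}_{\{W > R\}} + c_R \chi \bigr), \qquad c_R := \int_G h(x,v) \mathbf{1}_{\{W(x,v) \leq R\}} \, dx \, dv,
\]
so that both summands have zero mean. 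Apply Theorem~\ref{ThmMain} to the first piece, which lies in $L^1_{\omega_{n+1}}$, and bound the second by $L^1$ contraction.

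Using that on $\{W \leq R\}$ one has $\omega_{n+1} \leq W\, m_n \leq R\, m_n$, and that on $\{W > R\}$ the weight $m_n$ is increasing in $W$ (since $W \geq e^2 \geq e^{1.6/(n+1)}$) with $m_n \geq R^n (\ln R)^{-1.6 n/(n+1)}$, I would obtain
\[
\|h \mathbf{1}_{\{W \leq R\}}\|_{\omega_{n+1}} \leq R \|h\|_{m_n}, \qquad \|h \mathbf{1}_{\{W > R\}}\|_{L^1} + |c_R| \leq 2 R^{-n} (\ln R)^{1.6 n/(n+1)} \|h\|_{m_n}.
\]
Inserting these bounds into the splitting and absorbing lower-order cross terms yields, for some constant,
\[
\|S_t h\|_{L^1} \lesssim \Bigl[ \frac{R \, \ln(1+t)^{n+2}}{(1+t)^{n+1}} + \frac{(\ln R)^{1.6 n/(n+1)}}{R^n} \Bigr] \|h\|_{m_n}.
\]

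The last step is to optimize $R = R(t)$. Choosing $R \sim (1+t)/\ln(1+t)^{a}$ for a suitable $a > 0$ balances the two contributions and gives the announced rate $(1+t)^{-n} \ln(1+t)^{n+1}$. The main delicate point I foresee is precisely this optimization: the naive choice $R = 1 + t$ only yields the logarithmic exponent $n+2$, and the improvement to $n+1$ makes essential use of the strictly smaller logarithmic exponent $-1.6 n/(n+1)$ present in $m_n$ (as opposed to $-1.6$ in $\omega_{n+1}$), which is exactly why the weight $m_n$ is defined as $\omega_{n+1}^{n/(n+1)}$ and not by some rougher rescaling.
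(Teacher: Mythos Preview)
Your approach is sound and is the hands-on counterpart of what the paper does abstractly: the paper packages the whole argument into one line by invoking the Stein--Weiss interpolation theorem for weighted $L^1$ spaces (their Corollary~\ref{firstCorollInterpol}) on the complemented subspace of zero-mean functions, obtaining directly
\[
\vertiii{S_t}_{L^1_{m_n,0}\to L^1_0}\;\le\;C\,\tilde\Theta(t)^{\frac{n}{n+1}}
\;=\;C\,\frac{\ln(1+t)^{\frac{n(n+2)}{n+1}}}{(1+t)^n}\;\le\;C\,\frac{\ln(1+t)^{n+1}}{(1+t)^n},
\]
and then applying this with $g=f_\infty$. Your level-set splitting with a mass corrector $c_R\chi$ is exactly the $K$-functional decomposition underlying that interpolation, so conceptually the two routes coincide; yours is more elementary and self-contained, theirs is shorter once the abstract machinery is in place.

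There is, however, a genuine quantitative gap in your optimization step. With the bound you actually write, $\omega_{n+1}\le W\,m_n\le R\,m_n$ on $\{W\le R\}$, balancing the two terms
\[
R\,\frac{\ln(1+t)^{n+2}}{(1+t)^{n+1}}
\qquad\text{and}\qquad
R^{-n}(\ln R)^{1.6\,n/(n+1)}
\]
by $R=(1+t)/\ln(1+t)^a$ gives the logarithmic exponent $\frac{n(n+2)}{n+1}+\frac{1.6\,n}{(n+1)^2}$, which is \emph{strictly larger} than $n+1$ (e.g.\ $\approx 3.02$ for $n=2$, $\approx 4.05$ for $n=3$). So no choice of $a$ yields the claimed $\ln(1+t)^{n+1}$ with your stated estimates. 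The fix is to keep the full identity $\omega_{n+1}/m_n = W(\ln W)^{-1.6/(n+1)}$, which is increasing on $\{W\ge e^2\}$, so that on $\{W\le R\}$ one has the sharper bound $\omega_{n+1}\le R(\ln R)^{-1.6/(n+1)}m_n$. With this extra factor the balanced value becomes exactly $\tilde\Theta(t)^{n/(n+1)}$, i.e.\ logarithmic exponent $n(n+2)/(n+1)<n+1$, matching the paper. Your closing remark that the smaller log exponent in $m_n$ is essential is correct in spirit, but the point is that \emph{both} halves of that exponent---the $1.6n/(n+1)$ in the tail and the complementary $1.6/(n+1)$ in the truncation---must be retained.
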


We make several remarks on those results.

\begin{rmk}[About question \textit{iii}]
The constants $C$, $C'$ in Theorem \ref{ThmMain} and Corollary \ref{MainCorol} are explicit (constructive) in the easy case of the unit disk. We believe that for any given domain $D$, one may find explicit constants using the geometry of $D$. The measure given by Doeblin-Harris condition is the only part of the proof where one may lose the constructive property of the constants, see Remark \ref{RmkExplicitNu} for more details.
\end{rmk}

\color{black}

\begin{rmk}
In general, we do not have $f_{\infty} \in L^1_{\omega_{n+1}}(G)$. In particular, in the case where $\theta \equiv \Theta$ with $\Theta > 0$ constant, $f_{\infty}$ is explicit and given by (\ref{EqDefEquilibrium}), and $f_{\infty} \in L^1_{m_n}(G) \setminus L^1_{\omega_{n+1}}(G)$. Therefore, one cannot apply Theorem \ref{ThmMain} to study the convergence towards equilibrium. This limiting role of the equilibrium distribution when computing a rate of convergence is well-known in the probabilistic version of the Doeblin-Harris theory used in this paper, see for instance Douc, Fort and Guillin \cite{DOUC2009897} and Hairer \cite{Hairer}.
\end{rmk}

\bb 
\begin{rmk}
To complete the previous remark, note that we do not have $f_{\infty} \in L^1_m(G)$ for a weight $m$ such that $m(x,v) \underset{v \to 0}{\sim} \frac{C}{\|v\|^n}$ for some constant $C > 0$. This is a slight drawback of the method, which prevents us from obtaining the optimal rate $\frac{1}{(t+1)^n}$ from Kuo, Liu and Tsai \cite{Kuo2013, Kuo2014, Kuo2015}. However, the rate obtained here is almost optimal in the sense that it is better than $\frac{1}{(1+t)^{n-\epsilon}}$ for all $\epsilon > 0$. Moreover, even in the unit ball of dimension $n$, our result is slightly different from the one of Kuo et al.: the space of initial data is not the same. Their space of initial data $L^{\infty,\mu}(G)$ is included in $L^1_{m_n}(G)$, while the converse is not true, as examplified by the function of $L^1_{m_n}(G) \setminus L^{\infty,\mu}(G)$ given by:
\[  f(x,v) = \frac{1}{|D|} \frac{|\ln(\|v\|)|^{\frac{1}{2}(1.6\frac{n}{n+1}-1)}}{1 + \|v\|^{n+1}}, \qquad  (x,v) \in G.  \]
\end{rmk}

\bb
\begin{rmk}
\label{RmkEquilibrium}
The existence result in theorem \ref{ThmEquilibrium} (in particular in the case where $\theta$ varies) can also be seen as a consequence of the explicit formula for the equilibrium $f_{\infty}$ obtained by Sone, see the monograph \cite[Chapter 2, Section 2.5, Equation (2.48)]{Sone07} in the form of an infinite series. In this paper, we deduce the result from Theorem \ref{ThmMain} and do not make use of this explicit form of $f_{\infty}$.
\end{rmk}
\dd

\begin{rmk}
The hypothesis $f \in L^1_{m_n}(G)$ is quite general even if $f$ charges 0, since $m_n(x,v) \underset{v \to 0}{\sim} \frac{C}{\|v\|^n \ln(\|v\|)^{\frac{1.6n}{n+1}}}$ for some $C > 0$. For instance if one considers, as in \cite{Aoki201187}, $\theta \equiv 1$ so that $M$ is independent of $x$ and $f \in L^1(G)$ with $0 \leq f(x,v) \lesssim M(v)$ on $D \times \RR^n$, the assumption is satisfied. 
\end{rmk}


\begin{rmk}
 The boundary condition prevents one from considering higher-order moments, with weight exponents of order larger than $n+1$. Hence Theorem \ref{ThmMain} cannot be improved by considering a higher weight $\omega_{n+2}$ such that $\omega_{n+2}(x,v) \underset{v \to 0}{\sim} \frac{C}{\|v\|^{n+2} \ln(\|v\|)^{1.6}}$ for some $C > 0$. Indeed, the boundary condition becomes a limiting factor for the Lyapunov condition that we will use (see Section \ref{SectionSubGeom} below for more details): to be compatible with our proof, a weight $w$ must satisfy, for all $x \in \pD$,
\[ \int_{\RR^n} M(x,v) |v\cdot n_x| w(x,v) dv < \infty. \]
\end{rmk}

 In \cite{BernouFournierCollisionless}, the authors study a similar model with probabilistic tools, more precisely they use a coupling of two Markov processes to derive a rate similar (up to logarithmic factors) to the one of Corollary \ref{MainCorol}. This method allows one to treat, in the space of measures, various choices of $M$ independent of $x$. Indeed, they only assume that $M$ is radial, with a first order moment and that $M$ is lower bounded by a continuous positive function in a ball around $0$. The domain is again assumed to be $C^2$. 
 
 \color{black}

\paragraph{Comparison with absorbing boundary condition.} We conclude the paper by studying the following close problem
\begin{equation}
\label{Problem2}
\left\{ 
	\begin{aligned}
    \partial_t f + v \cdot \nabla_x f  &= 0, \hspace{5.4cm}  (t,x,v) \in \RR_+^* \times G, \\
 \gamma_- f (t,x,v) &= 0, \hspace{5.4cm} (t,x,v) \in \RR_+ \times \partial_- G, \\ 
 f|_{t=0}(x,v)  &= f_0(x,v), \hspace{4.35cm} (x,v) \in G, 
	\end{aligned}
\right.
\end{equation}
i.e., we take $K \equiv 0$ in (\ref{Problem1}). We set, for $\nu > 0$,
\begin{equation}\begin{aligned}
\label{EqDefWeightR}
r_{\nu}(x,v) = (1 + \sigma(x,v))^{\nu}, \qquad (x,v) \in \bar{G}.
\end{aligned}\end{equation}
We refer to Theorem \ref{ThmAbsorbing} for precise results on (\ref{Problem2}). The rough conclusions of the comparison between the two models are the following.
\begin{enumerate}[1)]
\item For very regular initial data, typically if $f \in L^1(G)$ with $f \mathbf{1}_{\{\|v\| \leq \epsilon\}} = 0$ for some $\epsilon > 0$, the convergence rate is exponential in (\ref{Problem2}) while it is only of order $n+1$ (up to log factors) in (\ref{Problem1}), because of the influence of the boundary condition.
\item With the assumption $f$, $g$ in $L^1_{r_{n+1}}(G)$, the convergence rate of $f_t - g_t$, with obvious notations, is polynomial with, roughly, exponent $n+1$ for both problems.
\item More generally, for $f \in L^1_{r_{\nu}}(G)$, $g \in L^1_{r_{\nu-\delta}}(G)$ with $\nu - \delta > 1$, $\delta > 0$, the exponent of the polynomial rate of convergence is $\nu - \delta$ in (\ref{Problem2}). In particular, if $f \in L^1_{r_{n+1-}}(G)$, the exponent of the polynomial convergence rate towards equilibrium is roughly $n+1$ in (\ref{Problem2}) since the equilibrium $0$ belongs to  $L^1_{r_{n+1-}}(G)$ while it is only $n$  (up to log factors) in  (\ref{Problem1}) since the equilibrium $f_{\infty}$ belongs to $ L^1_{r_{n-}}(G) \setminus L^1_{r_{n+1-}}(G)$.
\end{enumerate}

\paragraph{Proof strategy.} Our proof of Theorem \ref{ThmMain} is purely deterministic. While this proof is also self contained, it is adapted from the method introduced in \cite{CanizoMischler}. Let us elaborate on the strategy. The first step towards the obtention of a Harris' theorem is to prove that, setting $\mathcal{L}$ the operator such that the evolution problem (\ref{Problem1}) rewrites as a Cauchy problem,
\begin{equation}\begin{aligned}
\label{CauchyPb}
\partial_t f &= \mathcal{L} f \hspace{1cm} \text{ in } \bar{D} \times \RR^n, \\
f(0,.) &= f_0(.) \qquad \text{ in } G, 
\end{aligned}\end{equation}
we have the inequality
\begin{equation}\begin{aligned}
\label{EqDoeblinLyapunovClassic}
\mathcal{L}^* \omega_{n+1} \leq - \omega_n + \kappa,
\end{aligned}\end{equation} 
with $\kappa > 0$ constant and $\mathcal{L}^*$ the adjoint operator of $\mathcal{L}$, and that such inequality also holds by considering various couples of weights instead of $(\omega_{n+1}, \omega_n)$. It turns out that, since in our model the whole dissipative component is localized at the boundary, (\ref{EqDoeblinLyapunovClassic}) is very difficult and perhaps impossible to obtain. On the other hand, using that 
\[ v \cdot \nabla_x \sigma(x,v) = -1 \qquad \text{ in } G, \] one can prove an integrated version of (\ref{EqDoeblinLyapunovClassic}), namely that there exist $C_1, b_1 > 0$ such that for all $T > 0$, $f \in L^1_{\omega_{n+1}}(G)$,
\begin{equation}\begin{aligned}
\label{EqIntegratedLyapunovIntro}
\|S_T f\|_{\omega_{n+1}} + C_1 \int_0^T \|S_s f\|_{\omega_{n}} ds \leq \|f\|_{\omega_{n+1}} + b_1(1+T) \|f\|_{L^1},
\end{aligned}\end{equation}
and that this inequality also holds for various couples of weights. 

As a second step, we prove a positivity result (Doeblin-Harris condition) for the semigroup $(S_t)_{t \geq 0}$, Theorem \ref{ThmDoeblinHarris}. By following the characteristics of (\ref{Problem1}) backward, we prove that there exists $R_0 > 0$ such that for all $R > R_0$,  there exist $T(R) > 0$ and a non-negative measure $\nu$ on $\bar{D} \times \RR^n$ with $\nu \not \equiv 0$ such that for all $(x,v) \in G$,
\begin{equation}\begin{aligned} 
\label{EqDoeblinIntro} 
S_{T(R)} f(x,v) \geq \nu(x,v) \int_{\{(y,w) \in D \times \RR^n: \sigma(y,w) \leq R\}} f(y,w) dw dy. 
\end{aligned}\end{equation}
The measure $\nu$ depends on $D$ and whether or not it is constructive is the key point for question iii), see Remark \ref{RmkExplicitNu} below. As already mentioned, if $\nu$ is explicit, the constants $C, C'$ of Theorem \ref{ThmMain} and Corollary \ref{MainCorol} are constructive. 

To obtain the proof of Theorem \ref{ThmMain}, we assume without loss of generality that $g = 0$ and that $f \in L^1_{\omega_{n+1}}(G)$ with $\langle f \rangle = 0$. We fix $T > 0$ large enough and introduce some modified norm \[ \vertiii{.}_{\omega_{n+1}} = \|.\|_{L^1} + \beta \|.\|_{\omega_{n+1}} + \alpha \|.\|_{\omega_n}, \]
for two well-chosen constants $\alpha, \beta > 0$ depending on $T$. We prove, with the help of (\ref{EqIntegratedLyapunovIntro}) and of the Doeblin-Harris condition, that
\begin{equation}\begin{aligned}
\label{EqNormOmegaIntro}
\vertiii{S_T f}_{\omega_{n+1}} \leq \vertiii{f}_{\omega_{n+1}}. 
\end{aligned}\end{equation}
We then introduce some further weights $w_0, w_1$ such that $1 \leq w_0 \leq w_1 \leq \omega_{n+1}$. With a similar argument, we find that, for some modified norm $\vertiii{.}_{w_1}$, for $T$ as above and $\tilde{\alpha} > 0$ well-chosen,
\begin{equation}\begin{aligned}
\label{EqNormw1Intro}
\vertiii{S_T f}_{w_1} + 2 \tilde{\alpha} \|f\|_{w_0} \leq \vertiii{f}_{w_1}.
\end{aligned}\end{equation}
We use repeatedly (\ref{EqNormOmegaIntro}) and (\ref{EqNormw1Intro}), along with the inequalities satisfied by the weights, to conclude.  

Theorem \ref{ThmEquilibrium} is obtained from Theorem \ref{ThmMain} and a refined version of (\ref{EqNormOmegaIntro}) with the couple $(m_{n+1},m_n)$ instead of $(\omega_{n+1},\omega_n)$. Once Theorem \ref{ThmEquilibrium} is established, Corollary \ref{MainCorol} follows from Theorem \ref{ThmMain} by an interpolation argument. 

\paragraph{Proof strategy for the study of (\ref{Problem2}).} 
To compute the convergence rate towards equilibrium of (\ref{Problem2}), we use a method  introduced by Hairer \cite{Hairer} which is much more direct than the previous strategy. This proof can not be easily applied to study (\ref{Problem1}) because of the boundary condition and its impact on the derivation of the Lyapunov inequality. On the other hand the strategy to prove Theorem \ref{ThmMain} can not be adapted easily here because the Doeblin-Harris condition, Theorem \ref{ThmDoeblinHarris}, does not hold. The proof in the case of an exponential weight is a straightforward application of Gronwall's lemma. In the polynomial case, i.e. when the initial data $f \in L^1_{r_{\nu}}(G)$ for some $\nu > 1$, the idea is to prove that,
\[ \mathcal{B}^* r_{\nu} \leq - \phi(r_{\nu}),\]
where $\mathcal{B}$ is the generator of the semigroup which can be associated to (\ref{Problem2}) and where $\phi(x) = \nu x^{\frac{\nu-1}{\nu}}$ for all $x \geq 1$ is a concave function. We then define, for $t \geq 0$, $u \geq 1$, $$\psi(t,u) = (H(u) + t + 1)^{\nu}, $$ with $H(u) = \int_1^u \frac1{\phi(s)} ds = u^{\frac{1}{\nu}} - 1$ for all $u \geq 1$ and prove that $t \to \|S_t f\|_{\psi(t,r_{\nu})}$ is non-increasing in $\RR_+$ using the differential properties of $\psi$, where $(S_t)_{t\geq 0}$ is now the semigroup associated to (\ref{Problem2}). To conclude, we have for all $t \geq 0$,
\[ (t+1)^{\nu} \|S_t f\|_{L^1} \leq \|S_t f\|_{\psi(t,r_{\nu})} \leq \|f\|_{\psi(0,r_{\nu})} = \|f\|_{r_{\nu}}, \]
and the polynomial rate $(t+1)^{-\nu}$ follows. In both cases, the constants are constructive.
\vspace{.5cm}

\noindent \textbf{Plan of the paper.} In Section \ref{SectionSetting} we introduce a few notations and recall some basic properties of (\ref{Problem1}). In Section \ref{SectionSubGeom} we prove the Lyapunov inequality (\ref{EqIntegratedLyapunovIntro}) for several couples of weights. In Section \ref{SectionDoeblin} we prove the Doeblin-Harris condition satisfied by the semigroup $(S_t)_{t \geq 0}$, (\ref{EqDoeblinIntro}). In Section \ref{SectionInterpolation} we recall some interpolation results for $L^1$-weighted space and give very slight extensions in the case of spaces defined through a projection. The proof of Theorem \ref{ThmMain}, Theorem \ref{ThmEquilibrium} and Corollary \ref{MainCorol} is done in Section \ref{SectionProof} using the previous results. Section \ref{SectionAbsorbing} is devoted to the study of the case of an absorbing boundary condition with the strategy detailed above.

\section{Setting and first properties}
\label{SectionSetting}

\subsection{Notations and associated semigroup}
We first set some notations. 
For any set $B$, we write $\bar{B}$ for the closure of $B$.
 For any space $E$, we write $\CD(E) = C^{1}_c(E)$ the space of test functions ($C^1$ with compact support) on $E$. We write $d\zeta(x)$ for the surface measure at $x \in \pD$. We denote by $\mathcal{H}$ the $n-1$ dimensional Hausdorff measure.

For a function $f \in L^{\infty}([0,\infty); L^1( \bar{D} \times \RR^n)),$ admitting a trace $\gamma f$ at the boundary we write $\gamma_{\pm} f$ for its restriction to $(0, \infty) \times \partial_{\pm} G$. This corresponds to the trace obtained in Green's formula, see Mischler \cite{Mischler99}. If $f$ is a solution to (\ref{Problem1}) with initial data $f_0 \in L^1(G)$ the traces are well-defined, see Arkeryd and Cercignani \cite[Section 3]{ArkerydCercignani1993}. 
\begin{lemma}
The boundary operator $K$ defined by (\ref{EqDefK}) is non-negative and satisfies, for all $t \geq 0$, $x \in \pD$, for all $f$ solution to (\ref{Problem1}) with $f_0 \in L^1(G)$, $f$ regular enough so that both integrals are well-defined,
\begin{equation}\begin{aligned}
\label{EqNormK}
\int_{\{v \cdot n_x > 0\}} K \gamma f(t,x,v) (v \cdot n_x) dv  = \int_{\{v \cdot n_x < 0\}} \gamma f(t,x,v) |v \cdot n_x| dv. 
\end{aligned}\end{equation}
\end{lemma}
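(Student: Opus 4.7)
The proof is essentially a direct computation that splits the operator $K$ into its diffuse and specular contributions and handles each separately.

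First, for non-negativity, I would simply observe that $K$ decomposes as a sum of two terms. For $\phi \geq 0$ supported on $\partial_+ G$: the diffuse term $\alpha(x) M(x,v) \int_{\{v' \cdot n_x < 0\}} \phi(t,x,v') |v' \cdot n_x| dv'$ is a product of non-negative factors (using $\alpha(x) \in [0,1]$ and $M \geq 0$ from its explicit form (\ref{EqDefM})), and the specular term $(1-\alpha(x)) \phi(t,x,\eta_x(v))$ is also non-negative since $1-\alpha(x) \geq 0$ and $\eta_x(v) \in \{v' \cdot n_x < 0\}$ whenever $v \cdot n_x > 0$.

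For the flux identity, I would split the integral on the left-hand side of (\ref{EqNormK}) according to the two terms in the definition of $K$. For the diffuse contribution, I factor out the inner integral (which does not depend on $v$) to obtain
\[
\alpha(x) \left( \int_{\{v \cdot n_x > 0\}} M(x,v) (v \cdot n_x) \, dv \right) \int_{\{v' \cdot n_x < 0\}} \gamma_+ f(t,x,v') |v' \cdot n_x| dv'.
\]
The bracketed factor equals $1$: indeed, $M(x,\cdot)$ is radial in $v$, so by the symmetry $v \mapsto -v$ it equals $\int_{\{v \cdot n_x < 0\}} M(x,v) |v \cdot n_x| dv$, which is $1$ by (\ref{Eq2tildec}). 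For the specular contribution, I perform the change of variables $w = \eta_x(v) = v - 2(v \cdot n_x)n_x$, which is an isometric involution on $\RR^n$ (so of Jacobian $1$) mapping $\{v \cdot n_x > 0\}$ bijectively onto $\{w \cdot n_x < 0\}$ and satisfying $v \cdot n_x = -w \cdot n_x = |w \cdot n_x|$. This transforms the specular part into
\[
(1-\alpha(x)) \int_{\{w \cdot n_x < 0\}} \gamma_+ f(t,x,w) |w \cdot n_x| dw.
\]

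Adding the two contributions, the prefactors $\alpha(x)$ and $1-\alpha(x)$ sum to $1$, leaving exactly the right-hand side of (\ref{EqNormK}). There is no genuine obstacle here; the only mildly delicate point is verifying the two elementary facts about $\eta_x$ (isometry, and exchange of incoming/outgoing half-spaces) and the normalization $\int_{\{v \cdot n_x > 0\}} M(x,v)(v \cdot n_x)\,dv = 1$, both of which follow from symmetry and the definition of $\tilde c(x)$.
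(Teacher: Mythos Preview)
Your proposal is correct and follows essentially the same approach as the paper: non-negativity is immediate from the definition, and the flux identity is obtained by splitting $K$ into its diffuse and specular parts, using the normalization (\ref{Eq2tildec}) for the former and the involutive change of variables $w=\eta_x(v)$ for the latter, then summing the $\alpha(x)$ and $1-\alpha(x)$ contributions. Your write-up is in fact slightly more explicit than the paper's about why the Maxwellian integral over $\{v\cdot n_x>0\}$ equals $1$ and about the properties of $\eta_x$, but the argument is the same.
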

\begin{proof} The non-negativity of $K$ is straightforward from (\ref{EqDefK}). Since, for all $x \in \pD$,
\[ \int_{\{v \cdot n_x > 0\}} M(x,v) |v \cdot n_x| dv = 1 \]
by (\ref{Eq2tildec}), we have, for all $t \geq 0$, recalling the notation $\eta_x$ from (\ref{EqEtax}),
\[ \begin{aligned}
\int_{\{v \cdot n_x > 0\}} K \gamma f(t,x,v) (v \cdot n_x) dv &=  \int_{\{v \cdot n_x > 0\}} \alpha(x) M(x,v) |v \cdot n_x| dv \\
&\quad \times \int_{\{v' \cdot n_x < 0\}} \gamma f(t,x,v') |v' \cdot n_x| dv' \\
& \quad + \int_{\{v \cdot n_x > 0\}} (1 - \alpha(x)) \gamma f(t,x,\eta_x(v)) |v \cdot n_x| dv,
\end{aligned} \] 
and, using the involutive change of variable $w = \eta_x(v)$ and that $w \cdot n_x = - v \cdot n_x$
\[ \begin{aligned}
\int_{\{v \cdot n_x > 0\}} K \gamma f(t,x,v) (v \cdot n_x) dv &= \alpha(x) \Big(\int_{\{v' \cdot n_x < 0\}} \gamma f(t,x,v') |v' \cdot n_x| dv' \Big) \\
&\qquad + (1 - \alpha(x)) \int_{\{v \cdot n_x < 0\}}  \gamma f(t,x,v) |v \cdot n_x| dv.
\end{aligned} \]
The result follows.
\end{proof}

As a consequence, $\|K\| = 1$ and (\ref{Problem1}) is well posed in the $L^1$ setting, see Arkeryd and Cercignani \cite[Theorem 3.6]{ArkerydCercignani1993}. Therefore we can associate to the equation a strongly continuous semigroup $(S_t)_{t \geq 0}$ of linear operators, such that for all $f_0 \in L^1(G)$, for all $t \geq 0$, $S_t f_0 = f(t,.)$ is the unique solution in $L^\infty([0,\infty); L^1(\bar{D} \times \RR^n))$ to (\ref{Problem1}) taken at time $t$. 
Decay properties of the equation will be studied at the level of this semigroup. 

\subsection{Positivity and mass conservation}

We gather in the next theorem several key properties of (\ref{Problem1}).
For $f \in L^1(G)$, recall the notation $\langle f \rangle$ from (\ref{EqDefNotationMass}).

\begin{thm}
\label{ThmPositivityL1contraction}
Let $f \in L^1(G)$. For all $t \geq 0$, $\langle S_t f \rangle = \langle f \rangle. $ Moreover, we have
\[ \|S_t f\|_{L^1} \leq \|f\|_{L^1}, \]
and if $f$ is non-negative, so is $S_t f$.
\end{thm}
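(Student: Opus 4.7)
The three claims are standard consequences of the structure of the boundary operator $K$, and my plan is to reduce them to the conservation property $(\ref{EqNormK})$ of the preceding lemma together with the non-negativity of $K$.

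First, I would prove mass conservation. Since $S_t f$ solves $(\ref{Problem1})$ and admits traces $\gamma_\pm S_t f$ in the sense of Arkeryd--Cercignani \cite{ArkerydCercignani1993}, Green's formula applied to the equation $\partial_t f + v \cdot \nabla_x f = 0$ on $(0,T) \times G$ yields, for every $T > 0$,
\[
\langle S_T f \rangle - \langle f \rangle = -\int_0^T \int_{\partial G} \gamma S_s f(x,v)\, (v \cdot n_x)\, d\zeta(x)\, dv\, ds,
\]
where the boundary integral splits into its $\partial_+ G$ and $\partial_- G$ pieces. Using the boundary condition $\gamma_- S_s f = K \gamma_+ S_s f$ and then $(\ref{EqNormK})$ pointwise in $(s,x)$, the two contributions cancel exactly; hence $\langle S_T f \rangle = \langle f \rangle$. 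This covers the first assertion.

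Next, I would address positivity. The most transparent route is via the mild (Duhamel) formulation along the backward characteristics. Assume $f \geq 0$ in $L^1(G)$. Writing $S_t f$ via the free-transport group until the first backward hitting of $\partial D$, one obtains the representation
\[
(S_t f)(x,v) = f(x-tv, v)\, \mathbf{1}_{\{t < \sigma(x,-v)\}} + (K \gamma_+ S_{t - \sigma(x,-v)} f)(x - \sigma(x,-v) v, v)\, \mathbf{1}_{\{t \geq \sigma(x,-v)\}},
\]
which can be iterated using the countably many rebound times. Since $K$ is non-negative by $(\ref{EqDefK})$ and each iteration only involves multiplication and integration of non-negative quantities, positivity is preserved at every step. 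By the uniqueness of the mild solution and its coincidence with $S_t f$, this gives $S_t f \geq 0$.

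Finally, for the $L^1$ contraction, I would use linearity of $(S_t)_{t \geq 0}$ to write $f = f_+ - f_-$ with $f_\pm \geq 0$, so that $S_t f = S_t f_+ - S_t f_-$. By the positivity step, both $S_t f_\pm \geq 0$, and by mass conservation applied to each,
\[
\|S_t f\|_{L^1} \leq \|S_t f_+\|_{L^1} + \|S_t f_-\|_{L^1} = \langle S_t f_+\rangle + \langle S_t f_-\rangle = \langle f_+\rangle + \langle f_-\rangle = \|f\|_{L^1}.
\]
The main technical obstacle is the rigorous justification of Green's formula and of the mild formulation in the $L^1$ framework with traces, but this is precisely the content of the well-posedness theory of \cite{ArkerydCercignani1993,Mischler99}, on which I rely.
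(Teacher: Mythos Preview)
Your argument is correct, and the mass conservation step is the same as the paper's. The remaining two claims, however, are handled in the opposite logical order and by different mechanisms.

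The paper first proves the $L^1$ contraction directly: it differentiates $\|S_t f\|_{L^1}$ using Green's formula and the renormalized equation $\partial_t |f| + v\cdot\nabla_x |f| = 0$, then controls the boundary term via the pointwise inequality $|K\gamma_+ f| \leq K|\gamma_+ f|$ together with $(\ref{EqNormK})$. Only afterwards does it deduce positivity, by the algebraic identity $(S_t f)_- = \tfrac12(|S_t f| - S_t f)$ combined with mass conservation and contraction, which forces $\|(S_t f)_-\|_{L^1} \leq \|f_-\|_{L^1} = 0$. You instead establish positivity first, through the iterated mild formulation along backward characteristics and the non-negativity of $K$, and then get contraction for free by splitting $f = f_+ - f_-$ and applying mass conservation to each part. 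Your route is more constructive and avoids the renormalized equation, at the cost of invoking the convergence of the iterated mild series; the paper's route is slicker on positivity but needs the trace theory for $|S_t f|$. Both are standard and both rely on \cite{ArkerydCercignani1993, Mischler99} for the underlying regularity.
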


\begin{proof}
We sketch the proof with the additional assumption that $f$ and the trace $\gamma f$ are sufficiently regular so that all the integrals are well-defined, and refer to \cite{ArkerydCercignani1993} for a rigorous derivation of the result. 

\textbf{Step 1.} We write $f(t,x,v)$ for $S_t f(x,v)$ for all $(t,x,v) \in [0,\infty) \times G$, $\gamma f$ for the corresponding trace on $(0, \infty) \times \pD \times \RR^n$. Using Green's formula, we have, for all $t \geq 0$,
\[ \begin{aligned}
\frac{d}{dt} \int_{G} f(t,x,v) dv dx = - \int_{G} v \cdot \nabla_x f(t,x,v) dv dx = \int_{\pD \times \RR^n} \gamma f(t,x,v) (v \cdot n_x) dv d\zeta(x),
\end{aligned} \]
recalling that $n_x$ is pointing towards $D$ for all $x \in \pD$.
Since $\gamma_- f = K \gamma_+ f$, we conclude by (\ref{EqNormK}) that
\[ \frac{d}{dt} \langle S_t f \rangle = 0. \]

\vspace{.5cm}

\textbf{Step 2.} To establish the contraction result, note first that, by triangle inequality, 
for almost all $t \geq 0$, $x \in \pD$, 
\[ \begin{aligned}
\int_{\{v \cdot n_x > 0\}} &|v \cdot n_x| |K\gamma_+ f|(t,x,v) dv \leq (1-\alpha(x)) \int_{\{v \cdot n_x > 0\}} |v \cdot n_x| |\gamma_+ f|(t,x,\eta_x(v)) dv \\
&\qquad + \alpha(x) \int_{\{v \cdot n_x > 0\}} |v \cdot n_x| M(x,v) \int_{\{v' \cdot n_x < 0\}} |\gamma_+ f|(t,x,v') |v' \cdot n_x| dv'.
\end{aligned} \]
We deduce that
\[ \int_{\{v \cdot n_x > 0\}} |v \cdot n_x| |K\gamma_+ f|(t,x,v) dv \leq \int_{\{v \cdot n_x > 0\}} |v \cdot n_x| K|\gamma_+ f|(t,x,v) dv, \]
and applying (\ref{EqNormK}) with $|\gamma_+ f|$, we conclude that 
\begin{equation}\begin{aligned}
\label{EqFinalStep2Contraction}
 \int_{\{v \cdot n_x > 0\}} |v \cdot n_x| |K\gamma_+ f|(t,x,v) dv \leq \int_{\{v \cdot n_x < 0\}} |v \cdot n_x| |\gamma_+ f|(t,x,v)  dv. 
 \end{aligned}\end{equation}
 
 \vspace{.5cm}
 
\textbf{Step 3.} With similar computations to those of Step 1, one obtains,
\[ \frac{d}{dt} \int_{G} |S_tf|  dv dx = \int_{\partial_+ G} |\gamma_+ f(t)| (v \cdot n_x) dv d\zeta(x) +  \int_{\partial_- G} |\gamma_- f(t)| (v \cdot n_x) dv d\zeta(x). \] According to the boundary condition in (\ref{Problem1}), we have $\gamma_- f(t,x,v) = K \gamma_+ f(t,x,v)$. We conclude that
\[ \frac{d}{dt}\|S_t f\|_{L^1} \leq 0, \]
 using (\ref{EqFinalStep2Contraction}).

\vspace{.5cm}

\textbf{Step 4.} \bb To prove the positivity property, we use the previous results and the fact that $(S_t f)_- = \frac{|S_t f| - S_t f}{2}$. Assuming $f \geq 0$, we have $f_- \equiv 0$ and, for all $t \geq 0$, since $\langle S_t f \rangle = \langle f \rangle$ and since $S_t$ is a contraction in $L^1$,
\[ \begin{aligned}
\|(S_t f)_- \|_{L^1} &= \int_{G} \frac{|S_t f| - S_t f}{2} dv dx \\
&= \frac{1}{2} \Big( \|S_t f\|_{L^1} - \langle S_t f \rangle \Big) 
\\
&\leq \frac12 \Big( \|f\|_{L^1} - \langle f \rangle \Big) = \int_G \frac{|f| - f}{2} dv dx = \|f_-\|_{L^1} = 0 
\end{aligned} \]
and since $(S_t f)_- \geq 0$ almost everywhere (a.e.) on $G$ we deduce that $(S_t f)_- = 0$ a.e. on $G$.
\end{proof}

%

%

\section{Subgeometric Lyapunov condition}
\label{SectionSubGeom}

In this section, we derive several subgeometric Lyapunov inequalities that will play a key role in our proof of Theorem \ref{ThmMain}. 
Recall the definition (\ref{NotatSigma}) of $\sigma$. We first introduce a notation.
\begin{notation}
We define the map $q$ from $\bar{D} \times \RR^n$ to $\pD$ by
\begin{equation}\begin{aligned}
\label{defq}
q(x,v) &:= x + \sigma(x,v)v,
\end{aligned}\end{equation}
for all $(x,v) \in \bar{D} \times \RR^n$.
\end{notation}
In terms of characteristics of the free transport equation, for $(x,v) \in \bar{D} \times \RR^n$, $q(x,v)$ corresponds to the right limit in $\bar{D}$ of the characteristic with origin $x$ directed by $v$. The real number $\sigma(x,v)$ corresponds to the time at which this characteristic reaches the boundary, if it started from $x$ at time $0$ with velocity $v$ with $x \in D$ or $x \in \pD, v \cdot n_x > 0$. If $x \in \pD$ and $v$ is not pointing towards the gas region (that is, $(x,v)$ is already the right limit of the corresponding characteristic), $q(x,v)$ simply denotes $x$. 

We recall a result on the derivative of $\sigma$ inside $G$ from Esposito, Guo, Kim and Marra \cite[Lemma 2.3]{Esposito2013}. We parametrize locally $D$ by a $C^1$ map $\xi: \RR^n \to \RR$, and $D$ is locally $\{x \in \RR^n, \xi(x) < 0\}$. By definition of $\sigma(x,v)$, for all $(x,v) \in G$, $\xi(x + \sigma(x,v)v) = 0$, and using the implicit function theorem, we find, for all $j \in \{1, \dots, n\}$,
\[ \begin{aligned}
\partial_j \xi + \sum_{i =1}^n \partial_i \xi \frac{\partial \sigma(x,v)}{\partial x_j} v_i = 0.
\end{aligned} \]
Rearranging the terms and by definition of $n_{q(x,v)}$, we have:
\[ \begin{aligned}
\frac{\partial \sigma(x,v)}{\partial x_j} = - \frac{(n_{q(x,v)})_j}{v \cdot n_{q(x,v)}},
\end{aligned} \]
so that  
\begin{equation}\begin{aligned}
\label{EqDerivativeSigma}
\nabla_x \sigma(x,v) = - \frac{n_{q(x,v)}}{v \cdot n_{q(x,v)}}, \quad v \cdot \nabla_x \sigma(x,v) = -1.
\end{aligned}\end{equation}
This minus sign can be understood in the following way: since $\sigma(x,v)$ is the time needed for a particle in position $x \in \bar{D}$ with velocity $v \in \RR^n$ at time $t = 0$ to hit the boundary, moving the particle from $x$ along the direction $v$ reduces this time. 
Recall from (\ref{EqDefc4}) that $c_4 \in (0,1)$ is such that $(1-c_4)^4 = (1-c_0)$. For all $(x,v) \in \bar{D} \times \RR^n$, we set \[\langle x,v \rangle = \Big(e^2 + \frac{d(D)}{\|v\| c_4} - \sigma(x,-v)\Big),\] so that $e^2 \leq \langle x,v \rangle$ and $\langle x,v \rangle \underset{v \to 0}{\sim} \frac{\kappa}{\|v\|}$ for some $\kappa > 0$. Moreover, for all $(x,v) \in \partial_+ G$, since $\sigma(x,-v) \leq \frac{d(D)}{\|v\|}$ by definition of $d(D)$, $c_4$ is chosen is such a way that we have for all $i \in [1,4]$,
\begin{equation}\begin{aligned}
\label{IneqPolynomSubgeom}
(1-c_0)^{\frac{1}{i}} \Big(e^2 + \frac{d(D)}{\|v\| c_4} \Big) \leq (1-c_4) \Big(e^2 + \frac{d(D)}{\|v\| c_4} \Big)
 &\leq e^2 + \frac{d(D)}{\|v\| c_4} - \sigma(x,-v). 
\end{aligned}\end{equation}
We prove the following:

\begin{lemma}
\label{LemmaLyapunov}
For a couple of weights $(m_1, m_0)$ with any of the choices
\begin{enumerate}[(1)]
\item $(m_1, m_0) = (\langle x, v\rangle^i \ln(\langle x, v \rangle)^{- 1-\epsilon},\langle x, v\rangle^{i-1} \ln(\langle x, v \rangle)^{- 1-\epsilon} ), \quad i \in \llbracket 2, n+1\rrbracket, \epsilon \in (0,3),$ 
\item $(m_1, m_0) = (\langle x,v \rangle^i, \langle x, v \rangle^{i-1})$, $\qquad \quad \hspace{3.95cm} i \in \{\frac32, 2, \frac52, \dots, \frac{2n+1}{2}\}$, 
\item $(m_1, m_0) = (\langle x,v \rangle \ln(\langle x,v \rangle)^{0.1}, \ln(\langle x,v \rangle)^{0.1})$,
\end{enumerate}
there exist $C > 0$, $b > 0$ explicit, depending on $(m_1,m_0)$, such that for all $T > 0$, all $f \in L^1_{m_1}(G)$,
 \begin{equation}\begin{aligned}
 \label{IneqLyapunovwithContraction}
 \|S_T f\|_{m_1} + C \int_0^T \|S_s f\|_{m_{0}} ds &\leq \|f\|_{m_1} + b(1+T)\|f\|_{L^1}. 
\end{aligned}\end{equation}
\end{lemma}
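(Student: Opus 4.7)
The plan is to compute $\frac{d}{dt}\|S_t f\|_{m_1}$ by Green's formula and then absorb the boundary contributions into the interior dissipation using the specular/diffuse splitting of $K$ together with the geometric inequality (\ref{IneqPolynomSubgeom}). First, the argument from Step 4 of Theorem \ref{ThmPositivityL1contraction}, combined with the subadditivity $|S_t f| \leq S_t |f|$, reduces matters to $f \geq 0$. Writing $f_t = S_t f$ and formally integrating by parts in $x$,
\begin{equation*}
\frac{d}{dt}\|f_t\|_{m_1} = \int_G f_t\,(v \cdot \nabla_x m_1)\,dx\,dv + \int_{\pD \times \RR^n} \gamma f_t\, m_1 (v \cdot n_x)\,d\zeta\,dv.
\end{equation*}
Each of the three weights reads $m_1 = F(\langle x,v\rangle)$, so (\ref{EqDerivativeSigma}) gives $v\cdot\nabla_x\sigma(x,-v)=1$ and hence $v \cdot \nabla_x m_1 = -F'(\langle x,v\rangle)$. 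In each of cases (1)--(3), a direct computation of $F'$ using only $\langle x,v\rangle \geq e^{2}$ (so $\ln\langle x,v\rangle\geq 2$) yields an explicit $c_1 > 0$ such that $F'(t) \geq c_1 G(t)$, where $G$ is the scalar function with $m_0 = G(\langle x,v\rangle)$; this gives the interior bound $\int_G f_t(v\cdot\nabla_x m_1)\,dx\,dv \leq -c_1\|f_t\|_{m_0}$.

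For the boundary, the identity $\sigma(x,-v) = 0$ on $\partial_- G$ forces $m_1(x,v) = F(F_{\max}(v))$ there, with $F_{\max}(v) := e^{2} + d(D)/(\|v\|c_4)$. Writing $\gamma_- f_t = K\gamma_+ f_t \geq 0$ and splitting $K$ into its specular and diffuse pieces, the specular part---after the involutive change of variable $w = \eta_x(v)$ using $\|w\|=\|v\|$ and $v \cdot n_x = -w \cdot n_x$---becomes $\int_{\partial_+ G}(1-\alpha(x))\gamma_+ f_t(x,w)F(F_{\max}(w))|w \cdot n_x|\,d\zeta\,dw$. The crucial observation is that (\ref{IneqPolynomSubgeom}) yields $\langle x,w\rangle \geq (1-c_4) F_{\max}(w)$ on $\partial_+ G$, and monotonicity of $F$ together with the comparison of logarithmic factors (which uses only $\langle x,v\rangle\geq e^{2}$) gives $F(F_{\max}(w)) \leq (1-c_4)^{-i}F(\langle x,w\rangle) \leq (1-c_0)^{-1} m_1(x,w)$ via (\ref{EqDefc4}) and $i \leq n+1 \leq 4$. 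Combined with $(1-\alpha(x)) \leq 1-c_0$, the specular piece is bounded by $\int_{\partial_+ G}|\gamma_+ f_t| m_1 |v \cdot n_x|\,d\zeta\,dv$, which exactly cancels the (negative) outgoing contribution from $\partial_+ G$. The remaining diffuse piece is controlled by $C_M \Phi_t^+$, where $\Phi_t^+ := \int_{\partial_+ G}|\gamma_+ f_t||v \cdot n_x|\,d\zeta\,dv$ and $C_M := \sup_{x \in \pD}\int_{v \cdot n_x > 0} M(x,v) F(F_{\max}(v))(v \cdot n_x)\,dv$ is finite owing to the Gaussian decay of $M$ at infinity and the choice of exponent and logarithmic correction in the weight near $v=0$---this is precisely the integrability constraint dictating the admissible weights in the statement.

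Integrating on $[0,T]$ then yields
\begin{equation*}
\|f_T\|_{m_1} + c_1\int_0^T \|f_s\|_{m_0}\,ds \leq \|f\|_{m_1} + C_M \int_0^T \Phi_s^+\,ds,
\end{equation*}
so the remaining task, and the main technical obstacle, is to prove $\int_0^T \Phi_s^+\,ds \leq B(1+T)\|f\|_{L^1}$ for some explicit $B > 0$. I intend to obtain this by testing the equation against a bounded auxiliary function---typically $\phi(x,v) := \sigma(x,-v)\|v\|$, which satisfies $\phi \leq d(D)$ on $\bar G$, vanishes on $\partial_- G$, and has $v \cdot \nabla_x \phi = \|v\|$ in $G$---or a smooth truncation thereof. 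The balance relies crucially on the hypothesis $\alpha \geq c_0 > 0$: at each boundary collision a fraction at least $c_0$ of the incoming mass is thermalised by the diffuse component, so the effective boundary-visiting rate along characteristics is controlled uniformly in the initial velocity, producing the sought linear-in-$T$ bound. Finally, the formal integration by parts involving the unbounded weight $m_1$ is justified by the standard truncation at height $R$, carrying out the bounded computation with $m_1 \wedge R$, and passing to the limit $R\to\infty$ by monotone convergence.
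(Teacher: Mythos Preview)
Your structure up through the reduction to the trace-flux estimate $\int_0^T \Phi_s^+\,ds \leq B(1+T)\|f\|_{L^1}$ is correct and matches the paper's argument closely: the interior bound $v\cdot\nabla_x m_1 \leq -C\, m_0$, the specular-reflection cancellation via (\ref{IneqPolynomSubgeom}) and (\ref{EqDefc4}), and the estimate of the diffuse piece by $C_M\Phi_t^+$ are all handled essentially the same way in the paper.

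The gap is in your control of $\int_0^T \Phi_s^+\,ds$: the proposed test function $\phi(x,v) = \sigma(x,-v)\|v\|$ does not work, for two independent reasons. First, although $\phi$ itself is bounded by $d(D)$, its transport derivative $v\cdot\nabla_x\phi = \|v\|$ is \emph{not} bounded, so the interior term $\int_0^T\!\int_G f_t\,\|v\|\,dx\,dv$ is not controlled by $(1+T)\|f\|_{L^1}$ without an extra velocity moment on $f$. Second, on $\partial_+ G$ the weight $\phi = \sigma(x,-v)\|v\|$ degenerates along near-grazing trajectories, so the boundary term you actually recover is $\int_{\partial_+ G}\gamma_+ f_t\,\sigma(x,-v)\|v\|\,|v\cdot n_x|$, which gives no lower bound on $\Phi_t^+$. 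Moreover, because your $\phi$ vanishes on $\partial_-G$, the boundary operator $K$---and hence the hypothesis $\alpha\geq c_0$---does not appear at all in the computation, contrary to your heuristic.

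The paper's device (Step~3 of the proof) is to multiply the equation by $(v\cdot n_x)$ restricted to $\{\|v\|\leq 1\}$. The interior term now involves $v\cdot\nabla_x(v\cdot n_x)$, which is bounded by $\|n_\cdot\|_{W^{1,\infty}}$; this is precisely where the $W^{1,\infty}$ extension of the normal map, assumed in the introduction, is used. The boundary integrand becomes $(v\cdot n_x)^2\,\gamma|f_t|\geq 0$; one drops the $\partial_+G$ contribution and on $\partial_-G$ invokes the diffuse lower bound $\gamma_-f_t\geq c_0\, M(x,v)\int_{\{v'\cdot n_x<0\}}\gamma_+f_t(x,v')\,|v'\cdot n_x|\,dv'$. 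Integrating $M(x,v)(v\cdot n_x)^2$ over $\{v\cdot n_x>0,\ \|v\|\leq 1\}$ then produces a factor $\Delta>0$ (continuity of $M$ and compactness of $\pD$) multiplying $\Phi_t^+$. This is exactly how $\alpha\geq c_0$ enters: it guarantees a uniform lower bound on the diffuse fraction, which converts the incoming flux on $\partial_-G$ back into the full outgoing flux $\Phi_t^+$, yielding (\ref{IneqFinalTrace}).
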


%

\begin{proof}
\textbf{Step 1.} Note that, for all $(x,v) \in G$, according to (\ref{EqDerivativeSigma}) and to the definition of $\langle x,v \rangle,$  $(v \cdot \nabla_x \langle x,v \rangle) = -1$. We treat case (1) first.
For $i \in \llbracket 2, n+1 \rrbracket$, $\epsilon \in (0,3)$,
\[ \begin{aligned}
 (v \cdot \nabla_x)m_1 &= (v \cdot \nabla_x) \big(\langle x,v \rangle^i \ln(\langle x,v \rangle)^{-(1+\epsilon)} \big) \\
 &= i (v \cdot \nabla_x \langle x,v \rangle) (\langle x,v \rangle)^{i-1} \ln(\langle x,v \rangle)^{-(1+\epsilon)} \\
 & \quad + (v \cdot \nabla_x \langle x,v \rangle) (-(1+\epsilon)) (\langle x,v \rangle)^{i-1} \ln(\langle x,v \rangle)^{-(2+\epsilon)} \\
 &= (\langle x,v \rangle)^{i-1} \ln(\langle x,v \rangle)^{-(1+\epsilon)} \Big( -i + \frac{(1+\epsilon)}{ \ln(\langle x,v \rangle)} \Big).
\end{aligned} \]
Finally, $\ln(\langle x,v \rangle) \geq \ln(e^2) = 2$, hence
\[ (v \cdot \nabla_x)m_1 \leq \Big(-i + \frac{1+\epsilon}{2}\Big) m_{0}, \]
and we set $C_i = i - \frac{1+\epsilon}{2} > 0$. 

\vspace{.5cm}

\textbf{Step 2.} Let $f \in L^1_{m_1}(G)$. We differentiate the $L^1_{m_{1}}(G)$ norm of $f$, and use Step 1. We first have, since $n_x$ is the unit normal vector pointing towards the gas region, for $T > 0$, by Green's formula,
\[ \frac{d}{dT} \int_{G} |S_T f|m_1 dv dx = \int_{G} |S_T f| (v \cdot \nabla_x m_1 ) dv dx + \int_{\pD \times \RR^n} (n_x \cdot v) m_1 (\gamma |S_T f|) dv d\zeta(x), \]
where we recall that $d\zeta$ denotes the induced volume form on $\pD$. We have, according to \cite[Corollary 1]{Mischler99}, 
\begin{equation}\begin{aligned}
\label{EqCorollMischler}
 |\gamma S_tf(x,v)| = \gamma |S_t f|(x,v) \quad \text{a.e. in } \big((0, \infty) \times \partial_+ G \big) \cup \big((0,\infty) \times \partial_- G\big),
 \end{aligned}\end{equation}
 hence we will not distinguish between both values in what follows.

Applying Step 1 we find, using the boundary condition and (\ref{EqCorollMischler}),
\begin{equation}\begin{aligned}
\label{IneqDiffMGamma0}
\frac{d}{dT} \int_{G} &|S_T f| m_1 dv dx \\
&\leq - C_i \int_{G} |S_T f| m_{0} dv dx + \int_{\pD \times \RR^n} \gamma |S_T f| m_{1} (v \cdot n_x) dv d\zeta(x)  \\
&\leq - C_i \int_{G} |S_T f| m_{0} dv dx   + \int_{\pD} \alpha(x) \int_{\{v \cdot n_x > 0\}} M(x,v) m_{1}(x,v) (v \cdot n_x) \\
&\quad \times  \Big( \int_{\{v' \cdot n_x < 0  \}} \gamma |S_Tf|(x,v') |v' \cdot n_x| dv' \Big) dv d\zeta(x)  \\
&\quad + \int_{\pD} (1-\alpha(x))\int_{\{v \cdot n_x > 0\}} m_{1}(x,v) (v \cdot n_x) \big(\gamma  |S_Tf|(x,\eta_x(v))\big) dv d\zeta(x)  \\
&\quad - \int_{\pD} \int_{\{v \cdot n_x < 0\}} m_{1}(x,v) |v \cdot n_x| \big(\gamma |S_Tf|(x,v) \big) dv d\zeta(x), 
\end{aligned}\end{equation}
with $\eta_x(v) = v - 2 (v \cdot n_x) n_x$ for all $(x,v) \in \pD \times \RR^n$.
We focus on the third and fourth terms of the last inequality of (\ref{IneqDiffMGamma0}).  We use in the third term, for $x \in \pD$ fixed, the involutive change of variable $w = \eta_x(v)$ in the integral in $v$, so that $v = \eta_x(w)$, $|w \cdot n_x| = |v \cdot n_x|$, $\|w\| = \|v\|$ and $w \cdot n_x < 0$ (since $v \cdot n_x > 0$).
Hence, for all $x \in \pD$,
\[ \int_{\{v \cdot n_x > 0\}} \hspace{-.3cm} m_1(x,v) (v \cdot n_x) \big( \gamma |S_T f|(x,\eta_x(v)) \big) dv = \int_{\{v \cdot n_x < 0\}}\hspace{-.3cm} m_1(x,\eta_x(v)) |v \cdot n_x| \big( \gamma |S_T f|(x,v) \big) dv. \]
For $(x,v) \in \partial_+ G$, $\sigma(x,-\eta_x(v)) = 0$,  therefore the sum of the third and fourth terms of (\ref{IneqDiffMGamma0}) reads
\[ \begin{aligned}
A:= &\int_{\pD} \int_{\{v: v \cdot n_x < 0\}} |v \cdot n_x|  (\gamma |S_T f |(x,v))  \Big \{ (1-\alpha(x)) \Big(e^2 + \frac{d(D)}{\|v\| c_4} \Big)^i \ln \Big(e^2 + \frac{d(D)}{\|v\| c_4} \Big)^{-(1+\epsilon)}  \\
&\quad - \Big(e^2 + \frac{d(D)}{\|v\|c_4} - \sigma(x,-v) \Big)^i \ln  \Big(e^2 + \frac{d(D)}{\|v\|c_4} - \sigma(x,-v) \Big)^{-(1+\epsilon)} \Big\}  dv d\zeta(x).
\end{aligned} \]
Note that, using $1 \leq i \leq 4$, we can control for all $(x,v) \in \partial_+ G$ the quantity $I(x,v)$ defined by
\[ \begin{aligned}
& I(x,v) := (1-\alpha(x)) \Big(e^2 + \frac{d(D)}{\|v\| c_4} \Big)^i \ln \Big(e^2 + \frac{d(D)}{\|v\| c_4} \Big)^{-(1+\epsilon)} \\
&\quad  - \Big(e^2 + \frac{d(D)}{\|v\|c_4} - \sigma(x,-v) \Big)^i \ln  \Big(e^2 + \frac{d(D)}{\|v\|c_4} - \sigma(x,-v) \Big)^{-(1+\epsilon)}.
\end{aligned} \]
Indeed, by definition of $c_4$ and since $\alpha(x) \geq c_0$ for all $x \in \pD$,
\[ \begin{aligned}
I(x,v)
 &\leq (1-c_4)^4 \Big(e^2 + \frac{d(D)}{\|v\| c_4} \Big)^i \ln \Big(e^2 + \frac{d(D)}{\|v\| c_4} \Big)^{-(1+\epsilon)} \\ 
 & \quad - \Big(e^2 + \frac{d(D)}{\|v\|c_4} - \sigma(x,-v) \Big)^i \ln  \Big(e^2 + \frac{d(D)}{\|v\|c_4} - \sigma(x,-v) \Big)^{-(1+\epsilon)}  \\
 &\leq \Big( (1-c_4) \Big(e^2 + \frac{d(D)}{\|v\| c_4} \Big) \Big)^i \ln \Big(e^2 + \frac{d(D)}{\|v\| c_4} \Big)^{-(1+\epsilon)}   \\
 &\quad - \Big(e^2 + \frac{d(D)}{\|v\|c_4} - \sigma(x,-v) \Big)^i \ln  \Big(e^2 + \frac{d(D)}{\|v\|c_4} - \sigma(x,-v) \Big)^{-(1+\epsilon)}. 
\end{aligned} \]
With the obvious bound $e^2 + \frac{d(D)}{\|v\| c_4} \geq e^2 + \frac{d(D)}{\|v\| c_4} - \sigma(x,-v)$ for all $(x,v) \in \partial_+ G$, we deduce easily,
\begin{equation}\begin{aligned}
\label{IneqLogSubgeom}
 \ln \Big(e^2 + \frac{d(D)}{\|v\| c_4} \Big)^{-(1+\epsilon)}  \leq \ln  \Big(e^2 + \frac{d(D)}{\|v\|c_4} - \sigma(x,-v) \Big)^{-(1+\epsilon)}. 
 \end{aligned}\end{equation}
 From  (\ref{IneqPolynomSubgeom}) and (\ref{IneqLogSubgeom}) we conclude that
 \[ I(x,v) \leq 0, \] for all $(x,v) \in \partial_+ G$, and finally that
\[ A = \int_{\partial_+ G} |v \cdot n_x| \big(\gamma| S_Tf|(x,v) \big) I(x,v) dv d\zeta(x) \leq 0. \]
Applying this result to (\ref{IneqDiffMGamma0}) we obtain
\begin{equation}\begin{aligned}
\label{IneqDiffLyapunov}
\frac{d}{dT} \int_{G} |S_T f| m_{1}  dv dx \leq - C_i \int_{G} &|S_T f| m_{0}  dv dx  
+ \int_{\partial_- G} \alpha(x)  M(x,v) m_{1}(x,v) (v \cdot n_x) \\
&\times  \Big( \int_{\{v' \cdot n_x < 0 \}  } \big(\gamma |S_Tf|(x,v') \big) |v' \cdot n_x| dv' \Big)  dv d\zeta(x). 
\end{aligned}\end{equation}

\vspace{.5cm}

\textbf{Step 3.} We focus on the second term on the right-hand side of (\ref{IneqDiffLyapunov}). We have, for all $T > 0$,
\begin{equation}\begin{aligned}
\label{EqRenormalSolution}
 \partial_t |f| + v \cdot \nabla_x |f| = 0, 
 \end{aligned}\end{equation} a.e. on $(0,T) \times D \times \RR^n$. Recall that $n_{\cdot}: x \to n_x$ is a $W^{1,\infty}$ map on $\bar{D}$. We multiply (\ref{EqRenormalSolution}) by $(v \cdot n_x)$ and integrate it over $(0,T) \times D \times \{v \in \RR^n, \|v\| \leq 1\}$ to obtain, using also Green's formula,
\[ \begin{aligned}
0 &= \int_0^T \int_D \int_{\{\|v\| \leq 1\}} \Big((\partial_t + v \cdot \nabla_x) |S_tf(x,v)| \Big) (v \cdot n_x) dv dx dt \\
&= \Big[ \int_{D \times  \{\|v\| \leq 1\}} |S_t f(x,v)| (v \cdot n_x) dv dx \Big]^T_0
\\
&\quad - \int_0^T \int_D \int_{\{\|v\| \leq 1\}} |S_tf(x,v)| \Big( v \cdot \nabla_x(v \cdot n_x) \Big) dv dx dt \\
& \quad - \int_0^T \int_{\{\|v\| \leq 1\}} \int_{\pD} \big(\gamma|S_t f|(x,v) \big) (v \cdot n_x)^2 d\zeta(x) dv dt,
\end{aligned} \]
where the minus sign in the last term comes from our definition of $n_x$ as a vector pointing towards the gas region.  Using the $L^1$ contraction from Theorem \ref{ThmPositivityL1contraction}, we deduce from the previous computation
\bb
\begin{equation}\begin{aligned}
\label{IneqLyapunovIntegralT}
\int_0^T \int_{\pD} \int_{\{v \cdot n_x > 0, \|v\| \leq 1\}}  \big(\gamma |S_Tf|(x,v) \big)& (v \cdot n_x)^2 dv d\zeta(x) dt \leq 2 \int_{G} |f(x,v)| dv dx  \\
&\quad + T \|n_{\cdot}\|_{W^{1, \infty}}  \int_G |f(x,v)| dv dx. 
\end{aligned}\end{equation}
\dd

\noindent As a consequence of the boundary condition, and since $\alpha \geq c_0$ on $\pD$, we obtain, 
\begin{equation}\begin{aligned}
\label{IneqTrace}
&c_0 \int_0^T \int_{\pD} \Big(\int_{\{v \cdot n_x > 0, \|v\| \leq 1\}} M(x,v) (v \cdot n_x)^2 dv \\
& \qquad\times \int_{\{v' \cdot n_x < 0\}} \big(\gamma |S_T f|(x,v') \big) |v' \cdot n_x| dv' \Big) d\zeta(x) dt  \leq (2 + T \|n_{\cdot}\|_{W^{1, \infty}}) \|f\|_{L^1}. 
\end{aligned}\end{equation}
Note that for a fixed $ x \in \pD$, $x \to \int_{\{v \cdot n_x > 0, \|v\| \leq 1\}}  M(x,v) (v \cdot n_x)^2 dv$ is continuous and positive since $x \to M(x,v)$ and $x \to n_x$ are continuous for all $v \in \RR^n$. Since $\pD$ is compact, writing $\Delta = c_0 \underset{x \in \pD}{\min} \int_{\{v \cdot n_x > 0, \|v\| \leq 1\}}  M(x,v) (v \cdot n_x)^2 dv > 0$, we deduce from (\ref{IneqTrace}) that
\begin{equation}\begin{aligned}
\label{IneqFinalTrace}
\Delta \int_0^T \int_{\partial_+ G} \big( \gamma| S_tf|(x,v) \big) |v \cdot n_x|  dv d\zeta(x) dt \leq \max(2, \|n_{\cdot}\|_{W^{1, \infty}}) (1 + T) \|f\|_{L^1}.
\end{aligned}\end{equation}

\vspace{.3cm}

\textbf{Step 4.} We use the previous steps to conclude the proof of case (1). We integrate (\ref{IneqDiffLyapunov}) over $(0,T)$. Using (\ref{IneqFinalTrace}) and $\alpha \leq 1$ on $\pD$, we obtain:
\[ \begin{aligned}
\|S_T &f\|_{m_{1}} + C_i \int_0^T \|S_s f\|_{m_{0}} ds \\
&\leq \|f\|_{m_{1}} + \int_0^T \int_{\pD} \Big(\int_{\{v \cdot n_x > 0\}} M(x,v) m_{1}(x,v) |v \cdot n_x| \Big) \\
& \qquad \qquad \times \Big( \int_{\{v' \cdot n_x < 0\}} \big(\gamma |S_s f|(x,v')\big) |v' \cdot n_x| dv' \Big) dv d\zeta(x) ds.
\end{aligned} \]
Note that, for $(x,v) \in \pD \times \RR^n$, $\sigma(x,-v) \leq \frac{d(D)}{\|v\|}$, so that
\begin{equation}\begin{aligned}
\label{Defam_1}
\int_{\{v \cdot n_x > 0\}} M(x,v) |v \cdot n_x| m_1(x,v) dv &\leq \int_{\{v \cdot n_x > 0\}} \Big(\underset{x \in \pD}{\max} M(x,v) \Big) \|v\| \Big(e^2 + \frac{d(D)}{\|v\| c_4}\Big)^{i}  \\
&\quad \times \ln \Big(e^2 + \frac{d(D)}{\|v\| c_4} - \frac{d(D)}{\|v\|} \Big)^{-(1+\epsilon)} dv := a_{1},
\end{aligned}\end{equation}
where $a_{1}$ is independent of $x$ and $f$ and finite by choice of $i, \epsilon$. Hence
\begin{equation}\begin{aligned}
\label{IneqGammaNormBeforeTrace}
\hspace{-.3cm} \|S_T f\|_{m_{1}} + C_i \int_0^T  \|S_s f\|_{m_{0}} ds \leq \|f\|_{m_{1}} + a_{1} \int_0^T \int_{\partial_+ G} \hspace{-.5cm} \big(\gamma |S_s f|(x,v') \big) |v' \cdot n_x| dv' d\zeta(x) ds.
\end{aligned}\end{equation}
To conclude, we plug (\ref{IneqFinalTrace}) into (\ref{IneqGammaNormBeforeTrace}) to find
\begin{equation}\begin{aligned}
\|S_T f\|_{m_{1}} + C_i \int_0^T \|S_s f\|_{m_{0}} ds \leq \|f\|_{m_{1}} + \frac{a_1}{\Delta} \max(2, \|n_{\cdot}\|_{W^{1, \infty}}) (1 + T) \|f\|_{L^1}.
\end{aligned}\end{equation}
 Setting $b =  \frac{a_1}{\Delta} \max(2, \|n_{\cdot}\|_{W^{1, \infty}}) $ terminates the proof of case (1). 

\vspace{.5cm}
%
\textbf{Step 5.}
In case (2), for all $(x,v) \in G,$ $i \in \{\frac32, 2, \dots, \frac{2n+1}{2}\}$, we have
\[(v \cdot \nabla_x) m_1 = v \cdot \nabla_x (\langle x,v \rangle^i) = -i \langle x,v \rangle^{i-1} = - i m_{0},\]
so that we can replicate the previous Steps 1 to 4 with the choice $C_i = i$ and a new value $a_1$ for (\ref{Defam_1}).

\vspace{.5cm}
\textbf{Step 6.}
For case (3), for $\alpha = 0.1$, for all $(x,v) \in G$, 
\[ \begin{aligned}
v \cdot \nabla_x (m_1(x,v)) &= - \ln(\langle x, v \rangle)^{\alpha} - \alpha \ln(\langle x, v \rangle)^{\alpha - 1} \\
 &= - \ln(\langle x, v \rangle)^{\alpha} (1 + \alpha \ln( \langle x, v \rangle)^{-1}) \\ &\leq - \ln(\langle x, v \rangle)^{\alpha} = -m_0(x,v),
\end{aligned} \]
so that again the previous proof can be replicated with the value $C = 1$ and a new value $a_1$ for (\ref{Defam_1}).
%
\end{proof}

\section{Doeblin-Harris condition}
\label{SectionDoeblin}
Recall that $D$ is a $C^2$ bounded domain. In this section,
we prove the Doeblin-Harris condition, Theorem \ref{ThmDoeblinHarris}. For any two points $x$ and $y$ at the boundary $\pD$ of $D$, we write
\[  ]x,y[ = \{tx + (1-t)y, t \in ]0,1[\}. \]
\begin{defi}
\label{DefiSee}
For $(x,y) \in (\pD)^2$, we write $x \leftrightarrow y$ and say that $x$ and $y$ see each other if $]x,y[ \subset D$, $n_x \cdot (y-x) > 0$, $n_y \cdot (x-y) > 0$.
\end{defi} 

Since $M$ is radial in the second variable, we write $M(x,r) = \frac{\tilde{c}(x)}{(2\pi \theta(x))^{\frac{n}{2}}} e^{-\frac{r^2}{2 \theta(x)}}$ for all $r \in \RR$, $x \in \pD$ see (\ref{EqDefM}) for the definition of $\tilde{c}$, so that $M(x,v) = M(x,\|v\|)$ for all vector $v \in \RR^n$. Possible ambiguity can always be solved by checking the living space of the variable considered.

\noindent We will crucially use this result on $C^1$ bounded domains from Evans:
\begin{prop}[Proposition 1.7 in \cite{evans2001}]
\label{PropEvans}
For all $C^1$ bounded domain $C$, there exist an integer $P$ and a finite set $\Delta' \subset \partial C$ for which the following holds: for all $z', z'' \in \partial C,$ there exist $z_0, \dots, z_P$ with $z' = z_0$, $z'' = z_P$, $\{z_1, \dots z_{P-1}\} \subset \Delta',$ and $z_k \leftrightarrow z_{k+1} $ for $0 \leq k \leq P-1$.
\end{prop}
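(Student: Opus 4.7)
The plan is to combine compactness of $\partial C$ with the almost-flat local geometry of a $C^1$ boundary. First, I would establish a \emph{local visibility lemma}: for every $z_0 \in \partial C$ there exists a radius $r(z_0) > 0$ such that any two distinct points $y_1, y_2 \in \partial C \cap B(z_0, r(z_0))$ satisfy $y_1 \leftrightarrow y_2$. In a neighbourhood of $z_0$, after a rotation, $C$ is the epigraph $\{x_n < \varphi(x')\}$ with $\varphi \in C^1$ and $\nabla \varphi(z_0') = 0$; continuity of $\nabla \varphi$ allows one to shrink $r(z_0)$ so that $|\nabla \varphi|$ is arbitrarily small on the ball. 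This forces the open chord between two graph points to lie strictly in $C$ (by a Taylor expansion of $\varphi$ along the chord) and the inward normals at $y_1, y_2$ to remain close to $n_{z_0}$, yielding both strict inequalities $n_{y_1} \cdot (y_2 - y_1) > 0$ and $n_{y_2} \cdot (y_1 - y_2) > 0$.

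Next, extract by compactness a finite subcover $\{B(z_i, r(z_i)/2)\}_{i=1}^N$ of $\partial C$ and set $\Delta' = \{z_1, \dots, z_N\}$. Every boundary point sees its covering centre. Define the \emph{visibility graph} $\mathcal{G}$ on $\Delta'$ by putting an edge between $z_i$ and $z_j$ whenever $z_i \leftrightarrow z_j$. Within a single connected component of $\partial C$, any two centres can be linked in $\mathcal{G}$: cover a continuous path between them by finitely many balls of the cover and observe that consecutive overlapping balls share a pair of centres lying in a common small ball, hence visible to each other by the local lemma.

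To bridge distinct connected components of $\partial C$, I would argue as follows: since $C$ is open, bounded and connected, for any two components $\Gamma, \Gamma'$ of $\partial C$ a minimiser $(a,b)$ of $\|p - q\|$ on $\Gamma \times \Gamma'$ satisfies $]a,b[ \subset C$ (otherwise the segment would hit $\partial C$ at an interior point and contradict minimality), and a first-order optimality condition forces $b - a$ to be parallel to $n_a$ and to $-n_b$ with the correct orientation, so that $a \leftrightarrow b$. Adjoining finitely many such bridging pairs to $\Delta'$ makes $\mathcal{G}$ connected. This is the step I expect to be the main obstacle, because one must ensure the strict sign conditions in the definition of $\leftrightarrow$ across topologically separated pieces of boundary, and handle potentially non-unique distance minimisers by a slight perturbation argument.

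Finally, $\mathcal{G}$ is a finite connected graph, hence has finite diameter $d$. Any pair $(z', z'')$ is chained as $z' \leftrightarrow z_{i_0} - \cdots - z_{i_k} \leftrightarrow z''$ with $k \le d$, giving a chain of length at most $d + 2$. To force a uniform length $P$ independent of $(z',z'')$, I would fix one edge $a \leftrightarrow b$ and one short cycle in $\mathcal{G}$ (of length $2$ and, after an easy geometric check, of length $3$), then pad shorter chains by oscillating along the edge and along the cycle; combining increments of $2$ and $3$ covers every integer $\ge d+2$, so $P$ can be chosen once and for all. This completes the construction of $(\Delta', P)$.
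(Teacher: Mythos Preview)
The paper does not prove this proposition; it is quoted verbatim from \cite{evans2001} and used as a black box, so there is no proof here to compare against. I therefore assess your sketch on its own merits.

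Your local visibility lemma is false, and this is where the argument breaks. Take $n=2$ and, near the origin, $C=\{x_2<\varphi(x_1)\}$ with $\varphi(x_1)=x_1^3$; this boundary is $C^\infty$ and $\nabla\varphi(0)=0$. For \emph{every} $\epsilon>0$ the boundary points $y_1=(\epsilon,\epsilon^3)$ and $y_2=(-\epsilon,-\epsilon^3)$ fail $y_1\leftrightarrow y_2$: parametrising the segment by $s\in(-1,1)$ as $(s\epsilon,s\epsilon^3)$, one has $s\epsilon^3>(s\epsilon)^3$ for all $s\in(0,1)$, so half of the open chord lies outside $\bar C$; one also checks $n_{y_1}\cdot(y_2-y_1)=-4\epsilon^3/\sqrt{1+9\epsilon^4}<0$. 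No smallness of $|\nabla\varphi|$ can rescue this: a first-order Taylor expansion of a $C^1$ function carries no information on whether the chord lies above or below the graph, which is a second-order question. The local input that actually holds (and that the paper later invokes via \cite[Lemma 2.3]{evans2001} and \cite[Lemma 38]{BernouFournierCollisionless}) is that $\leftrightarrow$ is an \emph{open} relation: if $z\leftrightarrow w$ then all $z'$ near $z$ and $w'$ near $w$ satisfy $z'\leftrightarrow w'$. Combined with the fact that every $z_0\in\partial C$ sees \emph{some} point of $\partial C$ (shoot along the inward normal and take the first exit), this yields the correct local building block for the covering argument you outline. A second, smaller issue: in your bridging step, a distance minimiser between two fixed components $\Gamma,\Gamma'$ may have its segment obstructed by a third component of $\partial C$, so you should minimise globally over all pairs lying in distinct components and iterate.
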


\noindent We now state the main result of this section. Recall that $(S_t)_{t \geq 0}$ is the semigroup associated to (\ref{Problem1}) as introduced in Section \ref{SectionSetting}.

%
%

\begin{thm}[Doeblin-Harris condition]
\label{ThmDoeblinHarris}
For any $R > 0$, there exist $T(R) > 0$ and a non-negative measure $\nu$ on $G$ with $\nu \not \equiv 0$ such that for all $(x,v)$ in $G$, for all $f_0 \in L^1(G), f_0 \geq 0$, 
\begin{equation}\begin{aligned}
\label{DoeblinHarris}
S_{T(R)}f_0(x,v) \geq \nu(x,v) \int_{B_R} f_0(y,w) dw dy,
\end{aligned}\end{equation}
with $B_R = \{(y,w) \in G: \sigma(y,w) \leq R \}$. Moreover there exists $\kappa > 0$ such that for all $R > 0$, $T(R) = \kappa R$.
\end{thm}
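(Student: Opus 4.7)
The strategy is to iterate the Duhamel representation of \eqref{Problem1} going backward from $(x, v, T)$. When $T \geq \sigma(x, -v)$, the backward characteristic first hits $\pD$ at $z_1 = q(x, -v)$, where the boundary condition $\gamma_- f = K \gamma_+ f$ applies; decomposing $K$ and discarding the specular piece (valid by positivity of the semigroup, Theorem \ref{ThmPositivityL1contraction}, and by $\alpha \geq c_0$), one gets
\[
S_T f_0(x,v) \;\geq\; c_0\, M(z_1, v) \int_{u_1 \cdot n_{z_1} < 0} \gamma_+ f_{T - \sigma(x, -v)}(z_1, u_1)\, |u_1 \cdot n_{z_1}|\, du_1.
\]
Each outgoing trace $\gamma_+ f_{t'}(z_j, u_j)$ is then propagated backward by free transport to the next boundary hit $z_{j+1} = z_j - \sigma(z_j, -u_j) u_j$, at which the diffuse-reflection step is applied again. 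After $N$ iterations this gives (with $u_0 := v$)
\[
S_T f_0(x,v) \;\geq\; c_0^N \int \cdots \int \prod_{j=1}^N M(z_j, u_{j-1})\, |u_j \cdot n_{z_j}|\, f_0(y, u_N)\, du_1 \cdots du_N,
\]
where $(y, u_N)$ is the endpoint of the last free-flight leg of length $T_N := T - \sigma(x, -v) - \sum_{j=1}^{N-1}\sigma(z_j, -u_j)$.

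The next step is to choose $N$ large enough so that the image of the map to $(y, w) := (y, u_N)$ covers $B_R$ for $(x, v)$ in some small open subset $U \subset G$. This is where Proposition \ref{PropEvans} enters: setting $N := P + 2$, any pair of boundary points can be joined through the finite set $\Delta' \subset \pD$ by a chain of at most $P$ mutually visible segments. I would fix a reference trajectory: a reference point $(\bar x, \bar v) \in G$ with $\|\bar v\|$ bounded and non-tangential to $\pD$ at $\bar z_1 := q(\bar x, -\bar v)$; a chain $\bar z_1 \leftrightarrow \bar z_2 \leftrightarrow \cdots \leftrightarrow \bar z_{P+1}$ with $\bar z_2, \ldots, \bar z_P \in \Delta'$; and an auxiliary $\bar z_{P+2}$ together with a reference direction $\bar u_{P+2}$ from which the final free-flight leg reaches a prescribed part of $B_R$. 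Since $M(z, \cdot)$ is continuous and strictly positive on compacts of $\{v : \|v\| > 0\}$, since the visibility relation $\leftrightarrow$ is an open condition, and since $\sigma, q$ are continuous on the relevant open subsets of $G$, a small open $U \ni (\bar x, \bar v)$ together with small balls around each $\bar u_j$ keep the integrand uniformly bounded below by some constant $\kappa_0 > 0$.

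Finally, I would perform the change of variables $(u_1, \ldots, u_N) \mapsto (y, w) = (z_N - T_N u_N, u_N)$, which decomposes as a composition of free-flight inverses whose Jacobians are uniformly bounded below on the tube of admissible trajectories (no tangencies, no vanishing flight times). Its image covers $B_R$ when the total time $T$ is large enough, producing the linear scaling $T(R) = \kappa R$: the initial ballistic leg from $(y, w) \in B_R$ contributes at most $R$, the $P$ intermediate legs through $\Delta'$ contribute a constant depending only on $D$ and $\Delta'$, and the final leg to $(x, v) \in U$ is bounded by $d(D)/\|v\|$, with $\|v\|$ kept above a threshold on $U$ (a threshold that must grow when $R$ is small). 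Setting $\nu(x, v) := \kappa_1 \mathbf{1}_U(x, v)$ for the resulting positive constant $\kappa_1$ then yields the announced Doeblin-Harris inequality, with $\nu$ non-trivial on $G$ and the inequality trivially satisfied outside $U$ by positivity of $S_T f_0$. The main technical difficulty is the simultaneous uniform positivity of every $M(z_j, u_{j-1})$, $|u_j \cdot n_{z_j}|$ and the Jacobian along the whole trajectory, which requires careful control of tangencies via the $C^2$ regularity of $\pD$ and the propagation of continuity across the $N$ reflections via the openness of $\leftrightarrow$.
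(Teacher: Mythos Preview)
Your backward iteration and use of Proposition~\ref{PropEvans} match the paper, but the time-budget argument has a genuine gap. You assert that the $P$ intermediate legs through $\Delta'$ contribute a time ``depending only on $D$ and $\Delta'$'', i.e.\ a constant $C$ independent of $R$; this forces $T(R)\geq C$ for every $R$ and is incompatible with $T(R)=\kappa R$ as $R\to 0$. Absorbing the $R$-dependence only into the first leg by letting $\|v\|$ grow on $U$ does not help: the intermediate times are still bounded below. Relatedly, restricting each $u_j$ to a small ball around a fixed reference $\bar u_j$ cannot produce a surjection onto all of $B_R$: the image of such a tube of trajectories is only a neighborhood of the reference endpoint, not the whole set $\{(y,w):\sigma(y,w)\leq R\}$. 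Finally, the map $(u_1,\dots,u_N)\mapsto(y,w)$ goes from $\RR^{nN}$ to $\RR^{2n}$ and is not a diffeomorphism, so a single Jacobian bound does not make sense as stated.

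The paper's remedy for all three points is a different parametrization: at each reflection it changes variables from the velocity $v_j$ to the pair $(y_j,\tau_j)\in\pD\times\RR_+$ with $y_j=q(y_{j-1},-v_j)$ and $\tau_j=\sigma(y_{j-1},-v_j)$, so that the geometric constraint $y_j\leftrightarrow y_{j-1}$ decouples from the time variable. One then restricts every $\tau_j$ to $[R,2R]$ and sets $T=(2P+2)R$; the intermediate speeds $\|y_j-y_{j-1}\|/\tau_j$ scale like $1/R$, but since $M>0$ everywhere the factor $\min_{x\in\pD,\,\tau\in[R,2R]}M(x,\|y_j-y_{j-1}\|/\tau)$ stays strictly positive. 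The last step is the genuine $n$-dimensional change $(y_P,\tau_P)\mapsto z=y_P-(t-\tau_0-\dots-\tau_P)v_P\in D$ at fixed $v_P$, and the remaining $(y_1,\dots,y_{P-1})$-integrals are packaged into a function $h_P(y_0,q(z,v_P))$ on $\pD\times\pD$, shown lower semicontinuous and strictly positive via Proposition~\ref{PropEvans} and the openness of $\leftrightarrow$, hence uniformly bounded below by compactness. The resulting measure is $\nu(x,v)=\delta\,M(q(x,-v),v)\,\mu(q(x,-v))\,\mathbf{1}_{\{\sigma(x,-v)\in[R,2R]\}}$, supported on an $R$-dependent slab rather than a fixed open set $U$.
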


\begin{proof}
We only treat the case $n = 3$, as the case of $n=2$ follows from similar (and easier) computations. For all $t \geq 0$, $(x,v) \in \bar{D} \times G$, we write $f(t,x,v) = S_t f_0(x,v)$. To lighten the notations, we write $f(t,x,v) = \gamma f(t,x,v)$ for all $(t,x,v) \in (0, \infty) \times \pD \times \RR^n$. Recall that this trace is well-defined, see Section \ref{SectionSetting}.

\vspace{.5cm}

\textbf{Step 1.} We let $(t,x,v) \in (0,\infty) \times G$ and compute a first inequality for $f(t,x,v)$. Recall the definition of $\sigma$, (\ref{NotatSigma}) and $q$, (\ref{defq}). From the characteristic method we have  
\[ f(t,x,v) = f_0(x - tv, v) \mathbf{1}_{\{t < \sigma(x,-v)\}} + f(t - \sigma(x,-v), q(x,-v),v) \mathbf{1}_{\{t \geq \sigma(x,-v)\}}. \]
Set $y_0 = q(x,-v)$, $\tau_0 = \sigma(x,-v)$. We have, using the boundary condition and the characteristics of the free-transport equation, along with the positivity of $f_0$, with $c_0$ given by (\ref{EqHypoAlpha}),
\[ \begin{aligned}
f(t,x,v) &\geq \mathbf{1}_{\{\tau_0 \leq t\}} f(t - \tau_0, y_0, v) \\
 &\geq \mathbf{1}_{\{\tau_0 \leq t\}} c_0 M(y_0,v) \int_{\{v_0 \in \RR^n, v_0 \cdot n_{y_0} < 0\}} f(t - \tau_0, y_0, v_0) |v_0 \cdot n_{y_0}| dv_0 \\
& \geq \mathbf{1}_{\{\tau_0 \leq t\}} c_0 M(y_0,v) \int_{\{v_0 \cdot n_{y_0} < 0\}} f(t - \tau_0 - \sigma(y_0, -v_0), q(y_0, -v_0), v_0) \\
&\quad \times \mathbf{1}_{\{\tau_0 + \sigma(y_0, -v_0) \leq t\}} |v_0 \cdot n_{y_0}| dv_0 \\
&\geq \mathbf{1}_{\{\tau_0 \leq t\}} c_0^2  M(y_0,v) \int_{\{v_0 \cdot n_{y_0} < 0\}} M(q(y_0,-v_0),v_0) \mathbf{1}_{\{\tau_0 + \sigma(y_0, -v_0) \leq t\}} |v_0 \cdot n_{y_0}| \\
& \quad \times \int_{\{v_1 \cdot n_{q(y_0, -v_0)} < 0\}} f(t - \tau_0 - \sigma(y_0, -v_0), q(y_0, -v_0), v_1) |v_1 \cdot n_{q(y_0, -v_0)}| dv_1 dv_0.
\end{aligned} \]
We write $v_0$ in spherical coordinates $(r, \phi, \theta) \in \RR_+ \times [-\pi, \pi] \times [0,\pi]$ in the space directed by the vector $n_{y_0}$. We write $u = u(\phi, \theta)$ for the unit vector corresponding to the direction of $v_0$. The condition $v_0 \cdot n_{y_0} < 0$ is equivalent to $\phi \in (-\frac{\pi}{2},\frac{\pi}{2})$ and we obtain, using also that $q(y_0,-v_0) = q(y_0,-u)$ as it is independent of $\|v_0\|$,
\[ \begin{aligned}
f(t,x&,v) \geq \mathbf{1}_{\{\tau_0 \leq t\}} c_0^2 M(y_0,v) \int_0^{\infty} \int_{-\pi/2}^{\pi/2} \int_0^{\pi} M(q(y_0,-u),r) \mathbf{1}_{\{\tau_0 + \frac{\sigma(y_0, -u)}{r} \leq t \} } |u \cdot n_{y_0}| \sin(\theta)  \\
& \quad \times r^3    \int_{\{v_1 \cdot n_{q(y_0, -u)} < 0\}} f(t - \tau_0 - \frac{\sigma(y_0, -u)}{r}, q(y_0, -u), v_1) |v_1 \cdot n_{q(y_0, -u)}| dv_1 d\theta d\phi  dr.
\end{aligned} \]
We now use the change of variable $(y_1, \tau_1) = (q(y_0,-u), \sigma(y_0,-ru))$. The inverse of the determinant of the Jacobian matrix was derived in Esposito et al. \cite[Lemma 2.3]{Esposito2013} and is given by (in the case where $y_1 \leftrightarrow y_0$)
\[ \begin{aligned}
\frac{\tau_1^3 r \sin(\theta) |\partial_3 \xi(y_1)|}{|n_{y_1} \cdot u| |\nabla_x \xi(y_1)|},
\end{aligned} \]
where $\xi$ is the $C^1$ function that locally parametrizes $D$, such that $D = \{y: \xi(y) < 0\}$, with the further assumption (which can be done without loss of generality) that $\partial_3 \xi(y_1) \ne 0$. Finally $u$ is the unit vector giving the direction going from $y_1$ to $y_0$, hence
\[ u = \frac{y_0 - y_1}{\|y_0 - y_1\|} \qquad \text{ and } \qquad 
 r = \frac{\|y_1 - y_0\|}{\tau_1}. \]
Setting, for $a \in \pD$, \[U_a := \{y \in \pD, y \leftrightarrow a\},\] we obtain from this change of variables
\[ \begin{aligned}
f(t,x,v) &\geq \mathbf{1}_{\{\tau_0 \leq t\}} c_0^2 M(y_0,v) \int_0^{t - \tau_0} \int_{U_{y_0}} M\Big(y_1, \frac{y_1 - y_0}{\tau_1}\Big) |u \cdot n_{y_0}| \frac{\|y_1 - y_0\|^2}{\tau_1^5}  \\
& \quad \times |u \cdot n_{y_1}|  \frac{|\nabla_x \xi(y_1)|}{|\partial_3 \xi(y_1)|} \int_{\{v_1 \cdot n_{y_1} < 0\}} f(t - \tau_0 - \tau_1, y_1, v_1) |v_1 \cdot n_{y_1}| dv_1 dy_1 d\tau_1 \\
&\geq \mathbf{1}_{\{\tau_0 \leq t\}} c_0^2 M(y_0,v) \int_0^{t - \tau_0} \int_{U_{y_0}} M\Big(y_1, \frac{y_1 - y_0}{\tau_1}\Big) |(y_1 - y_0) \cdot n_{y_0}| |(y_0 - y_1) \cdot n_{y_1}|   \\
& \quad \times \frac{1}{\tau_1^5} \frac{|\nabla_x \xi(y_1)|}{|\partial_3 \xi(y_1)|} \int_{\{v_1 \cdot n_{y_1} < 0\}} f(t - \tau_0 - \tau_1 - \sigma(y_1, -v_1), q(y_1, -v_1), v_1) |v_1 \cdot n_{y_1}|  \\
&\quad \times \mathbf{1}_{\{\sigma(y_1, -v_1) + \tau_1 + \tau_0 \leq t\}} dv_1 dy_1 d\tau_1. 
\end{aligned} \]
 Using again the boundary condition, we have:
\[ \begin{aligned}
f(t,x,v) &\geq \mathbf{1}_{\{\tau_0 \leq t\}} c_0^3 M(y_0,v) \int_0^{t - \tau_0} \int_{U_{y_0}} M\Big(y_1,\frac{y_1 - y_0}{\tau_1}\Big) |(y_1 - y_0) \cdot n_{y_0}| |(y_0 - y_1) \cdot n_{y_1}| \frac{1}{\tau_1^5} \\
& \quad \times \int_{\{v_1 \cdot n_{y_1} < 0\}} |v_1 \cdot n_{y_1}| \mathbf{1}_{\{\sigma(y_1, -v_1) + \tau_1 + \tau_0 \leq t\}} M(q(y_1,-v_1),v_1) \\
&\quad \times \Big( \int_{\{v_2 \cdot n_{q(y_1, -v_1)} < 0\}} f(t - \tau_0 - \tau_1 - \sigma(y_1, -v_1), q(y_1, -v_1), v_2) \\
&\quad \times |v_2 \cdot n_{q(y_1, -v_1)}| dv_2 \Big)  dv_1 d\zeta(y_1) d\tau_1,
\end{aligned} \]
\noindent with $d\zeta$ the surface measure of $\pD$, which is given by $d\zeta(y) = \frac{|\nabla_x \xi(y)|}{|\partial_3 \xi(y)|} dy$ for any $y \in \pD$.

\vspace{.5cm}

\textbf{Step 2.} We use the same method as in Step 1 $P-2$ times and make a change of variable to obtain a first integral over a subset of $D \times \RR^n$. 

Repeating the previous computation $P-2$ times, where $P \in \mathbb{Z}^+$ is given by Proposition \ref{PropEvans}, we obtain,
\[ \begin{aligned}
f(t,x,v) &\geq \mathbf{1}_{\{\tau_0 \leq t\}} c_0^{P+1} M(y_0,v) \int_0^{t - \tau_0} \int_{U_{y_0}} M\Big(y_1,\frac{y_1 - y_0}{\tau_1}\Big) |(y_1 - y_0) \cdot n_{y_0}|  \frac{1}{\tau_1^5} \\
& \quad \times |(y_0 - y_1) \cdot n_{y_1}| \int_0^{t - \tau_0 - \tau_1} \int_{U_{y_1}} M\Big(y_2,\frac{y_2 - y_1}{\tau_2}\Big) |(y_2 - y_1) \cdot n_{y_1}|  \frac{1}{\tau_2^5} \\
& \quad \times |(y_1 - y_2) \cdot n_{y_2}| \times \dots \\
& \quad \times \int_0^{t - \tau_0 - \dots - \tau_{P-1}} \int_{U_{y_{P-1}}} M\Big(y_P,\frac{y_P - y_{P-1}}{\tau_P}\Big) |(y_P - y_{P-1}) \cdot n_{y_{P-1}}|  \\
&\quad \times |(y_{P-1} - y_P) \cdot n_{y_P}| \frac{1}{\tau_P^5} \\
& \quad \times \int_{\{v_P \cdot n_{y_P} < 0\}} f(t - \tau_0 - \dots - \tau_P ,y_P, v_P) |v_P \cdot n_{y_P}|dv_P d\zeta(y_P) d \tau_P \dots d\zeta(y_1) d\tau_1.
\end{aligned} \]
We then use that, on $\{t \geq \tau_0 + \dots + \tau_P\}$,
\[f(t - \tau_0 - \dots - \tau_P, y_P, v_P) \geq f_0(y_P - (t - \tau_0 - \dots - \tau_P)v_P, v_P) \mathbf{1}_{\{t - \tau_0 - \dots - \tau_P - \sigma(y_P, -v_P) \leq 0\}}, \]
and obtain from the previous inequality,
\[ \begin{aligned} 
f(t,x,v) &\geq \mathbf{1}_{\{\tau_0 \leq t\}} c_0^{P+1} M(y_0,v) \int_0^{t - \tau_0} \int_{U_{y_0}} M\Big(y_1,\frac{y_1 - y_0}{\tau_1}\Big) |(y_1 - y_0) \cdot n_{y_0}|  \frac{1}{\tau_1^5} \\
& \quad \times |(y_0 - y_1) \cdot n_{y_1}| \int_0^{t - \tau_0 - \tau_1} \int_{U_{y_1}} M\Big(y_2,\frac{y_2 - y_1}{\tau_2}\Big) |(y_2 - y_1) \cdot n_{y_1}|  \frac{1}{\tau_2^5} \\
& \quad \times |(y_1 - y_2) \cdot n_{y_2}| \times \dots \\
& \quad \times \int_0^{t - \tau_0 - \dots - \tau_{P-1}} \int_{U_{y_{P-1}}} M\Big(y_P,\frac{y_P - y_{P-1}}{\tau_P}\Big) |(y_P - y_{P-1}) \cdot n_{y_{P-1}}|  \\
&\quad \times |(y_{P-1} - y_P) \cdot n_{y_P}| \frac{1}{\tau_P^5} \Big(\int_{\{v_{P} \cdot n_{y_{P}} < 0\}} f_0(y_P - (t - \tau_0 - \dots -\tau_P)v_P, v_P)   \\
& \quad \times |v_P \cdot n_{y_P}| \mathbf{1}_{\{\tau_0 + \dots + \tau_P + \sigma(y_P, -v_P) \geq t\}} dv_P \Big) d\zeta(y_P) d\tau_P \dots d\zeta(y_1) d\tau_1.
\end{aligned} \]
We set $z = \psi(y_P, \tau_P) = y_P - (t - \tau_0 - \dots - \tau_P )v_P$ (i.e. we compute the result of the change of variable from $(y_P, \tau_P)$ to $z$). 
The map $\psi$ is a $C^1$ diffeomorphism with 
\[ \begin{aligned}
\psi: &\{(y_P,\tau_P) \in \pD \times \RR_+: \sigma(y_P, -v_P) > t - \tau_0 - \dots - \tau_P, y_P \leftrightarrow y_{P-1}\}\\
& \quad  \to \{z \in D: q(z,v_P) \leftrightarrow y_{P-1}, \sigma(z,v_P) + \tau_0 + \dots + \tau_{P-1} \leq t\}. 
\end{aligned} \] With this change of variable, $y_P = q(z,v_P)$. Moreover, $t - \tau_0 - \dots - \tau_P = \sigma(z,v_P)$ by definition of $z$, so that
\[ \tau_P = t - \tau_0 - \dots - \tau_{P-1} - \sigma(z, v_P).\] The inverse of the Jacobian is $|v_P \cdot n_{y_P}|$, see Esposito et al. \cite[Lemma 2.3]{Esposito2013}. Therefore,
\[ \begin{aligned}
f(t,x,v) &\geq \mathbf{1}_{\{\tau_0 \leq t\}} c_0^{P+1} M(y_0,v) \int_0^{t - \tau_0} \int_{U_{y_0}} M\Big(y_1,\frac{y_1 - y_0}{\tau_1}\Big) |(y_1 - y_0) \cdot n_{y_0}|  \frac{1}{\tau_1^5} \\
& \quad \times |(y_0 - y_1) \cdot n_{y_1}| \int_0^{t - \tau_0 - \tau_1} \int_{U_{y_1}} M\Big(y_2,\frac{y_2 - y_1}{\tau_2}\Big) |(y_2 - y_1) \cdot n_{y_1}|  \frac{1}{\tau_2^5} \\
& \quad \times |(y_1 - y_2) \cdot n_{y_2}| \times \dots \\
& \quad \times \int_0^{t - \tau_0 - \dots - \tau_{P-2}} \int_{U_{y_{P-2}}} M\Big(y_{P-1},\frac{y_{P-1} - y_{P-2}}{\tau_{P-1}}\Big) |(y_{P-1} -y_{P-2}) \cdot n_{y_{P-2}}|  \\
& \quad \times  |(y_{P-2} - y_{P-1}) \cdot n_{y_{P-1}}|  \frac{1}{\tau_{P-1}^5} \Big(\int_{ D \times \RR^n} \frac{|(y_{P-1} - q(z,v_P)) \cdot n_{q(z,v_P)}|  }{(t - \tau_0 - \dots - \tau_{P-1} - \sigma(z,v_P))^5}    \\
&\quad  \times M\Big(q(z,v_P),\frac{y_{P-1} - q(z,v_P)}{t - \tau_0 - \dots - \tau_{P-1} - \sigma(z,v_P)}\Big)  \\
&\quad \times |(q(z,v_P) - y_{P-1}) \cdot n_{y_{P-1}}| \mathbf{1}_{\{q(z,v_P) \leftrightarrow y_{P-1}\}} \mathbf{1}_{\{\sigma(z,v_P) + \tau_{P-1} + \dots + \tau_0 \leq t\}} \\
&\quad \times f_0(z, v_P) dv_P dz \Big) d\zeta(y_{P-1}) d\tau_{P-1} \dots d\zeta(y_1) d\tau_1.
\end{aligned} \]
Using Tonelli's theorem, we then have
\begin{equation}\begin{aligned}
\label{EqTmpDoeblin}
f(t,x,v) & \geq \mathbf{1}_{\{\tau_0 \leq t\}} c_0^{P+1} M(y_0,v) \int_{ D \times \RR^n} f_0(z, v_P)  \\
& \quad \times \int_0^{t - \tau_0} \int_{U_{y_0}} M\Big(y_1,\frac{y_1 - y_0}{\tau_1}\Big) |(y_1 - y_0) \cdot n_{y_0}|  \frac{1}{\tau_1^5}  \\
& \quad \times |(y_0 - y_1) \cdot n_{y_1}| \int_0^{t - \tau_0 - \tau_1} \int_{U_{y_1}} M\Big(y_2,\frac{y_2 - y_1}{\tau_2}\Big) |(y_2 - y_1) \cdot n_{y_1}|  \frac{1}{\tau_2^5}  \\
& \quad \times |(y_1 - y_2) \cdot n_{y_2}| \times \dots  \\
& \quad \times \int_0^{t - \tau_0 - \dots - \tau_{P-2}} \int_{U_{y_{P-2}}} M\Big(y_{P-1},\frac{y_{P-1} - y_{P-2}}{\tau_{P-1}}\Big) |(y_{P-1} -y_{P-2}) \cdot n_{y_{P-2}}|  \\
& \quad \times  |(y_{P-2} - y_{P-1}) \cdot n_{y_{P-1}}|  \frac{1}{\tau_{P-1}^5} \frac{|(y_{P-1} - q(z,v_P)) \cdot n_{q(z,v_P)}|  }{(t - \tau_0 - \dots - \tau_{P-1} - \sigma(z,v_P))^5}    \\
&\quad  \times M\Big(q(z,v_P),\frac{y_{P-1} - q(z,v_P)}{t - \tau_0 - \dots - \tau_{P-1} - \sigma(z,v_P)}\Big)    \\
&\quad \times |(q(z,v_P) - y_{P-1}) \cdot n_{y_{P-1}}| \mathbf{1}_{\{q(z,v_P) \leftrightarrow y_{P-1}\}} \mathbf{1}_{\{\sigma(z,v_P) + \tau_{P-1} + \dots + \tau_0 \leq t\}}   \\
&\quad \times d\zeta(y_{P-1}) d\tau_{P-1} \dots d\zeta(y_1) d\tau_1  dv_P dz , 
\end{aligned}\end{equation}

\vspace{.5cm}

\textbf{Step 3.}
We choose the value of $t$ and control all the time integrals in (\ref{EqTmpDoeblin}).

 Let $R > 0$ and set
$t = (2P + 2)R$, $\tau_0 \in (R, 2R)$, i.e., for all $(x,v) \in G$ such that $\sigma(x,-v) \not \in (R, 2R)$, we simply set $\nu(x,v) = 0$. Note that for any $R > 0$, one can find a couple $(x,v) \in G$ such that $\sigma(x,-v) \in (R,2R)$. For all $i \in \{1, \dots, P-1\}$, we lower bound the integral with respect to $\tau_i$  by the integral over $(R, 2R)$. We also lower bound the integral with respect to $(z,v_P)$ by an integral over $B_R = \{(z,v_P) \in G: \sigma(z, v_P) \leq R\}$. 
For $\tau_0, \dots, \tau_{P-1} \in (R, 2R)$, $\sigma(z,v_P) \leq R$ and $t = (2P+2)R$, we have first
\[ (2P+2)R - 2P R - R = R \leq t - \tau_0 - \tau_1 - \dots - \tau_{P-1} - \sigma(z,v_P) \leq (2P+2)R - PR = (P+2)R, \]
and thus, with those choices,
\[ \mathbf{1}_{\{\tau_0 + \dots \tau_{P-1} + \sigma(z,v_P) \leq t\}} = 1. \]
Moreover, recalling that for all $i \in \{1,\dots,P-1\}$, the integration interval for $\tau_i$ in the equation (\ref{EqTmpDoeblin}) is $[0, t - \tau_0 - \tau_1 - \dots - \tau_{i-1}]$, and since
\[t - \tau_0 - \tau_1 - \dots - \tau_{i-1} \geq (2P+2)R - 2iR = 2R + 2(P-i)R \geq 2R, \]
the lower bound detailed above using an integral over $[R, 2R]$ for $\tau_i$ is legitimate. We set for all $a > 0$,
\[\underline{M}(a) = \min_{x \in \pD, \tau \in [R, 2R]} M\Big(x,\frac{a}{\tau}\Big) > 0  \quad \text{and} \quad  
 \underline{\underline{M}}(a) = \min_{x \in \pD, \tau \in [R, (P+2)R]} M\Big(x,\frac{a}{\tau}\Big) > 0, \]
 where the positivity is obtained by continuity of $M$ and compactness.
Applying those lower bounds, we obtain from (\ref{EqTmpDoeblin})
\[ \begin{aligned}
f(t,x,v) & \geq \mathbf{1}_{\{\tau_0 \in [R,2R]\}} c_0^{P+1} M(y_0,v) \int_{B_R} f_0(z, v_P)  \\
& \quad \times \int_R^{2R} \int_{U_{y_0}} \underline{M}(\|y_1 - y_0\|) |(y_1 - y_0) \cdot n_{y_0}|  \frac{1}{\tau_1^5} |(y_0 - y_1) \cdot n_{y_1}|  \\
& \quad \times  \int_R^{2R} \int_{U_{y_1}} \underline{M}(\|y_2 - y_1\|) |(y_2 - y_1) \cdot n_{y_1}|  \frac{1}{\tau_2^5} \times |(y_1 - y_2) \cdot n_{y_2}| \times \dots  \\
& \quad \times \int_R^{2R} \int_{U_{y_{P-2}}} \underline{M}(\|y_{P-1} - y_{P-2}\|) |(y_{P-1} -y_{P-2}) \cdot n_{y_{P-2}}| \\
& \quad \times  |(y_{P-2} - y_{P-1}) \cdot n_{y_{P-1}}|  \frac{1}{\tau_{P-1}^5} \frac{|(y_{P-1} - q(z,v_P)) \cdot n_{q(z,v_P)}|  }{((P+2)R)^5}  \\
&\quad  \times \underline{\underline{M}}(\|y_{P-1} - q(z,v_P)\|) |(q(z,v_P) - y_{P-1}) \cdot n_{y_{P-1}}|   \\
&\quad \times  \mathbf{1}_{\{q(z,v_P) \leftrightarrow y_{P-1}\}}  d\zeta(y_{P-1}) d\tau_{P-1} \dots d\zeta(y_1) d\tau_1  dv_P dz.
\end{aligned} \]

Since, $\int_R^{2R} \frac{1}{t^5} dt < \infty$, one finds from (\ref{EqTmpDoeblin}), with $\delta > 0$ explicit, depending on $R$,
\begin{equation}
\label{EqTmpDoeblinStep3}
\begin{aligned}
f(t,&x,v)  \geq \mathbf{1}_{\{\tau_0 \in [R,2R]\}} \delta M(y_0,v) \int_{B_R} f_0(z, v_P)\\
& \quad \times \int_{U_{y_0}} \underline{M}(\|y_1 - y_0\|) |(y_1 -y_0) \cdot n_{y_0}| |(y_0-y_1) \cdot n_{y_1}|  \\
& \quad \times \int_{U_{y_1}} \underline{M}(\|y_2 - y_1\|)|(y_2 - y_1) \cdot n_{y_1}| |(y_1 - y_2) \cdot n_{y_2}| \times \dots   \\
& \quad \times \int_{U_{y_{P-2}}} \underline{M}(\|y_{P-1} - y_{P-2}\|) |(y_{P-1} - y_{P-2}) \cdot n_{y_{P-2}}| |(y_{P-2} - y_{P-1}) \cdot n_{y_{P-1}}|   \\
& \quad \times  |(y_{P-1} - q(z,v_P)) \cdot n_{q(z,v_P)}| |(q(z,v_P) - y_{P-1}) \cdot n_{y_{P-1}}|  \\
&\quad \times  \underline{\underline{M}}(\|y_{P-1} - q(z,v_P)\|) \mathbf{1}_{\{q(z,v_P) \leftrightarrow y_{P-1}\}} d\zeta(y_{P-1}) \dots d\zeta(y_1) dv_P dz. 
\end{aligned}
\end{equation}

\vspace{.5cm}

\textbf{Step 4.} For a couple of points $(a,b) \in (\pD)^2$, we set
\[ \begin{aligned}
h_P(a,b) &= \int_{U_a} \underline{M}(\|y_1 - a\|) |(y_1 - a) \cdot n_{a}| |(a - y_1) \cdot n_{y_1}| \\
& \quad \times \int_{U_{y_1}} \underline{M}(\|y_2 - y_1\|)|(y_2 - y_1) \cdot n_{y_1}| |(y_1 - y_2) \cdot n_{y_2}|  \times \dots  \\
& \quad \times \int_{U_{y_{P-2}}} \underline{M}(\|y_{P-1} - y_{P-2}\|) |(y_{P-1} - y_{P-2}) \cdot n_{y_{P-2}}| |(y_{P-2} - y_{P-1}) \cdot n_{y_{P-1}}|  \\
&\quad \times  |(y_{P-1} -b) \cdot n_{b}| |(b - y_{P-1}) \cdot n_{y_{P-1}}| \underline{\underline{M}}(\|y_{P-1} - b\|) \\
&\quad \times \mathbf{1}_{\{b \leftrightarrow y_{P-1}\}} d\zeta(y_{P-1}) \dots d\zeta(y_1).
\end{aligned} \]
In this step, we want to show that, for all $y_0 \in \pD$, $b \to h_P(y_0, b)$ is lower semicontinuous and positive. 
We can rewrite $h_P$ as
\[ \begin{aligned}
h_P(a,b) = \int_{\{(y_1, \dots, y_{P-1}) \in \bar{D}(a,b)\}} N(a, y_1, \dots, y_{P-1}, b) d\zeta(y_1) \dots d\zeta(y_{P-1}),
\end{aligned} \]
with
\[ \bar{D}(a,b) = \{(y_1, \dots, y_{P-1}) \in (\pD)^{P-1}: y_1 \leftrightarrow a, y_2 \leftrightarrow y_1, \dots ,y_{P-1} \leftrightarrow y_{P-2}, b \leftrightarrow y_{P-1}\}, \]
and
\[ \begin{aligned}
N(a,y_1, \dots y_{P-1},b) &= \underline{M}(\|y_1 - a\|) |(y_1 - a) \cdot n_{a}| |(a - y_1) \cdot n_{y_1}| \\
& \quad \times  \underline{M}(\|y_2 - y_1\|)|(y_2 - y_1) \cdot n_{y_1}| |(y_1 - y_2) \cdot n_{y_2}| \times \dots \\
&\quad \times \underline{M}(\|y_{P-1} - y_{P-2}\|) |(y_{P-1} - y_{P-2}) \cdot n_{y_{P-2}}| |(y_{P-2} - y_{P-1}) \cdot n_{y_{P-1}}| \\
& \quad \times   |(y_{P-1}-b) \cdot n_{b}| |(b - y_{P-1}) \cdot n_{y_{P-1}}| \underline{\underline{M}}(\|y_{P-1} - b\|).
\end{aligned} \] 
By regularity assumption, if $(z_1,z_2) \in (\pD)^2$ with $z_1 \leftrightarrow z_2$, there exists $\epsilon > 0$ such that $B(z_1, \epsilon) \cap \pD \leftrightarrow B(z_2, \epsilon) \cap \pD$, i.e. for all $p \in B(z_1,\epsilon) \cap \pD$, all $q \in B(z_2,\epsilon) \cap \pD$, we have $p \leftrightarrow q$, see \cite[Lemma 38]{BernouFournierCollisionless}. Combining this with the statement of Proposition \ref{PropEvans}, we find that 
\begin{equation}\begin{aligned}
\label{PositiveHausdorffDoeblin}
\mathcal{H}(\bar{D}(a,b)) > 0,
\end{aligned}\end{equation} where we recall that $\mathcal{H}$ denotes the $n-1$ dimensional Hausdorff measure.

We set, for all $a \in \pD$, \[ \bar{D}(a) = \{(y_1, \dots, y_{P-1}) \in (\pD)^{P-1}: y_1 \leftrightarrow a, y_2 \leftrightarrow y_1, \dots ,y_{P-1} \leftrightarrow y_{P-2}\}. \]
For $a \in \pD$ and $(y_1, \dots y_{P-1}) \in \bar{D}(a)$, for all $b \in \pD$ such that $b \leftrightarrow y_{P-1}$, we have $N(a,y_1, \dots, y_{P-1},b) > 0$ according to Definition \ref{DefiSee}. Using (\ref{PositiveHausdorffDoeblin}), one concludes that for all $(a,b) \in (\pD)^2$, $h_P(a,b) > 0$. Moreover, the map $b \to  N(a,y_1, \dots, y_{P-1},b)$ is continuous according to the definition of $\underline{\underline{M}}$ through $M$ and since $z \to n_z$ is continuous.

Note that, according to \cite[Lemma 2.3]{evans2001}, for any $z \in \pD$, the set $U_z$ is open and non-empty. Hence for all $y_{P-1} \in \pD$, $b \to \mathbf{1}_{U_{y_{P-1}}}(b)$ is lower semicontinuous. 
 We conclude that for all $a \in \pD$, $(y_1,\dots, y_{P-1}) \in \bar{D}(a)$,
$b \to N(a,y_1, \dots, y_{P-1},b) \mathbf{1}_{\{y_{P-1} \leftrightarrow b\}}$ is lower semicontinuous.  For $a \in \pD$, $(b_n)_{n \geq 0}$ a sequence of $\pD$ converging towards $b \in \pD$, we obtain
\[ \begin{aligned}
0 < h_P(a,b) &\leq \int_{\bar{D}(a)}  \liminf \limits_{n \to \infty} N(a, y_1, \dots, y_{P-1}, b_n) \mathbf{1}_{\{y_{P-1} \leftrightarrow b_n\}} d\zeta(y_1) \dots d\zeta(y_{P-1}) \\ 
&\leq \liminf \limits_{n \to \infty} h_P(a,b_n), 
\end{aligned} \] 
using Fatou's lemma. Hence $ \pD \ni b \to h_P(a,b)$ is also lower semicontinuous and positive for all $a \in \pD$. 

\vspace{.5cm}

\textbf{Step 5.} We conclude the proof using Step 4. Since $\pD$ is compact, we deduce from the previous step that for all $a \in \pD$,
\[ \mu(a) := \inf_{b \in \pD} h_P(a,b) > 0. \]

With this at hand, we have
\[ \begin{aligned}
f(t,x,v) & \geq \mathbf{1}_{\{\tau_0 \in [R,2R]\}} \delta M(y_0,v) \int_{(z,v_P) \in B_R} f_0(z, v_P) h_P(y_0, q(z, v_P))  dv_P dz \\
&\geq \mathbf{1}_{\{\tau_0 \in [R, 2R]\}} \delta M(y_0,v) \mu(y_0) \int_{B_R} f_0(z,w) dw dz,
\end{aligned} \]
and, recalling that $\tau_0 = \sigma(x,-v)$, $y_0 = q(x,-v)$, we set \[ \nu(x,v) = \delta M(q(x,-v),v) \mu(q(x,-v)) \mathbf{1}_{\{\sigma(x,-v) \in [R,2R]\}},\] and $T(R) = t = (2P + 2)R$ to complete the proof.
\end{proof}

\begin{rmk}
\label{RmkExplicitNu}
Although we use a compactness argument to derive $\mu$, for a given domain $D$, we believe that one may find an explicit lower bound for $h_P$ defined in Step 4 of the previous proof using the geometry of $D$. Note however that this computation might be very difficult. With such constructive lower bound, the constants in Theorem \ref{ThmMain} and Corollary \ref{MainCorol} become explicit. 

As an example of an easy case where an explicit lower bound on $h_P$ can be find, assume that $n = 2$ and $D$ is the unit disk, so that $U_z = \pD \setminus\{z\}$ for all $z \in \pD$. We can clearly take $P = 2$ in Proposition \ref{PropEvans} and we have, for all $(a,b) \in (\pD)^2$,
\[ \begin{aligned}
h_P(a,b) &= \int_{ \pD} \hspace{-.1cm} \underline{M}(\|y-a\|) |(y-a)\cdot n_a| |(a-y) \cdot n_y| \underline{\underline{M}}(\|y-b\|) |(y-b) \cdot n_b| |(b-y) \cdot n_y| dy \\
&\geq \int_{H_{a,b}} \hspace{-.3cm} \underline{M}(\|y-a\|) |(y-a)\cdot n_a| |(a-y) \cdot n_y| \underline{\underline{M}}(\|y-b\|) |(y-b) \cdot n_b| |(b-y) \cdot n_y| dy \\
&\geq \kappa,
\end{aligned} \]  
where \bb 
\[ H_{a,b} = \Big\{y \in \pD, y \cdot a  \wedge y \cdot b \geq \frac{\sqrt{2}}{2} \Big \}, \]
is a set whose Hausdorff measure in $\pD$ is uniformly bounded from below by $\frac1{2}$, and such that for all $y \in H(a,b)$, $d(D) \geq \|y-a\|, \|y-b\| \geq \sqrt{2+\sqrt{2}}$  so that $\kappa$ is a positive constant independent of $a$ and $b$. 
\end{rmk}
\color{black}

Recall that $\langle x, v \rangle = (e^2 + \frac{d(D)}{\|v\| c_4} - \sigma(x,-v))$ for all $(x,v) \in \bar{D} \times \RR^n$, with $c_4 < 1$. We conclude this section by stating a similar result for the level sets of $\langle.,. \rangle$. 
\begin{coroll}
\label{CorolDoeblin}
There exists $R_0 > 0$ such that for any $R \geq R_0$, for $T(R) > 0$ and $\nu$ non-negative measure on $G$ given by Theorem \ref{ThmDoeblinHarris}, for all $(x,v)$ in $G$, for all $f_0 \in L^1(G), f_0 \geq 0$, we have
\begin{equation}\begin{aligned}
\label{DoeblinHarrisWeight}
S_{T(R)} f_0(x,v) \geq \nu(x,v) \int_{\Gamma_R} f_0(y,w) dw dy ,
\end{aligned}\end{equation}
with $\Gamma_R = \{(y,w) \in G, \langle y,w \rangle \leq R \}$. Moreover there exists $\xi > 0$ such that for all $R \geq R_0$, $T(R) = \xi R$.
\end{coroll}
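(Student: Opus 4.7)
The strategy is a direct reduction to Theorem \ref{ThmDoeblinHarris} via a set inclusion: I will show that, up to adjusting the constant in front of $R$, the level set $\Gamma_R$ is contained in a set of the form $B_{R'}$ of the theorem, and then exploit the monotonicity of integration of non-negative functions with respect to the integration domain.

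More precisely, fix $R > e^2$ and take $(y,w) \in \Gamma_R$, so that
\[
e^2 + \frac{d(D)}{\|w\|\, c_4} - \sigma(y,-w) \leq R.
\]
Since any particle crosses the bounded domain in time at most $d(D)/\|w\|$, we have $\sigma(y,-w) \leq d(D)/\|w\|$. Substituting this bound and using $c_4 \in (0,1)$ gives
\[
\frac{d(D)}{\|w\|}\Bigl(\frac{1}{c_4} - 1\Bigr) \leq R - e^2,
\]
which yields a lower bound on the speed $\|w\| \geq \frac{d(D)(1-c_4)}{c_4(R-e^2)}$. Using once more $\sigma(y,w) \leq d(D)/\|w\|$ (now for the forward time), we conclude that $\sigma(y,w) \leq \frac{c_4 (R-e^2)}{1-c_4} \leq \frac{c_4 R}{1-c_4}$. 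Thus, setting $R_0 := e^2 + 1$ (any $R_0 > e^2$ works) and $R' := \frac{c_4 R}{1-c_4}$, we obtain the inclusion $\Gamma_R \subset B_{R'}$ for every $R \geq R_0$.

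To conclude, I apply Theorem \ref{ThmDoeblinHarris} at level $R'$: there exist $T(R') = \kappa R'$ and a non-negative measure $\nu$ on $G$, not identically zero, such that for every non-negative $f_0 \in L^1(G)$ and every $(x,v) \in G$,
\[
S_{T(R')} f_0(x,v) \geq \nu(x,v) \int_{B_{R'}} f_0(y,w)\, dw\, dy \geq \nu(x,v) \int_{\Gamma_R} f_0(y,w)\, dw\, dy,
\]
where the last inequality uses $f_0 \geq 0$ together with $\Gamma_R \subset B_{R'}$. Defining $T(R) := T(R') = \kappa R'$ and $\xi := \frac{\kappa c_4}{1-c_4}$ gives $T(R) = \xi R$, which is exactly the claim.

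There is no substantial analytic obstacle in this corollary, which is essentially bookkeeping: the only point requiring care is the restriction $R \geq R_0$, whose role is precisely to ensure $R > e^2$ so that the bound on $\|w\|$ makes sense (it must be positive), and ultimately to guarantee that $\Gamma_R$ is non-empty and fits inside a $B_{R'}$ to which Theorem \ref{ThmDoeblinHarris} applies.
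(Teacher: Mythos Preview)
Your proof is correct and follows the same strategy as the paper: reduce to Theorem~\ref{ThmDoeblinHarris} by showing a set inclusion $\Gamma_R \subset B_{R'}$ and use monotonicity of the integral. The only difference is cosmetic: the paper observes the sharper inequality $\sigma(y,w)+\sigma(y,-w)\leq d(D)/\|w\|$ (total crossing time through the bounded domain), from which $\langle y,w\rangle \geq \frac{d(D)}{\|w\|}-\sigma(y,-w)\geq \sigma(y,w)$ directly, yielding $\Gamma_R\subset B_R$ with the \emph{same} $R$ (hence $\xi=\kappa$). You instead bound $\sigma(y,-w)$ and $\sigma(y,w)$ separately by $d(D)/\|w\|$, which forces the detour through $R'=\tfrac{c_4}{1-c_4}R$ and a rescaled $\xi$; this is slightly less clean but entirely valid.
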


\begin{proof}
Set $R_0 = e^2 + 1$, so that $\lambda(\{(y,w) \in G, \langle y, w \rangle \leq R_0\}) > 0$ where $\lambda$ denotes the Lebesgue measure on $G$. We have, for all $(x,v) \in \bar{D} \times \RR^n$, by definition of $\sigma(x,v)$
\[ \sigma(x,v) + \sigma(x,-v) \leq \frac{d(D)}{\|v\|}, \]
and therefore, using $c_4 < 1$,
\[ \langle x,v \rangle \geq \frac{d(D)}{\|v\|} - \sigma(x,-v) \geq \sigma(x,v). \]
We conclude that for all $R \geq R_0$, $\Gamma_R \subset B_R$ with $\Gamma_R \neq \emptyset$, and the result follows from Theorem \ref{ThmDoeblinHarris}.
\end{proof}

%

\section{Preliminary interpolation results}
\label{SectionInterpolation}

In this section, we briefly present several results of interpolation theory used in the proof of Theorem \ref{ThmMain}. Those are generalizations of the Riesz-Thorin Theorem for weighted $L^1$ spaces and some of their subspaces. Recall that $G$ denotes $D \times \RR^n$. Recall also that for $(x,v) \in \bar{D} \times \RR^n$, we write $\langle x,v \rangle =  (e^2 + \frac{d(D)}{\|v\| c_4} - \sigma(x,-v))$, with $c_4$ given by (\ref{EqDefc4}). For any weight $w$ on $\bar{D} \times \RR^n$, we set $L^1_{w,0}(G) := \{f \in L^1_w(G), \langle f \rangle = 0\}$ that we endow with the norm $\|.\|_w$ and $L^1_0(G) := \{ f \in L^1(G), \langle f \rangle = 0\}$ which inherits the norm $\|.\|_{L^1}$ from $L^1(G)$. For $A, B$ two Banach spaces with respective norms $\|.\|_A$, $\|.\|_B$ and $T : A \to B$ a linear operator, $ \vertiii{T}_{A \to B}$ denote the operator norm of $T$, i.e. 
\[ \vertiii{T}_{A \to B} = \sup_{v \in A, v \ne 0} \frac{\|Tv\|_B}{\|v\|_A}. \]

We introduce the Maxwellian of temperature 1 given by
\[ M_1(v) = \frac{1}{(2\pi)^{n/2}} e^{-\frac{\|v\|^2}{2}}, \qquad v \in \RR^n. \]


\begin{lemma}[Interpolation of $L^1$-weighted spaces] \label{LemmaInterpolation}
Let $\phi_1, \phi_2, \tilde{\phi}_1, \tilde{\phi}_2$ four measurable functions on $G$ such that $\phi_1, \phi_2, \tilde{\phi}_1, \tilde{\phi}_2 > 0$ almost everywhere.  Set $A_1 = L^1_{\phi_1}(G),$ $A_2 = L^1_{\phi_2}(G)$, $\tilde{A}_1 = L^1_{\tilde{\phi}_1}(G)$, $\tilde{A}_2 = L^1_{\tilde{\phi}_2}(G)$.  Then, if $T$ is a linear operator from $A_1$ to $\tilde{A}_1$ and from $A_2$ to $\tilde{A}_2$ such that
\begin{equation}\begin{aligned}
\label{defInterpolSpace1}
 \vertiii{T}_{A_1 \to \tilde{A}_1} \leq N_1, \quad \vertiii{T}_{A_2 \to \tilde{A}_2} \leq N_2, 
 \end{aligned}\end{equation}
for some $N_1, N_2 > 0$, for any $\theta \in (0,1)$, for $\phi_{\theta}$, $\tilde{\phi}_{\theta}$ defined on $G$ by $\phi_{\theta} =  \phi_1^{\theta} \phi_2^{1-\theta}$ and for $\tilde{\phi}_{\theta} = \tilde{\phi}_1^{\theta} \tilde{\phi}_2^{1-\theta}$, $T$ is a linear operator from $A_{\theta} := L^1_{\phi_{\theta}}(G)$ to $\tilde{A}_{\theta} = L^1_{\tilde{\phi}_{\theta}}(G)$ satisfying
\begin{equation}\begin{aligned}
\label{defInterpolSpace2} 
\vertiii{T}_{A_{\theta} \to \tilde{A}_{\theta}} \leq N_{\theta},
\end{aligned}\end{equation}
for  $N_{\theta} = N_1^{\theta} N_2^{1 - \theta} > 0. $
\end{lemma}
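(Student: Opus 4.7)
The statement is exactly the classical Riesz--Thorin / Stein--Weiss interpolation theorem specialised to $p_0 = p_1 = q_0 = q_1 = 1$. The plan is to give a self-contained proof via Hadamard's three lines lemma, using duality with $L^\infty(G)$.

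Given $f \in A_\theta$ and $g \in L^\infty(G)$ with $\|g\|_\infty \leq 1$, I would introduce the analytic family of densities
\[ f_z := f \cdot \bigl(\phi_1/\phi_2\bigr)^{\theta - z}, \qquad z \in \mathbb{C}, \ 0 \leq \mathrm{Re}(z) \leq 1, \]
together with the auxiliary function
\[ F(z) := \int_G (T f_z)(x,v) \, g(x,v) \, \tilde{\phi}_1(x,v)^{z} \, \tilde{\phi}_2(x,v)^{1-z} \, dx \, dv. \]
The key pointwise identity $|f_z| \, \phi_1^{\mathrm{Re}(z)} \phi_2^{1-\mathrm{Re}(z)} = |f| \, \phi_\theta$ shows that $f_z$ belongs to $L^1_{\phi_1^{\mathrm{Re}(z)} \phi_2^{1-\mathrm{Re}(z)}}(G)$ with norm equal to $\|f\|_{A_\theta}$ throughout the strip; in particular $f_{it} \in A_2$ and $f_{1+it} \in A_1$ for every $t \in \RR$.

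Using $|\tilde{\phi}_i^{it}| = 1$ and $\|g\|_\infty \leq 1$, the two boundary estimates read
\[ |F(it)| \leq \|T f_{it}\|_{\tilde{A}_2} \leq N_2 \|f\|_{A_\theta}, \qquad |F(1+it)| \leq \|T f_{1+it}\|_{\tilde{A}_1} \leq N_1 \|f\|_{A_\theta}. \]
Hadamard's three lines lemma then gives $|F(\theta)| \leq N_1^\theta N_2^{1-\theta} \|f\|_{A_\theta} = N_\theta \|f\|_{A_\theta}$. Since $f_\theta = f$ and $\tilde{\phi}_1^\theta \tilde{\phi}_2^{1-\theta} = \tilde{\phi}_\theta$, this amounts to $\bigl|\int_G T f \cdot g \cdot \tilde{\phi}_\theta\, dx\,dv\bigr| \leq N_\theta \|f\|_{A_\theta}$. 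Taking the supremum over $g \in L^\infty(G)$ with $\|g\|_\infty \leq 1$ yields the desired estimate $\|T f\|_{\tilde{A}_\theta} \leq N_\theta \|f\|_{A_\theta}$.

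The main technical obstacle is to justify that $F$ is holomorphic in the interior of the strip and continuous and bounded on its closure, since $f_z$ does not a priori lie in either $A_1$ or $A_2$ for an interior $z$. The standard remedy is to first prove the estimate for simple functions $f$ supported on sets where $\phi_1, \phi_2, \tilde{\phi}_1, \tilde{\phi}_2$ are bounded between positive constants (so that $z \mapsto F(z)$ is clearly entire and of admissible growth, and all $L^1$ bounds carry over), and then extend to arbitrary $f \in A_\theta$ by a density argument using the linearity of $T$ and the monotone/dominated convergence of the weighted integrals.
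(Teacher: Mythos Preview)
Your argument is correct: this is precisely the Stein--Weiss proof via complex interpolation, and the boundary computations, the identity $|f_z|\,\phi_1^{\mathrm{Re}(z)}\phi_2^{1-\mathrm{Re}(z)}=|f|\,\phi_\theta$, and the density reduction to simple functions are all in order.

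The paper, however, does not reprove the result: it simply invokes it as a known theorem, citing Peetre's $K$-method and the Stein--Weiss theorem (Bergh--L\"ofstr\"om, Theorem~5.4.1) with $p=1$. So the difference is one of presentation rather than mathematics: the paper treats the lemma as a black-box citation to real interpolation theory, while you supply the classical complex-interpolation proof via Hadamard's three lines lemma. Your route is self-contained and gives the sharp constant $N_1^\theta N_2^{1-\theta}$ directly; the $K$-method route the paper points to is more robust (it extends immediately to the complemented-subspace setting of the next corollary and to the non-power interpolation of Lemma~\ref{LemmaGoulaouic}), which is presumably why the author favours that framework.
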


\begin{proof}
This is obtained by Peetre's K-method of interpolation \cite{peetre1968theory} and is a particular case of the Stein-Weiss Theorem with $p = 1$, see \cite[Theorem 5.4.1]{BerghInterpolationSpaces1976}.
\end{proof}

\begin{coroll}
\label{firstCorollInterpol}
Let $\theta \in (0,1)$ and $A_1, A_2, \tilde{A}_1, \tilde{A}_2, A_{\theta}, \tilde{A}_{\theta}$ defined as in Lemma \ref{LemmaInterpolation}. Assume that there exists a bounded projection $P: (A_i,\tilde{A}_i) \to (A_i',\tilde{A}_i')$ for $i \in \{1,2\}$ with $A_i' \subset A_i$, $\tilde{A}_i' \subset \tilde{A}_i$. Let $A'_{\theta} = (A'_1 + A'_2) \cap A_{\theta}$, $\tilde{A}'_{\theta} = (\tilde{A}'_1 + \tilde{A}'_2) \cap \tilde{A}_{\theta}$. Assume that $T$ is a linear operator from $A'_1$ to $\tilde{A}'_1$ and from $A'_2$ to $\tilde{A}'_2$ with
\[ \vertiii{T}_{A'_1 \to \tilde{A}'_1} \leq N_1, \qquad \vertiii{T}_{A_2' \to \tilde{A}_2'} \leq N_2, \]
for $N_1, N_2 > 0$. Then $T$ is a linear operator from $A'_{\theta}$ to $\tilde{A}'_{\theta}$ and there exists $C > 0$ depending only on $P$ such that
\[ \vertiii{T}_{A'_{\theta} \to \tilde{A}'_{\theta}} \leq C N_1^{\theta} N_2^{1- \theta}. \]

\end{coroll}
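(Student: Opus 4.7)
The plan is to reduce to Lemma \ref{LemmaInterpolation} by a standard extension-restriction trick: I would extend $T$ from $A'_i$ to all of $A_i$ by composing with the projection $P$, interpolate this extension, and then restrict the bound back to $A'_\theta$, using that $P$ acts as the identity on its image.

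Concretely, set $\tilde T := T \circ P$. Since $P$ sends $A_i$ boundedly into $A'_i \subset A_i$ and $T$ sends $A'_i$ into $\tilde A'_i \subset \tilde A_i$ with norm at most $N_i$, the composition is a bounded linear operator from $A_i$ into $\tilde A_i$ satisfying
\[ \vertiii{\tilde T}_{A_i \to \tilde A_i} \leq \vertiii{P}_{A_i \to A_i} \cdot N_i \leq K N_i, \]
where $K := \max_{i \in \{1,2\}} \vertiii{P}_{A_i \to A_i}$. Applying Lemma \ref{LemmaInterpolation} to $\tilde T$ with the constants $KN_1$ and $KN_2$ yields
\[ \vertiii{\tilde T}_{A_\theta \to \tilde A_\theta} \leq (KN_1)^{\theta}(KN_2)^{1-\theta} = K N_1^{\theta} N_2^{1-\theta}. \]

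Next I would verify that $\tilde T$ and $T$ coincide on $A'_\theta$. For $f \in A'_\theta$, write $f = f_1 + f_2$ with $f_i \in A'_i$; since a projection is the identity on its image, $P f_i = f_i$, hence $Pf = f$ and $\tilde T f = T(Pf) = Tf$. Moreover $Tf = Tf_1 + Tf_2 \in \tilde A'_1 + \tilde A'_2$, so $Tf$ actually lies in $\tilde A'_\theta = (\tilde A'_1 + \tilde A'_2) \cap \tilde A_\theta$. Combining this with the interpolation bound on $\tilde T$ gives
\[ \|Tf\|_{\tilde\phi_\theta} = \|\tilde T f\|_{\tilde\phi_\theta} \leq K N_1^{\theta} N_2^{1-\theta} \|f\|_{\phi_\theta}, \]
which is the claim with $C = K$.

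The only (rather mild) delicate point is that $P$ must be defined consistently on $A_1$ and $A_2$ so that $Pf$ is unambiguous for $f \in A_1 + A_2$ and the identity $P f_i = f_i$ can be invoked uniformly in $i$. This is implicit in the hypothesis that a single projection $P$ is bounded on both $(A_i,\tilde A_i)$ — in applications $P$ will be a concrete operation on the ambient space of measurable functions (such as extracting the zero-mean component), for which this compatibility is automatic. Once granted, the result is a direct corollary of Lemma \ref{LemmaInterpolation}.
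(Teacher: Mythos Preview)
Your proof is correct. The paper, by contrast, does not write out an argument: it simply observes that a complemented subcouple is a $K$-subcouple in the sense of interpolation theory and cites Janson \cite[Section 7, Theorem 2.1 and Example 7.1]{janson1993} for the conclusion. Your extension--restriction argument via $\tilde T = T\circ P$ is exactly the elementary mechanism underlying that general result, made explicit in this setting; it has the advantage of being self-contained (relying only on Lemma \ref{LemmaInterpolation}) and of producing the clean constant $C = \max_{i}\vertiii{P}_{A_i\to A_i}$, whereas the paper's route situates the statement in a broader abstract framework without spelling out the constant.
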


\begin{proof}
The couple $(A_1', A_2')$ is a complemented subcouple of $(A_1,A_2)$, and as such a so-called $K$-subcouple for the $K$-method of interpolation. The same thing holds with $(\tilde{A}_1', \tilde{A}_2')$ which is a complemented subcouple of $(\tilde{A}_1, \tilde{A}_2)$. This immediatly gives the result, see \cite[Section 7, Theorem 2.1 and Example 7.1]{janson1993}. We refer to \cite{janson1993} for details about those notions.
\end{proof}

We now turn to a second type of interpolation results in $L^1$ weighted spaces, no more focused on polynomial interpolation.  

\begin{lemma}
\label{LemmaGoulaouic}
For $(y,v) \in G$, let $\phi_1$ defined by $\phi_1(y,v) = (\langle y, v \rangle)$. Let $T$ be a linear operator from $L^1_{\phi_1}(G)$ to $L^1_{\phi_1}(G)$ and  from $L^1(G)$ to $L^1(G)$ such that
\[ \vertiii{T}_{L^1_{\phi_1}(G) \to L^1_{\phi_1}(G)} \leq N_1, \quad \vertiii{T}_{L^1(G) \to L^1(G)} \leq N_2, \]
for some $N_1, N_2 > 0$. Then, for $R(y,v) = \ln(\phi_1(y,v))$, $T$ is a linear operator from $L^1_{R}(G)$ to itself and there exists an explicit $C > 0$ such that
\[ \vertiii{T}_{L^1_R(G) \to L^1_R(G)} \leq C. \]
\end{lemma}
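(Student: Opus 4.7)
The plan is to combine the complex interpolation bound of Lemma \ref{LemmaInterpolation} with a dyadic decomposition of the weight $R = \ln(\phi_1)$, exploiting the elementary pointwise bound $\ln(y) \leq \theta^{-1} y^{\theta}$ valid for all $y \geq 1$ and all $\theta > 0$. Since $\phi_1(x,v) \geq e^2$ everywhere on $G$, the weight $R$ is bounded below by $2$, which will be crucial for the dyadic estimates.

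First, I apply Lemma \ref{LemmaInterpolation} with $A_1 = \tilde{A}_1 = L^1_{\phi_1}(G)$ and $A_2 = \tilde{A}_2 = L^1(G)$. For every $\theta \in (0,1)$ this yields
\[
\vertiii{T}_{L^1_{\phi_1^{\theta}}(G) \to L^1_{\phi_1^{\theta}}(G)} \leq N_1^{\theta} N_2^{1-\theta} \leq N, \qquad N := \max(N_1, N_2).
\]
This gives me a one-parameter family of boundedness estimates that I will exploit with $\theta$ tuned to the "scale" of the decomposition.

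Second, I decompose $G = \bigsqcup_{k \geq 1} A_k$ with $A_k = \{(x,v) \in G \,:\, 2^k \leq R(x,v) < 2^{k+1}\}$, and set $f_k = f \mathbf{1}_{A_k}$ so that $f = \sum_{k \geq 1} f_k$ with disjoint supports. On $A_k$, one has $\phi_1 < e^{2^{k+1}}$ and $R \geq 2^k$. Fix a small constant $c \in (0,1)$ and set $\theta_k = c\, 2^{-(k+1)} \in (0,1/4)$. Then on $A_k$,
\[
\phi_1^{\theta_k} \leq e^{\theta_k \cdot 2^{k+1}} = e^c,
\]
so $\|f_k\|_{\phi_1^{\theta_k}} \leq e^c \|f_k\|_{L^1} \leq e^c \cdot 2^{-k} \|f_k\|_R$. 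Using the pointwise bound $\ln(\phi_1) \leq \theta_k^{-1} \phi_1^{\theta_k}$ (valid on all of $G$, hence on the support of $T f_k$), together with the interpolation estimate of Step 1,
\[
\|T f_k\|_R \leq \frac{1}{\theta_k} \|T f_k\|_{\phi_1^{\theta_k}} \leq \frac{N}{\theta_k} \|f_k\|_{\phi_1^{\theta_k}} \leq \frac{2^{k+1}}{c} \cdot N \cdot e^c \cdot 2^{-k} \|f_k\|_R = \frac{2 N e^c}{c} \|f_k\|_R.
\]
The critical point is that the factors $2^k$ coming from $\theta_k^{-1}$, the factor $e^c$ coming from the support of $f_k$, and the factor $2^{-k}$ coming from the lower bound on $R$ on $A_k$ all combine into a single constant $C = 2 N e^c / c$ independent of $k$.

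Finally, by triangle inequality and using that the supports of the $f_k$ partition $G$,
\[
\|T f\|_R \leq \sum_{k \geq 1} \|T f_k\|_R \leq C \sum_{k \geq 1} \|f_k\|_R = C \|f\|_R,
\]
which concludes the proof. The main subtlety lies in Step 2: one must tune $\theta_k$ precisely to the scale $2^k$ of the dyadic level set (so that $\phi_1^{\theta_k}$ stays $O(1)$ on $A_k$), and then verify that the three scale-dependent factors cancel so that $C$ does not depend on $k$. Nothing in the argument requires knowing more about $T$ beyond the two endpoint bounds, and the constant $C$ is explicit in terms of $N_1$, $N_2$ and the free parameter $c$ (which may, for instance, be fixed as $c = 1$).
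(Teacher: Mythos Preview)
Your proof is correct and takes a genuinely different route from the paper's. The paper invokes an abstract interpolation result of Goulaouic, which says that $L^1_{\phi_2}(G)$ is an interpolation space for the couple $(L^1(G), L^1_{\phi_1}(G))$ whenever $\phi_2$ admits the representation $\phi_2 = \int_0^\infty \min(\phi_1, t)\,\gamma(dt)$ for a suitable measure $\gamma$; it then constructs the specific measure $\gamma = t^{-2}\mathbf{1}_{(e,\infty)}\,dt$ and checks that this yields $\phi_2 = \ln(\phi_1)$. Your argument bypasses this machinery entirely: you use only the polynomial-weight interpolation already recorded as Lemma~\ref{LemmaInterpolation}, decompose $f$ dyadically along the level sets $A_k = \{2^k \leq R < 2^{k+1}\}$, and tune the interpolation parameter $\theta_k \sim 2^{-k}$ to each scale so that the three factors $\theta_k^{-1}$, $\sup_{A_k}\phi_1^{\theta_k}$, and $(\inf_{A_k} R)^{-1}$ cancel to a uniform constant. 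This is more elementary and fully self-contained within the paper, and the resulting constant $C = 2Ne^c/c$ with $N = \max(N_1,N_2)$ is as explicit as the one coming from Goulaouic's theorem. The paper's approach, on the other hand, situates the result inside a general framework and would adapt immediately to other weights $\phi_2$ given by integral means of $\phi_1$. One minor remark: Lemma~\ref{LemmaInterpolation} in the paper is obtained by the real ($K$-) method rather than complex interpolation, but this does not affect your argument since only the endpoint-to-intermediate bound is used.
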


\begin{proof}
From \cite[Chapter 2, Theorem 1]{Goulaouic}, given a weight $\phi_1$, the space $L^1_{\phi_2}(G)$ is an interpolation space (and therefore $ \vertiii{T}_{L^1_{\phi_2}(G) \to L^1_{\phi_2}(G)} \leq C$ for some constant $C > 0$) for the couple $(L^1(G), L^1_{\phi_1}(G))$ if $\phi_2$ satisfies for all $(y,v) \in G$, 
\[ \phi_2(y,v) = \int_0^{\infty} \min(\phi_1(y,v),t) \gamma(dt), \]
for some positive measure $\gamma$ on $(0,\infty)$, $\gamma \not \equiv 0$ and satisfying
\[ \int_0^{\infty} \min(1,t) \gamma(dt) < \infty. \]
The constant $C$ then depends only on $N_1, N_2$ and $\gamma$.
In particular, we consider a measure of the form $\gamma = f \lambda$, with $\lambda$ the Lebesgue measure on $(0,\infty)$, and 
\[ \begin{aligned}
f(t) = \left \{ \begin{array}{ll}
0 &\text{ if } t \in (0,e), \\
\frac{1}{t^2} &\text{ if } t \in (e, \infty). 
\end{array}
\right.
\end{aligned} \]
We then have, for all $(y,v) \in G$, \[\phi_2(y,v) = \int_e^{\phi_1(y,v)} \frac{dt}{t} + \phi_1(y,v) \int_{\phi_1(y,v)}^{\infty} \frac{dt}{t^2}  = \ln(\phi_1(y,v)) = R(y,v), \]
and since $\phi_1(y,v) = \langle y, v \rangle$ for all $(y,v) \in G$, the result follows.
\end{proof}

\begin{coroll}
\label{Coroll2Goulaouic}
Lemma \ref{LemmaGoulaouic} holds when replacing the space $L^1_w(G)$ by $L^1_{w,0}(G)$ for any weight $w$ on $G$ considered, including replacing $L^1(G)$ by $L^1_0(G)$.
\end{coroll}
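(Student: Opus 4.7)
The plan is to reduce Corollary \ref{Coroll2Goulaouic} to Lemma \ref{LemmaGoulaouic} itself by extending $T$ from the mean-zero subspaces to the full weighted $L^1$ spaces via a common projection. First I would fix a nonnegative function $g \in L^1(G)$ with $\langle g \rangle = 1$, compactly supported inside $G$ and staying a positive distance from the singular locus $\{v=0\}$ (for instance, $g$ proportional to the indicator of $D \times (B(0,2) \setminus B(0,1))$). Since $\phi_1(x,v) = \langle x,v\rangle$ is bounded on the support of such $g$, one has $g \in L^1_{\phi_1}(G) \subset L^1_R(G) \subset L^1(G)$ simultaneously.

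With $g$ fixed, the map $P(f) := f - \langle f \rangle g$ is a bounded projection of $L^1_w(G)$ onto its mean-zero subspace $L^1_{w,0}(G)$ for each weight $w \in \{1, R, \phi_1\}$, with operator norm at most $1 + \|g\|_w$ (the linear functional $f \mapsto \langle f \rangle$ being bounded on every $L^1_w(G)$ since $w \geq 1$). Next I would introduce the extended operator $\tilde{T} := T \circ P$. For any $f$ in $L^1_{\phi_1}(G)$, the intermediate vector $P(f)$ already lies in $L^1_{\phi_1,0}(G)$, which is in the domain of $T$, and the hypothesis on $T$ yields the two bounds
\[
\vertiii{\tilde T}_{L^1_{\phi_1}(G) \to L^1_{\phi_1}(G)} \leq N_1(1 + \|g\|_{\phi_1}), \quad \vertiii{\tilde T}_{L^1(G) \to L^1(G)} \leq N_2(1 + \|g\|_{L^1}).
\]
Applying Lemma \ref{LemmaGoulaouic} directly to $\tilde T$ then furnishes an explicit constant $C$ such that $\vertiii{\tilde T}_{L^1_R(G) \to L^1_R(G)} \leq C$.

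To conclude, for any $f \in L^1_{R,0}(G) \subset L^1_0(G)$ one has $\langle f\rangle = 0$, hence $P(f) = f$, so $\tilde T f = T f$; moreover $T f \in L^1_0(G)$ by hypothesis. Combining this with the interpolated bound on $\tilde T$ gives $T f \in L^1_R(G) \cap L^1_0(G) = L^1_{R,0}(G)$ together with the desired inequality $\|T f\|_R \leq C \|f\|_R$. The only slightly delicate point is the choice of the reference function $g$ inside the smallest space $L^1_{\phi_1}(G)$, but taking $g$ with compact support away from $v = 0$ handles this without obstacle, so I do not anticipate any genuine difficulty. This scheme is parallel in spirit to Corollary \ref{firstCorollInterpol}, reflecting the fact that the $K$-method of interpolation underlying Lemma \ref{LemmaGoulaouic} behaves well with respect to complemented subcouples such as the mean-zero hyperplane.
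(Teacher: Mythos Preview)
Your proposal is correct and follows essentially the same idea as the paper: use a bounded projection onto the mean-zero hyperplane so that the interpolation result on the full spaces transfers to the subspaces. The paper's proof takes $Pf = f - \tfrac{M_1(v)}{|D|}\langle f\rangle$ and then invokes the abstract complemented-subcouple machinery of Janson (as in Corollary~\ref{firstCorollInterpol}); your version is slightly more self-contained, since by forming $\tilde T = T\circ P$ you can apply Lemma~\ref{LemmaGoulaouic} directly without that external reference. The choice of reference density ($M_1/|D|$ versus a compactly supported $g$ away from $v=0$) is immaterial, as both lie in $L^1_{\phi_1}(G)$.
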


\begin{proof}
We set, for $f \in L^1(G)$, for all $(x,v) \in G$, $Pf(x,v) = f(x,v) - \frac{M_1(v)}{|D|} \int_{G} f(y,w) dw dy$. Then, for $\phi_1 \geq 1$ defined as in Lemma \ref{LemmaGoulaouic}, we have
\[ \|Pf\|_{L^1} \leq 2 \|f\|_{L^1} \quad  \text{and } \quad \|Pf\|_{\phi_1} \leq (1+c) \|f\|_{\phi_1}, \]
with $c = \int_{G} M_1(v) \frac{\phi_1(x,v)}{|D|}  dv dx< \infty$. The map $P$ is obviously linear, and $P^2 f = Pf$. We conclude as in the proof of Corollary \ref{firstCorollInterpol}  that, setting for all $(x,v) \in G$, $\phi_2(x,v) = \ln( \langle x,v \rangle)$,
\[ L^1_{\phi_2}(G) \cap (L^1_0(G) + L^1_{\phi_1,0}(G)) = L^1_{\phi_2,0}(G), \]
is the interpolation space  required, i.e. is such that for any $T$ linear from $L^1_0(G)$ to itself and from $L^1_{\phi_1,0}(G)$ to itself with 
\[ \vertiii{T}_{L^1_0(G) \to L^1_0(G)} \leq N_1, \qquad \vertiii{T}_{L^1_{\phi_1,0}(G) \to L^1_{\phi_1, 0}(G)} \leq N_2 , \]
for two constants $N_1, N_2 > 0$, $T$ is a linear operator from $L^1_{\phi_2,0}(G)$ to itself and there exists $N > 0$ explicit such that
\[ \vertiii{T}_{L^1_{\phi_2,0}(G) \to L^1_{\phi_2,0}(G)} \leq N. \] 

\vspace{-.6cm}

\end{proof} 

\section{Proof of Theorem \ref{ThmMain}, Theorem \ref{ThmEquilibrium} and Corollary \ref{MainCorol}}
\label{SectionProof}
This section is devoted to the proof of Theorem \ref{ThmMain}, Theorem \ref{ThmEquilibrium} and Corollary \ref{MainCorol}. We recall the notation $\langle x, v \rangle = (e^2 + \frac{d(D)}{\|v\| c_4} - \sigma(x,-v))$ for all $(x,v) \in \bar{D} \times \RR^n$. In this section, the constants are explicit up to the fact that they depend on $\nu$ given by Corollary \ref{CorolDoeblin}. As already stated, $\nu$ itself may not be explicit,  see Remark \ref{RmkExplicitNu}.

In the first subsection, we establish some contraction property for a well-chosen norm. In the second part, we use this property and the previous results to conclude the proof of Theorem \ref{ThmMain}. Subsection \ref{SubsectionProofEquilibrium} is devoted to the proof of Theorem \ref{ThmEquilibrium} and Corollary \ref{MainCorol}. 

\subsection{Contraction property in well-chosen norm}
\label{SubsectionPreliminaryProof}

This subsection is devoted to the proof of the following lemma.


\begin{lemma}
\label{LemmaContractionm3}
For all $\epsilon \in (0,3)$, setting $\bar{\omega}_{k}(x,v) = \langle x,v \rangle^{k} \ln(\langle x,v \rangle)^{-(1+\epsilon)}$ on $G$ with the value $k \in \llbracket n-1, n+1\rrbracket$  there exists $T_0 > 0$ such that for all $T \geq T_0$, there exist $\beta(T) > 0$, $\alpha = C_3 \beta(T) T$ with $C_3 > 0$ constant such that, for all $f \in L^1_{\bar{\omega}_{n+1}}(G)$ with $\langle f \rangle = 0$,
we have
\begin{equation}\begin{aligned}
\label{EqContractionLemmaIntermediate}
\|S_T f\|_{L^1} + \beta \|S_T f\|_{\bar{\omega}_{n+1}} + \alpha \|S_T f\|_{\bar{\omega}_n} \leq \|f\|_{L^1} + \beta \|f\|_{\bar{\omega}_{n+1}} + \frac{\alpha}{3} \|f\|_{\bar{\omega}_n},
\end{aligned}\end{equation}
so that, setting \[\vertiii{.}_{\bar{\omega}_{n+1}} := \|.\|_{L^1} + \beta \|.\|_{\bar{\omega}_{n+1}} + \alpha \|.\|_{\bar{\omega}_n}, \]
there holds
$ \vertiii{S_T f}_{\bar{\omega}_{n+1}} \leq  \vertiii{f}_{\bar{\omega}_{n+1}}$. Moreover, there exists $M_{n+1} > 1$ such that for all $f \in L^1_{\bar{\omega}_{n+1}}(G)$ with $\langle f \rangle = 0$, \[ \|S_T f\|_{\bar{\omega}_{n+1}} \leq M_{n+1} \|f\|_{\bar{\omega}_{n+1}}. \] 
Finally, setting $\tilde{w}_i(x,v) = \langle x,v \rangle ^{i-\frac12}$ on $G$ with $i \in \{1, \dots, 4\}$, there exists $\tilde{T}_0 > 0$ such that for all $T \geq \tilde{T}_0$, there exists $\tilde{M}_{n+1} > 0$ such that for all $f \in L^1_{\tilde{w}_{n+1}}(G)$ with $\langle f \rangle = 0$,
\[ \|S_T f\|_{\tilde{\omega}_{n+1}} \leq \tilde{M}_{n+1} \|f\|_{\tilde{\omega}_{n+1}}. \]
\end{lemma}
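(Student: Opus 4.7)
The proof adapts the Ca\~nizo--Mischler subgeometric Harris scheme: combine the Doeblin-Harris minoration of Corollary \ref{CorolDoeblin} with the integrated Lyapunov inequality of Lemma \ref{LemmaLyapunov} applied at two consecutive indices, and use the semigroup property to turn the integral-in-time term furnished by Lyapunov into a pointwise bound on $\|S_T f\|_{\bar{\omega}_n}$. The multipliers $\alpha,\beta$ are then calibrated so that these integral terms combine with non-positive coefficient and so that the Doeblin-Harris gain absorbs the linear-in-$T$ error produced by Lyapunov on $\|f\|_{L^1}$.

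Concretely, I would fix $R=T/\xi$ so that $T(R)=T$ in Corollary \ref{CorolDoeblin}, with $T\ge T_0\ge \xi R_0$. Since $\langle f\rangle=0$, write $f=f_+-f_-$ with $\|f_\pm\|_{L^1}=\|f\|_{L^1}/2=:m$; applying the minoration $S_Tf_\pm\ge \nu\int_{\Gamma_R}f_\pm$, subtracting the common lower bound $\mu\nu$ with $\mu=\min(\int_{\Gamma_R}f_+,\int_{\Gamma_R}f_-)$, and invoking mass conservation (Theorem \ref{ThmPositivityL1contraction}) together with $\mu\ge m-\|f\mathbf{1}_{\Gamma_R^c}\|_{L^1}$ will yield, after noting that $u\mapsto u^n\ln(u)^{-(1+\epsilon)}$ is increasing on $[e^2,\infty)$ for $n\ge 2$ and $\epsilon<3$,
\[
\|S_Tf\|_{L^1}\le (1-\delta)\|f\|_{L^1}+\frac{2\delta\ln(R)^{1+\epsilon}}{R^n}\|f\|_{\bar{\omega}_n},\qquad \delta:=\|\nu\|_{L^1}>0.
\]
Lemma \ref{LemmaLyapunov} case (1) at $i=n+1$ furnishes $\|S_Tf\|_{\bar{\omega}_{n+1}}+C\int_0^T\|S_sf\|_{\bar{\omega}_n}\,ds\le \|f\|_{\bar{\omega}_{n+1}}+b(1+T)\|f\|_{L^1}$, while the same lemma at $i=n$, applied to $S_sf$ on $[s,T]$ and combined with the $L^1$-contraction of $(S_t)_{t\ge 0}$, produces $\|S_Tf\|_{\bar{\omega}_n}\le \|S_sf\|_{\bar{\omega}_n}+b'(1+T)\|f\|_{L^1}$ for every $s\in[0,T]$; averaging over $s\in[0,T]$ converts the integral into
\[
\|S_Tf\|_{\bar{\omega}_n}\le \frac1T\int_0^T\|S_sf\|_{\bar{\omega}_n}\,ds+b'(1+T)\|f\|_{L^1}.
\]

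Multiplying the first Lyapunov inequality by $\beta$, the averaged bound by $\alpha=C_3\beta T$ with $C_3=C/2$ (so that $\alpha/T-\beta C\le 0$ and the integrals-in-time combine with non-positive coefficient and may be discarded), and adding the $L^1$ estimate, one obtains
\[
\vertiii{S_Tf}_{\bar{\omega}_{n+1}}\le \bigl[1-\delta+\beta(b+C_3 Tb')(1+T)\bigr]\|f\|_{L^1}+\beta\|f\|_{\bar{\omega}_{n+1}}+\frac{2\delta\ln(R)^{1+\epsilon}}{R^n}\|f\|_{\bar{\omega}_n}.
\]
I would then pick $\beta=\beta(T)$ inside the window $\bigl[\tfrac{6\delta\xi^n\ln(T/\xi)^{1+\epsilon}}{C_3 T^{n+1}},\tfrac{\delta}{(b+C_3Tb')(1+T)}\bigr]$, so that simultaneously the $\|f\|_{L^1}$ coefficient is $\le 1$ and the $\|f\|_{\bar{\omega}_n}$ coefficient is $\le \alpha/3$; this is exactly \eqref{EqContractionLemmaIntermediate}. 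The main obstacle is this two-sided calibration: the lower endpoint (forced by the Doeblin-Harris tail estimate) must lie below the upper endpoint (forced by the $(1+T)\|f\|_{L^1}$ error in Lyapunov). The ratio of the two is $O(T^{1-n}\ln(T)^{1+\epsilon})\to 0$ precisely because $n\ge 2$, which is what makes the scheme close and dictates $T\ge T_0$ with $T_0$ sufficiently large (possibly absorbing any polynomial dependence of $\delta$ on $R$).

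The last two conclusions are much softer: since $\bar{\omega}_{n+1}$ is uniformly bounded below by a positive constant $c_n$ on $G$, Lemma \ref{LemmaLyapunov} case (1) directly yields $\|S_Tf\|_{\bar{\omega}_{n+1}}\le (1+b(1+T)c_n^{-1})\|f\|_{\bar{\omega}_{n+1}}$, so that $M_{n+1}:=1+b(1+T)c_n^{-1}>1$ works (the mean-zero hypothesis plays no role here). An identical embedding combined with Lemma \ref{LemmaLyapunov} case (2) at $i=(2n+1)/2$ produces $\tilde M_{n+1}$ for the polynomial weight $\tilde w_{n+1}=\langle\cdot,\cdot\rangle^{n+1/2}$.
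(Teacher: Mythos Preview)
Your argument is correct and structurally close to the paper's, but you organize the Doeblin--Harris step differently. The paper proceeds by a case split on the alternative $\|f\|_{\bar\omega_n}\le A\|f\|_{L^1}$ versus $\|f\|_{\bar\omega_n}>A\|f\|_{L^1}$: in the first case the minoration yields a strict $L^1$-contraction $\|S_Tf\|_{L^1}\le\tilde\eta\|f\|_{L^1}$ and $\beta$ is chosen small enough to absorb the Lyapunov error $\kappa(\rho) T\|f\|_{L^1}$ into the gain $1-\tilde\eta$; in the second case the minoration is not used at all and the error is absorbed directly into $\tfrac{C_3T}{3}\|f\|_{\bar\omega_n}$. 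You instead derive the single unconditional estimate $\|S_Tf\|_{L^1}\le(1-\delta)\|f\|_{L^1}+2\delta R^{-n}\ln(R)^{1+\epsilon}\|f\|_{\bar\omega_n}$ and calibrate $\beta$ in a two-sided window; the fact that $\delta=\|\nu\|_{L^1}$ may depend on $R$ is harmless since it cancels in the ratio of the window endpoints. Both schemes reduce to the same asymptotic constraint $T^{1-n}\ln(T)^{1+\epsilon}\to 0$, which is exactly where $n\ge 2$ enters. Your conversion of the integral Lyapunov term into a pointwise bound on $\|S_Tf\|_{\bar\omega_n}$ by averaging in $s$ is precisely the paper's Step~1, just presented differently (they substitute $\|S_Tf\|_{\bar\omega_n}-b_2(1+T-s)\|S_sf\|_{L^1}\le\|S_sf\|_{\bar\omega_n}$ inside the integral rather than average, with the same output up to constants). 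For the last two bounds you give a more elementary argument than the paper: you use only the Lyapunov inequality and the uniform lower bound $\bar\omega_{n+1}\ge c_n>0$ (respectively $\tilde w_{n+1}\ge e^{2n+1}$), whereas the paper extracts $M_{n+1}$ from the full triple-norm contraction and hence invokes the mean-zero hypothesis there as well; your route shows that this hypothesis is in fact unnecessary for those two conclusions.
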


\begin{proof} 

We prove the result on $\bar{\omega}_{n+1}$ first, and explain how to adapt the argument for the second statement at the end of the proof.

\vspace{.3cm} 

\textbf{Step 1.} We use the Lyapunov condition, Lemma \ref{LemmaLyapunov}, case (1), with both $\bar{\omega}_{n+1}$ and $\bar{\omega}_n$ to deduce a new integral inequality. For any $T > 0$, using Lemma \ref{LemmaLyapunov}, with $C_3, C_2, \tilde{b}_3, b_2 > 0$ constant, for all $f \in L^1_{\bar{\omega}_{n+1}}(G)$,
\begin{linenomath}
\begin{subequations}\begin{alignat}{2}
\label{EqLyapunovOmegaPlus1}
\hspace{.4cm }\|S_T f \|_{\bar{\omega}_{n+1}} + C_3 \int_0^T \|S_t f\|_{\bar{\omega}_n} dt &\leq \|f\|_{\bar{\omega}_{n+1}} + \tilde{b}_3 (1+T) \| f\|_{L^1} , \\
\label{EqLyapunovm2}
\text{ and } \|S_T f \|_{\bar{\omega}_n} + C_2 \int_0^T \|S_t f\|_{\bar{\omega}_{n-1}} dt &\leq \|f\|_{\bar{\omega}_n} + b_2 (1+T) \| f\|_{L^1}.
\end{alignat}\end{subequations}
\end{linenomath}
Let $t \in (0,T)$. From (\ref{EqLyapunovm2}) we deduce
\[ \|S_{T-t}S_t f\|_{\bar{\omega}_n} \leq \|S_t f\|_{\bar{\omega}_n} + b_2 (1+T-t) \|S_t f\|_{L^1}, \]
which we rewrite as
\[ \|S_T f\|_{\bar{\omega}_n} -  b_2 (1+T-t) \|S_t f\|_{L^1} \leq \|S_t f\|_{\bar{\omega}_n}. \]
We plug this inside (\ref{EqLyapunovOmegaPlus1}) to obtain
\[ \begin{aligned}
\|S_T f\|_{\bar{\omega}_{n+1}} + C_3 \int_0^T \Big( \|S_T f\|_{\bar{\omega}_n} - b_2 (1 + T - t) \|S_t f\|_{L^1} \Big) dt \leq \|f\|_{\bar{\omega}_{n+1}} + \tilde{b}_3 (1+T) \|f\|_{L^1}.
\end{aligned} \]
Using the $L^1$ contraction result from Theorem \ref{ThmPositivityL1contraction}, we conclude
\begin{equation}\begin{aligned}
\label{IneqFinalM3}
\|S_T f\|_{\bar{\omega}_{n+1}} + C_3 T \|S_T f\|_{\bar{\omega}_n} \leq \|f\|_{\bar{\omega}_{n+1}} + b_3(1+T+T^2)\|f\|_{L^1},
\end{aligned}\end{equation}
with $b_3 > 0$ constant. 

\vspace{.5cm}

\textbf{Step 2.}  From the Doeblin-Harris condition, Theorem \ref{ThmDoeblinHarris}, and more precisely Corollary \ref{CorolDoeblin}, for all $\rho > R_0$, there exist $T(\rho) = \xi \rho$ for some constant $\xi > 0$ and a measure $\nu$ on $G$ with $\nu \not \equiv 0$ such that
\[ S_{T(\rho)}h \geq \nu \int_{\{(x,v) \in G: \langle x, v \rangle \leq \rho\}} h dvdx, \]
for all $h \in L^1(G)$ with $h \geq 0$. 

Recall that by assumption $f$ is such that $f \in L^1_{\bar{\omega}_{n+1}}(G)$, and
$\langle f \rangle = 0.$

\noindent Set for any $\rho \geq R_0$, $\bar{\bar{\omega}}_n(\rho) := \rho^{n} \ln(\rho)^{-(1 + \epsilon)}$ and $\kappa(\rho) = \frac{b_3(1+T+T^2)}{T}(\rho)$. Since $T(\rho) = \xi \rho$ for some constant $\xi > 0$, $\kappa(\rho) \underset{\rho \to +\infty}{\sim} C\rho$ for some $C > 0$. Since $ n \in \{2,3\}$ one can find $\rho_0$ such that  for all $\rho \geq \rho_0$, $\bar{\bar{\omega}}_n(\rho) \geq \frac{12 \kappa(\rho)}{C_3}$. We fix $\rho > \rho_0$, $T = T(\rho) > T(\rho_0)$ for the remaining part of the proof. Note that since $T(\rho) = \xi \rho$ for some given constant $\xi$, any choice of $T > T(\rho_0)$ is possible. We set $A :=  \frac{\bar{\bar{\omega}}_n(\rho)}{4}$, and define, for all $\beta > 0$, the $\beta$-norm by:
\[ \begin{aligned}
\|f\|_{\beta} := \|f\|_{L^1} + \beta \|f\|_{\bar{\omega}_{n+1}}.
\end{aligned} \]
We distinguish two cases. Indeed, we have the alternative
\begin{linenomath}
\begin{subequations}\begin{alignat}{2}
\label{EqAlternative1}
\hspace{.55cm}  \|f\|_{\bar{\omega}_{n}} \leq A \|f\|_{L^1}, \\
\label{EqAlternative2}
\text{ or } \|f\|_{\bar{\omega}_{n}} > A \|f\|_{L^1}.
\end{alignat}\end{subequations}
\end{linenomath}

\vspace{.5cm}

\textbf{Step 3.} We prove a convergence result in the $\beta$-norm in the case of the first alternative, (\ref{EqAlternative1}). Recall that for all $R > 0$, $\Gamma_R = \{(x,v) \in G, \langle x,v \rangle \leq R\}$. Using $\langle f \rangle = 0$, we have for all $(x,v) \in G$, 
\[ \begin{aligned}
S_Tf_{\pm}(x,v) &\geq \nu(x,v) \int_{G} f_{\pm}(x',v') dv' dx' - \nu \int_{\Gamma_{\rho}^c} f_{\pm}(x',v') dv' dx' \\
& \geq \frac{\nu(x,v)}{2} \int_G |f(x',v')| dv' dx'  - \nu(x,v) \int_{\Gamma_{\rho}^c} |f(x',v')| dv' dx' \\
& \geq \frac{\nu(x,v)}{2} \int_G |f(x',v')| dv' dx'  - \frac{\nu(x,v)}{\bar{\bar{\omega}}_n(\rho)} \int_{G}  |f(x',v')| \bar{\omega}_{n}(x',v') dv' dx' \\
& \geq \frac{\nu(x,v)}{2} \int_G |f(x',v')| dv' dx'  - \frac{\nu(x,v)}{4}  \int_{G}  |f(x',v')| dv' dx' \\
&= \frac{\nu(x,v)}{4}  \int_{G}  |f(x',v')| dv' dx' := \eta(x,v),
\end{aligned} \] 
where the third inequality is given by definition of $\Gamma_{\rho}$ and $\bar{\omega}_n \geq 1$, since $\bar{\omega}_{n}(x,v) \leq \bar{\bar{\omega}}_n(\rho)$ for all $(x,v) \in \Gamma_{\rho}$, recalling also that $\langle x,v \rangle \geq e^2$. The last inequality is obtained by condition (\ref{EqAlternative1}). The final equality stands for a definition of $\eta(x,v)$ for all $(x,v) \in G$. Note that $\eta \geq 0$ on $G$.
We deduce,
\[ \begin{aligned}
|S_T f| &= |S_T f_+ - \eta - (S_T f_- - \eta)| \\
		&\leq |S_T f_+ - \eta| + |S_T f_- - \eta| \\
		&= S_T f_+ + S_T f_- - 2 \eta = S_T|f| - 2 \eta,
\end{aligned} \]
and, integrating over $G$, we obtain, using also the mass conservation, that $\eta = \frac{\nu}{4} \|f\|_{L^1}$, and that $\nu$ is non-negative,
\begin{equation}\begin{aligned}
\label{IneqContractionDoeblin}
\|S_T f \|_{L^1} \leq \|f\|_{L^1} - 2 \|\eta\|_{L^1} = ( 1- \frac{\langle \nu \rangle}{2} ) \|f\|_{L^1} = \tilde{\eta} \|f\|_{L^1},
\end{aligned}\end{equation}
with $\tilde{\eta} \in (0, 1)$. Hence, $S_T$ is a strict contraction in $L^1$ in the case where $f$ satisfies (\ref{EqAlternative1}).
We use this result along with (\ref{IneqFinalM3}) and the definition of $\kappa(\rho)$ to derive an inequality on the $\beta$-norm of $S_T f$
\[ \begin{aligned}
\|S_T f\|_{\beta} &= \|S_T f\|_{L^1} + \beta \|S_T f\|_{\bar{\omega}_{n+1}} \\
&\leq \tilde{\eta} \|f\|_{L^1} + \beta \big( - C_3 T \|S_T f\|_{\bar{\omega}_{n}} + \|f\|_{\bar{\omega}_{n+1}} + \kappa(\rho) T \|f\|_{L^1} \big) \\
&\leq \beta \|f\|_{\bar{\omega}_{n+1}} + (\tilde{\eta}+ \kappa(\rho) T\beta ) \|f\|_{L^1} - \beta C_3 T \|S_T f\|_{\bar{\omega}_{n}}.
\end{aligned} \]
Finally, we choose $0 < \beta \leq \frac{1 - \tilde{\eta}}{ \kappa(\rho) T}$ and deduce
\begin{equation}\begin{aligned}
\label{EqPreliminaryBeta}
\|S_T f\|_{\beta} + C_3 \beta T \|S_T f\|_{\bar{\omega}_{n}} \leq \|f\|_{\beta}. 
\end{aligned}\end{equation} 

\vspace{.5cm}

\textbf{Step 4.} We prove that a slightly different version of (\ref{EqPreliminaryBeta}) also holds in the case (\ref{EqAlternative2}). 
From (\ref{IneqFinalM3}), using (\ref{EqAlternative2}), we have, for $T$, $\kappa(\rho)$ fixed as above
\[ \begin{aligned}
\|S_T f\|_{\bar{\omega}_{n+1}} + C_3 T \|S_T f\|_{\bar{\omega}_{n}} &\leq \|f\|_{\bar{\omega}_{n+1}} + \frac{\kappa(\rho) T}{A} \|f\|_{\bar{\omega}_{n}}.
\end{aligned} \]
Since $A \geq \frac{3 \kappa(\rho)}{C_3}$, it follows that
\[ \begin{aligned}
\|S_T f\|_{\bar{\omega}_{n+1}} + C_3 T \|S_T f\|_{\bar{\omega}_{n}} &\leq  \|f\|_{\bar{\omega}_{n+1}} + \frac{C_3T}{3} \|f\|_{\bar{\omega}_{n}}.
\end{aligned} \]
Using this inequality and the $L^1$ contraction we deduce
\begin{equation}\begin{aligned}
\label{EqPreliminaryBetaCase2}
\|S_T f\|_{\beta} + C_3 \beta T \|S_T f\|_{\bar{\omega}_{n}} &= \|S_T f\|_{L^1} + \beta \|S_T f\|_{\bar{\omega}_{n+1}} + C_3 \beta T \|S_T f\|_{\bar{\omega}_{n}}  \\
&\leq \|f\|_{L^1} + \beta \|f\|_{\bar{\omega}_{n+1}} + \beta \frac{C_3 T}{3} \|f\|_{\bar{\omega}_{n}}  \\
&= \|f\|_{\beta} + \beta C_3 \frac{T}{3} \|f\|_{\bar{\omega}_{n}}.
\end{aligned}\end{equation}

\vspace{.5cm}

\textbf{Step 5.} For $\beta$ as above and $\alpha = C_3 \beta T$, we have $\vertiii{.}_{\bar{\omega}_{n+1}} = \|.\|_{\beta} + \alpha \|.\|_{\bar{\omega}_n}$. Gathering (\ref{EqPreliminaryBeta}) and (\ref{EqPreliminaryBetaCase2}), we conclude that (\ref{EqContractionLemmaIntermediate}) holds and we deduce
\[ \begin{aligned}
\vertiii{S_T f}_{\bar{\omega}_{n+1}} \leq \vertiii{ f}_{\bar{\omega}_{n+1}}.
\end{aligned} \]

Since $\bar{\omega}_{n+1} \geq \bar{\omega}_{n} \geq 1$ on $G$, we conclude that for all $f \in L^1_{\bar{\omega}_{n+1}}(G)$ with $\langle f \rangle = 0$, 
\begin{equation}\begin{aligned}
\label{IneqM3Contraction}
\|S_T f\|_{\bar{\omega}_{n+1}} \leq M_{n+1} \|f\|_{\bar{\omega}_{n+1}},
\end{aligned}\end{equation}
for some constant $M_{n+1} \geq 1$.

\vspace{.3cm}

The proof of the second statement follows from similar arguments, note in particular that Step 1 can be adapted by using Lemma \ref{LemmaLyapunov} case (2) instead of case (1), and that the argument giving the existence of $\rho_0$ from the properties of $\bar{\omega}_n$ still applies and gives a new $\tilde{\rho}_0$ (hence a $\tilde{T}_0$ playing the role of $T_0$) when considering $\tilde{\omega}_n$ instead of $\bar{\omega}_n$. The remaining steps follow by straightforward adaptations. 
\end{proof}


\subsection{Proof of Theorem \ref{ThmMain}}

\label{SubsectionMainProof}
In this subsection, we conclude the proof of Theorem \ref{ThmMain} using Lemma \ref{LemmaContractionm3}.
We consider the weights $w_1(x,v) = \langle x,v \rangle \ln( \langle x, v \rangle)^{0.1}$, and $w_0(x,v) = \ln( \langle x, v \rangle)^{0.1}$ for all $(x,v) \in \bar{D} \times \RR^n$. Recall the definition of the weights $\omega_i$ from (\ref{EqDefWeightsOmega}) for all $i \in \{1, \dots, 4\}$. We want to prove a decay rate for $S_t(f-g)$ with $f,g \in L^1_{\omega_{n+1}}$, $\langle f \rangle = \langle g \rangle$. We assume without loss of generality that $g \equiv 0$ so that $f \in L^1_{\omega_{n+1}}(G)$ with $\langle f \rangle = 0$.

\vspace{.5cm} 

\textbf{Step 1. } Recall that we write $L^1_{w,0}(G) = \{g \in L^1_w(G), \langle g \rangle = 0\}$, and the notation $M_1$ from Section \ref{SectionInterpolation}. We introduce the bounded projection $P: L^1(G) \to L^1_0(G)$  such that for all $h \in L^1(G)$ and $(x,v) \in G$,
\begin{equation}\begin{aligned}
\label{EqProjectionCorollary}
Ph(x,v) = h(x,v) - \frac{M_1(v)\|v\|^2} {c_1 |D|} \int_{G} h(y,w) dy dw,
\end{aligned}\end{equation} with $c_1 = \int_{\RR^n} M_1(v) \|v\|^2 dv < \infty$, where we recall that $|D|$ denotes the volume of $D$. One can see by a simple use of hyperspherical coordinates that $Ph \in L^1_{\omega_{n+1},0}(G)$ assuming $h \in L^1_{\omega_{n+1}}(G)$.  
Note that there exists $C > 0$ such that $\|Ph\|_{\omega_{n+1}} \leq C \|h\|_{\omega_{n+1}}$ for all $h \in L^1_{\omega_{n+1}}(G)$ and $\|Ph\|_{L^1} \leq C \|h\|_{L^1}$, and, since  $\langle h \rangle = 0$ implies $Ph = h$, $P$ is a bounded projection as claimed. Let $T > (T_0 \vee \tilde{T}_0)$ with $T_0$, $\tilde{T}_0$ given by Lemma \ref{LemmaContractionm3}. 
From Theorem \ref{ThmPositivityL1contraction}, we have
\[ \vertiii{S_T}_{L^1_0(G) \to L^1_0(G)} \leq 1,\]
and from Lemma \ref{LemmaContractionm3}, 
\[ \vertiii{S_T}_{L^1_{\tilde{\omega}_{n+1},0}(G) \to L^1_{\tilde{\omega}_{n+1},0}(G)} \leq \tilde{M}_{n+1}. \]
We apply Corollary \ref{firstCorollInterpol} with the projection $P$ and the values: 
\begin{enumerate}
\item $A_1 = \tilde{A}_1 = L^1(G)$, and, using the definition of $P$, $A'_1 = \tilde{A}'_1 = L^1_0(G)$,
\item $A_2 = \tilde{A}_2 = L^1_{\tilde{\omega}_{n+1}}(G)$, and, using the definition of $P$, $A'_2 = \tilde{A}'_2 = L^1_{\tilde{\omega}_{n+1}, 0}(G)$,
\item $\theta = \frac{2}{2n+1} \in (0,1)$ so that $A_{\theta} = \tilde{A}_{\theta} = L^1_{\mu}(G)$, where $\mu$ is defined on $\bar{D} \times \RR^n$ by \[\mu(x,v) = \langle x,v \rangle = \tilde{\omega}_{n+1}(x,v)^{\frac{2}{2n+1}},\] and, using the definition of $P$, $\tilde{A}'_{\theta} = A'_{\theta} = (A'_1 + A'_2) \cap A_{\theta} = L^1_{\mu,0}(G)$.
\end{enumerate}
We conclude that there exists $C_{\mu} > 0$ such that 
\[ \|S_T f\|_{\mu} \leq C_{\mu} \|f\|_{\mu}.\]

 Using Corollary \ref{Coroll2Goulaouic}, we obtain, for $\mu'(x,v) = \ln(\langle x, v \rangle)$ on $\bar{D} \times \RR^n$, since $f \in L^1_{\mu',0}(G)$, $\|S_T f\|_{\mu'} \leq C_{\mu'} \|f\|_{\mu'}$ for some constant $C_{\mu'} > 0$. Finally, since $w_0(x,v) = \mu'(x,v)^{0.1}$ for all $(x,v) \in \bar{D} \times \RR^n$, we apply one more time Corollary \ref{firstCorollInterpol}  with the projection $P$ to conclude that, for all $T > T_0 \vee \tilde{T}_0$, there exists $\tilde{W}_0 \geq 1$ such that, using $f \in L^1_{w_0,0}(G)$,
\[ \begin{aligned}
 \|S_T f\|_{w_0}\leq \tilde{W}_0 \|f\|_{w_0}.
 \end{aligned} \]
Since $(S_t)_{t \geq 0}$ is a strongly continuous semigroup of operators on $L^1_{w_0}(G)$, this implies, using the growth bound of the semigroup, that there exists $W_0 \geq 1$ such that for all $t \in (0,T)$, for all $f \in L^1_{w_0,0}(G)$,
 \begin{equation}\begin{aligned}
 \label{EqContractionW0}
  \|S_T f\|_{w_0} = \|S_{T-t} S_t f\|_{w_0} \leq W_0 \|S_t f\|_{w_0}. 
  \end{aligned}\end{equation}
 
 \vspace{.3cm}

\textbf{Step 2. } Using Lemma \ref{LemmaLyapunov}, case (3), and (\ref{EqContractionW0}), we obtain, for some constants $C, W_0 > 0$,
\[ \begin{aligned}
\|S_T f\|_{w_1} + \frac{T}{W_0} \|S_T f\|_{w_0} \leq \|f\|_{w_1} + C(1+T)\|f\|_{L^1},
\end{aligned} \] which rewrites
\[ \|S_T f\|_{w_1} + \frac{T}{W_0} \|S_T f\|_{w_0} \leq \|f\|_{w_1} + \kappa(\rho) T \|f\|_{L^1}, \]
with, for all $\rho > 0$, $\kappa(\rho) = \frac{C(1+T(\rho))}{T(\rho)}$ so that $\kappa \leq C'$ for some constant $C'$ independent of $\rho$. Set $w_0(r) = \ln(r)^{0.1}$ for $r \geq 1$. Since $\frac{w_0(\rho)}{\kappa(\rho)} \to \infty$ when $\rho \to \infty$, one can replicate the arguments of Steps 2 to 4 of the proof of Lemma \ref{LemmaContractionm3}. We obtain 
\begin{equation} \begin{aligned}
\label{IneqBetaAlpham1m0}
\|S_T f\|_{\beta} + 3 \alpha \|S_T f\|_{w_0} &\leq \|f\|_{\beta} + \alpha \|f\|_{w_0}
\end{aligned} \end{equation} just as (\ref{EqPreliminaryBetaCase2}), for $T = T(\rho)$ large enough with $T > T_0$, $T > \tilde{T}_0$ where $T_0, \tilde{T}_0$ are given by Lemma \ref{LemmaContractionm3}, with $\beta > 0$ constant, $\alpha = \frac{\beta T}{3W_0}$ and
\begin{equation} \begin{aligned}
\label{EqDefBetaNormW1}
\|f\|_{\beta} &:= \|f\|_{L^1} + \beta \|f\|_{w_1}.
\end{aligned} \end{equation}
\vspace{.5cm}

\textbf{Step 3. } We have, from our definition of $w_i$, $i \in \{0,1\}$ and of $\omega_{n+1}$, for $(x,v) \in G$,
\[ \begin{aligned}
w_1(x,v) &= \langle x, v \rangle \ln (\langle x, v \rangle)^{0.1} \\
& = \langle x, v \rangle \ln (\langle x, v \rangle)^{0.1} \mathbf{1}_{\{ \langle x, v \rangle < \lambda \}} + \langle x, v \rangle \ln (\langle x, v \rangle)^{0.1} \mathbf{1}_{\{ \langle x, v \rangle \geq \lambda \}} \\
&\leq w_0(x,v) \lambda + \frac{\langle x, v \rangle^{n+1} \ln (\langle x, v \rangle)^{-1.6} \ln (\langle x, v \rangle)^{1.7}}{\langle x, v \rangle^{n}} \mathbf{1}_{\{ \langle x, v \rangle \geq \lambda \}} \\
&\leq w_0(x,v) \lambda + \frac{\omega_{n+1}(x,v) \ln(\lambda)^{1.7}}{\lambda^{n}} \\
&\leq w_0(x,v) \lambda + \omega_{n+1}(x,v) \epsilon_{\lambda},
\end{aligned} \]
for $\lambda$ large enough, with $\epsilon_{\lambda} = \frac{\ln(\lambda)^{1.7}}{\lambda^{n}} \to 0$ as $\lambda \to \infty$, where we used that $x \to \frac{\ln(x)^{1.7}}{x^{n}}$ is non-increasing on $(e^2, \infty)$ and that $\langle x,v \rangle \geq e^2$ for all $(x,v) \in G$. 
We deduce, since $w_1(x,v) \geq 1$ for $(x,v) \in G$, 
\begin{equation}\begin{aligned}
\label{IneqControlLambdaBeta}
\frac{1}{\lambda(1+\beta)} \|f\|_{\beta} = \frac{1}{\lambda(1+\beta)}  (\|f\|_{L^1} + \beta \|f\|_{w_1}) \leq \frac{1}{\lambda} \|f\|_{w_1} \leq \|f\|_{w_0} + \frac{\epsilon_{\lambda}}{\lambda} \|f\|_{\omega_{n+1}}. 
\end{aligned}\end{equation}
Moreover, for $\tilde{\beta}$, $\tilde{\alpha}$ the positive values used to define $\vertiii{.}_{\omega_{n+1}}$ when applying Lemma \ref{LemmaContractionm3} with $\epsilon = 0.6$, one has, setting $B = \alpha/\tilde{\beta}$,
\begin{equation}\begin{aligned}
\label{IneqTripleNormOmega}
\frac{\alpha \epsilon_{\lambda}}{\lambda} \|S_T f\|_{\omega_{n+1}} = \frac{\alpha}{\tilde{\beta}} \frac{\epsilon_{\lambda}}{\lambda} \tilde{\beta} \|S_T f\|_{\omega_{n+1}} \leq B \frac{\epsilon_{\lambda}}{\lambda} \vertiii{S_T f}_{\omega_{n+1}},
\end{aligned}\end{equation}
%
%
%
with the definition given in Lemma \ref{LemmaContractionm3} for $\vertiii{.}_{\omega_{n+1}}$.
Let $\delta := \frac{\alpha}{1+\beta}$, $Z := 1 + \frac{\delta}{\lambda}$ with $\lambda \geq \lambda_0 \geq 1$, $\lambda_0$ large enough so that $Z \leq 2$. Then
\[ \begin{aligned}
Z(\|S_T f\|_{\beta} + \alpha \|S_T f\|_{w_0}) &\leq \|S_T f\|_{\beta} + \frac{\alpha}{\lambda(1+\beta)}  \|S_T f\|_{\beta} + Z \alpha \|S_T f\|_{w_0} \\
& \leq \|S_T f\|_{\beta} + \alpha \|S_T f\|_{w_0} + \frac{\alpha \epsilon_{\lambda}}{\lambda} \|S_T f\|_{\omega_{n+1}} + Z \alpha \|S_T f\|_{w_0} \\
&\leq \|S_T f\|_{\beta} + 3 \alpha \|S_T f\|_{w_0} + \frac{B \epsilon_{\lambda}}{\lambda} \vertiii{S_T f}_{\omega_{n+1}} \\
&\leq \|f\|_{\beta} + \alpha \|f\|_{w_0} + \frac{B \epsilon_{\lambda}}{\lambda} \vertiii{S_T f}_{\omega_{n+1}},
\end{aligned} \]
where we used (\ref{IneqBetaAlpham1m0}), (\ref{IneqControlLambdaBeta}) and (\ref{IneqTripleNormOmega}). 
We introduce the norm $\vertiii{.}_{w_1}$ defined, for all function $h \in L^1_{w_1}(G)$, by \[\vertiii{h}_{w_1} := \|h\|_{\beta} + \alpha \|h\|_{w_0},\] so that the previous inequality rewrites
\[ Z \vertiii{S_T f}_{w_1} \leq \vertiii{f}_{w_1} + \frac{B \epsilon_{\lambda}}{\lambda} \vertiii{S_T f}_{\omega_{n+1}}. \]

\vspace{.5cm}

\textbf{Step 4.} We set $u_0 = \vertiii{f}_{w_1}$, and, for $k \geq 1$, $u_k = \vertiii{S_{kT} f}_{w_1}$. We also set  $v_0 = \vertiii{f}_{\omega_{n+1}}$ and, for $k \geq 1$, $v_k = \vertiii{S_{kT} f}_{\omega_{n+1}}$. Note that $v_k \leq v_0$ for all $k \geq 1$ by Lemma \ref{LemmaContractionm3}. Setting $Y = \frac{B \epsilon_{\lambda} }{\lambda}$, the previous inequality writes, 
\[ \begin{aligned}
Zu_1 \leq u_0 + Y v_1.
\end{aligned} \]
Iterating this inequality, we obtain
\[ \begin{aligned}
Z^k u_k \leq u_0 + Y \sum_{i=1}^k Z^{i-1} v_i,
\end{aligned} \]
from which we conclude that
\[ u_k \leq Z^{-k}u_0 + \frac{YZ}{Z-1} \sup_{i \geq 1} v_i \leq Z^{-k}u_0 + \frac{YZ}{Z-1} v_0 . \]
From this we deduce, recalling the definition of the $\beta$-norm (\ref{EqDefBetaNormW1}) and that   $Z \leq 2$,  $w_1 \leq \omega_{n+1}$
\[ \begin{aligned}
\vertiii{S_{kT}f}_{w_1} &\leq \frac{1}{(1+\delta/\lambda)^k} (1 + \beta + \alpha) \|f\|_{w_1} + \epsilon_{\lambda} \frac{2 B}{\delta} \vertiii{f}_{\omega_{n+1}} \\
&\leq C \Big(e^{-\frac{kT}{\lambda} \frac{\delta}{2T}} + \epsilon_{\lambda}\Big) \|f\|_{\omega_{n+1}},
\end{aligned} \]
with $C > 0$ explicit, where we used that $\vertiii{\cdot}_{\omega_{n+1}} \lesssim \|\cdot\|_{\omega_{n+1}}$. \bb We set $T_1 = kT$ and choose $\lambda = \Big(\frac{T_1 \frac{\delta}{2T}}{\ln \big(T_1^{n} \big)}\Big)$ with $k \geq k_0$, $k_0 \geq 1$ large enough so that $\lambda > \lambda_0$ and $T_1 > \exp(1)$ to obtain
\[ \begin{aligned}
\vertiii{S_{T_1}f}_{w_1} \leq C'(n) \Big(\frac{\ln(T_1)^{n+2}}{(T_1)^{n}} \Big) \|f\|_{\omega_{n+1}},
\end{aligned} \]
for some $C'(n) > 0$ depending explicitely on $C$, independent of $k$, where we used the trivial inequality $\frac{T_1}{\ln(T_1)} \leq T_1$. Upon modifying the definition of $C'(n)$ so that the previous inequality also holds for $k \in \{1,\dots k_0-1\}$, we can rewrite it as
\begin{equation}\begin{aligned}
\label{IneqRateResultw_1}
\vertiii{S_{kT}f}_{w_1} \leq C'(n) \Theta(k) \|f\|_{\omega_{n+1}},
\end{aligned}\end{equation}
with $\Theta(k) = \frac{\ln(kT)^{n+2}}{(kT)^{n}}$ for all $k \geq 1$.
\dd
\vspace{.5cm}

\textbf{Step 5.} With the norm $\vertiii{.}_{w_1}$, (\ref{IneqBetaAlpham1m0}) rewrites 
\[ \vertiii{S_T f}_{w_1} + 2 \alpha \|S_T f\|_{w_0} \leq \vertiii{f}_{w_1}. \]
By iterating this inequality and summing, we obtain, for $l \geq 1$, writing $[x]$ for the floor of $x \in \RR$,
\begin{equation}\begin{aligned}
\label{IneqPrelimFinalMain}
0 \leq \vertiii{S_{lT} f}_{w_1} + 2\alpha \sum_{k = [\frac{l}{2}] + 1}^l \|S_{kT} f\|_{w_0} \leq  \vertiii{S_{{[\frac{l}{2}]}T} f}_{w_1}.
\end{aligned}\end{equation}
Note that for any $1 \leq k \leq l$,
\[ \|S_{lT} f\|_{L^1} \leq \|S_{kT} f\|_{L^1} \leq \|S_{kT} f\|_{w_0}. \] Hence, from   (\ref{IneqRateResultw_1}) and (\ref{IneqPrelimFinalMain}), 
\[ \begin{aligned}
\min(1, 2\alpha) \Big(l - \Big[\frac{l}{2}\Big] + 1\Big) \|S_{lT} f\|_{L^1} \leq C'(n) \Theta\Big( \Big[\frac{l}{2} \Big] \Big) \|f\|_{\omega_{n+1}},
\end{aligned} \]
so that, for some $C > 0$
\[ \|S_{lT} f\|_{L^1} \leq C \frac{\ln(lT)^{n+2}}{(lT)^{n+1}} \|f\|_{\omega_{n+1}}. \]
We conclude to the desired rate by standard semigroup properties.

\subsection{Proof of Theorem \ref{ThmEquilibrium} and Corollary \ref{MainCorol}}
\label{SubsectionProofEquilibrium}

In this subsection, we prove Theorem \ref{ThmEquilibrium} and Corollary \ref{MainCorol} using the result of Theorem \ref{ThmMain}. We first show the following lemma, from which we will deduce both the uniqueness property in Theorem \ref{ThmEquilibrium} and Corollary \ref{MainCorol}. Recall the definition of $m_n$ from (\ref{EqDefWn}) and that $m_n \equiv \omega_{n+1}^{\frac{n}{n+1}}$ on $G$.

\begin{lemma}
\label{LemmaRatemn}
There exists an explicit constant $C' > 0$ such that for all $t \geq 0$, for all $f,g \in L^1_{\omega_{n}}(G)$ with $\langle f \rangle = \langle g \rangle$, there holds
\[ \|S_t (f-g)\|_{L^1} \leq \frac{C' \ln(1+t)^{n+1}}{(1+t)^{n}} \|f-g\|_{m_{n}}. \]
\end{lemma}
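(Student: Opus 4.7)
The result follows by interpolation between Theorem \ref{ThmMain} and the $L^1$-contraction property of Theorem \ref{ThmPositivityL1contraction}, using the pointwise identity $m_n = \omega_{n+1}^{n/(n+1)}$ valid on all of $\bar{D} \times \RR^n$. By linearity of $S_t$, I may assume $g \equiv 0$, so that $f \in L^1_{m_n}(G)$ with $\langle f \rangle = 0$, and the aim is to bound $\|S_t f\|_{L^1}$.

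Fix $t \geq 0$. I apply Corollary \ref{firstCorollInterpol} with the bounded projection $P$ from (\ref{EqProjectionCorollary}) and the following two endpoint data. As the first endpoint, take $A_1 = \tilde{A}_1 = L^1(G)$, so that $A_1' = \tilde{A}_1' = L^1_0(G)$; here $S_t$ acts with operator norm $N_1 \leq 1$ by Theorem \ref{ThmPositivityL1contraction}. As the second endpoint, take $A_2 = L^1_{\omega_{n+1}}(G)$ and $\tilde{A}_2 = L^1(G)$, so that $A_2' = L^1_{\omega_{n+1},0}(G)$ and $\tilde{A}_2' = L^1_0(G)$; here Theorem \ref{ThmMain} provides the operator norm bound $N_2 \leq C \ln(1+t)^{n+2} (1+t)^{-(n+1)}$. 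The required boundedness of $P$ on $L^1(G)$, on $L^1_{\omega_{n+1}}(G)$, and on the interpolation space $L^1_{m_n}(G)$ follows exactly as in Step 1 of Subsection \ref{SubsectionMainProof}, using the Gaussian factor $M_1(v)\|v\|^2$ in the definition of $P$ to absorb the $\|v\|^{-(n+1)}$ singularity of $\omega_{n+1}$ near $v=0$ via spherical coordinates.

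Choosing $\theta = 1/(n+1)$, one has $\phi_\theta = 1^\theta \cdot \omega_{n+1}^{1-\theta} = m_n$ and $\tilde{\phi}_\theta = 1$, whence $A_\theta' = L^1_{m_n,0}(G)$ and $\tilde{A}_\theta' = L^1_0(G)$. Corollary \ref{firstCorollInterpol} then yields, for some explicit $C_1 > 0$,
\begin{equation*}
\|S_t f\|_{L^1} \leq C_1 N_1^\theta N_2^{1-\theta} \|f\|_{m_n} \leq C_1 C^{n/(n+1)} \frac{\ln(1+t)^{n(n+2)/(n+1)}}{(1+t)^{n}} \|f\|_{m_n}.
\end{equation*}
Since $n(n+2) \leq (n+1)^2$, the logarithmic exponent $n(n+2)/(n+1)$ is at most $n+1$, so $\ln(1+t)^{n(n+2)/(n+1)} \leq \ln(1+t)^{n+1}$ whenever $\ln(1+t) \geq 1$, while the complementary short-time regime is absorbed into the final constant $C'$ via the trivial estimate $\|S_t f\|_{L^1} \leq \|f\|_{L^1} \leq \|f\|_{m_n}$ (recall $m_n \geq 1$).

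The argument is essentially a mechanical application of the interpolation machinery of Section \ref{SectionInterpolation} once Theorem \ref{ThmMain} is available; the only mildly technical point is the simultaneous boundedness of the projection $P$ on the three $L^1$-weighted spaces involved, which was already handled during the proof of Theorem \ref{ThmMain}, so no genuine obstacle arises.
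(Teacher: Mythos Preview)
Your proof is correct and follows essentially the same route as the paper: reduce to $g\equiv 0$, then interpolate via Corollary \ref{firstCorollInterpol} (with the same projection $P$) between the $L^1_0\to L^1_0$ contraction of Theorem \ref{ThmPositivityL1contraction} and the $L^1_{\omega_{n+1},0}\to L^1_0$ decay of Theorem \ref{ThmMain}, using $m_n = \omega_{n+1}^{n/(n+1)}$. Your choice $\theta = 1/(n+1)$ is the right one given the convention $\phi_\theta = \phi_1^\theta \phi_2^{1-\theta}$ of Lemma \ref{LemmaInterpolation} (the paper's stated value $\theta = n/(n+1)$ appears to be a typo, as confirmed by the exponent $\tilde\Theta(t)^{n/(n+1)}$ in its conclusion), and you are slightly more careful than the paper in handling the short-time regime where $\ln(1+t)<1$.
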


\begin{proof}
 We set $\tilde{f} := f - g$ so that $\langle \tilde{f} \rangle = 0$ and $\tilde{f} \in L^1_{m_n,0}(G)$.
From Theorem \ref{ThmPositivityL1contraction}, we have for all $t \geq 0$,  \[ \vertiii{S_t}_{L^1_0(G) \to L^1_0(G)} \leq 1,\] and from Theorem \ref{ThmMain},
\[\vertiii{S_t}_{L^1_{\omega_{n+1},0}(G) \to L^1_0(G)} \leq C \frac{\ln(1+t)^{n+2}}{t^{n+1}} = C \tilde{\Theta}(t), \]
the last equality standing for a definition of $\tilde{\Theta}(t)$, with $C > 0$ independent of $t$. 
We introduce the projection $P: L^1(G) \to L^1_0(G)$ given, for $h \in L^1(G)$ by
\[ Ph(x,v) = h(x,v) - \frac{M_1(v)\|v\|^2}{|D| c_1} \int_G h(y,w) dw dy, \quad (x,v) \in G, \]
with $c_1 = \int_{v \in \RR^n} M_1(v) \|v\|^2 dv$ a normalizing constant, see Section \ref{SectionInterpolation} for the definition of $M_1$.
Note that if $h \in L^1_{\omega_{n+1}}(G)$, $Ph \in L^1_{\omega_{n+1},0}(G)$ as one can check using hyperspherical coordinates, and that $\langle Ph \rangle = 0$. Moreover, $P$ sends $L^1_r(G)$ to $L^1_{r,0}(G)$ for any weight $1 \leq r \leq \omega_{n+1}$ and is bounded.

We apply Corollary \ref{firstCorollInterpol} with the projection $P$ and 
\begin{enumerate}[i.]
\item $A_1 = \tilde{A}_1 = \tilde{A}_2 = L^1(G)$, 
\item $A_2 = L^1_{\omega_{n+1}}(G)$, 
\item $A'_1 = \tilde{A}'_1 = \tilde{A}'_2 = L^1_0(G)$, $A_2' = L^1_{\omega_{n+1},0}(G)$ ,
\item $\theta = \frac{n}{n+1}$ so that $A_{\theta} = L^1_{m_n}(G)$, $\tilde{A}_{\theta} = L^1(G)$,
\item  $A'_{\theta} = (A'_1 + A'_2) \cap A_{\theta} = L^1_{m_n,0}(G)$ and  $\tilde{A}'_{\theta} = (\tilde{A}'_1 + \tilde{A}'_2) \cap \tilde{A}_{\theta} = L^1_0(G)$. 
\end{enumerate}
We deduce that for some explicit constant $C' > 0$, for all $t > 0$,
\[ \vertiii{S_t}_{L^1_{m_n,0}(G) \to L^1_0(G)} = C' \tilde{\Theta}(t)^{\frac{n}{n+1}} = C' \frac{\ln(1+t)^{\frac{n(n+2)}{n+1}}}{(1+t)^n} \leq C' \frac{\ln(1+t)^{n+1}}{(1+t)^n}. \] 

\vspace{-.5cm}
\end{proof}

\begin{proof}[Proof of Theorem \ref{ThmEquilibrium}]

\textbf{Step 1: Uniqueness. } Assume that there exists two functions $f_{\infty}, g_{\infty}$, both belonging to $L^1_{m_n}(G)$, with the desired properties.  Applying Lemma \ref{LemmaRatemn}, we have, for some $C > 0$, for all $t \geq 0$,
\[  \|S_t(f_{\infty} - g_{\infty})\|_{L^1} \leq C \frac{\ln(1+t)^{n+1}}{(t+1)^n} \|f_{\infty} - g_{\infty}\|_{m_n}. \]
For all $t \geq 0$, we have $S_t f_{\infty} = f_{\infty}$ and  $S_t g_{\infty} = g_{\infty}$. Set $\delta(t) := C \frac{\ln(1+t)^{n+1}}{(t+1)^n}$. We deduce that, for all $t \geq 0$,
\[ \|f_{\infty} - g_{\infty}\|_{L^1} \leq \delta(t) \|f_{\infty} - g_{\infty}\|_{m_n}. \]
We conclude that $f_{\infty} = g_{\infty}$ a.e. on $G$ since $\delta(t) \to 0$ as $t \to \infty$.

\vspace{.5cm}

\textbf{Step 2: Existence.} Recall that for all $k \in \llbracket n-1, n+1 \rrbracket$, for all $(x,v)$ in $D \times \RR^n$, $m_{k}(x,v) = \langle x, v \rangle^{k} \ln(\langle x, v \rangle)^{-1.6 \frac{n}{n+1}}$ . Let $g \in L^1_{m_{n+1}}(G)$ with $g \geq 0$ and $\langle g \rangle = 1$. We apply Lemma \ref{LemmaContractionm3} with $k = n+1$ and $\epsilon = 1.6 \frac{n}{n+1} - 1 \in (0,1)$, so that $\bar{\omega}_{n+1} = m_{n+1}$ and $\bar{\omega}_n = m_n$ and fix $T \geq T_0$. We set, for all $k \geq 1$,
\[ g_k := S_{Tk} g \qquad \text{and }f_k : = g_{k+1} - g_k.\] By mass conservation, for all $k \geq 1$, $\langle g_k \rangle = 1$ so that $\langle f_k \rangle = 0$ and $f_k \in L^1_{m_{n+1},0}(G)$. Applying (\ref{EqContractionLemmaIntermediate}), we have, for some constants $\beta, \alpha > 0$, setting $\|.\|_{\beta} = \|.\|_{L^1} + \beta \|.\|_{m_{n+1}}$, for all $k \geq 1$,
\[ \|S_T f_k\|_{\beta} + \alpha \|S_T f_k\|_{m_n} \leq \|f_k\|_{\beta} + \frac{\alpha}{3} \|f_k\|_{m_n}. \]
We introduce the modify norm $\vertiii{.}_{\tilde{\alpha}}$ defined by 
$\vertiii{.}_{\tilde{\alpha}} = \|.\|_{\beta} + \frac{\alpha}{3} \|.\|_{m_n}, $
so that the previous inequality reads
\begin{equation}\begin{aligned}
\label{EqModifiedAlphaTilde}
 \vertiii{S_T f_k}_{\tilde{\alpha}} + \frac{2 \alpha}{3} \|S_T f_k\|_{m_n} \leq \vertiii{f_k}_{\tilde{\alpha}}. 
 \end{aligned}\end{equation}
This implies that
\[ \vertiii{ f_{k+1}}_{\tilde{\alpha}} \leq \vertiii{f_k}_{\tilde{\alpha}}, \]
for all $k \geq 1$, so that the sequence $(\vertiii{f_k}_{\tilde{\alpha}})_{k \geq 1}$ is non-negative, non-decreasing, and is thus a converging subsequence. We fix $\epsilon > 0$. The previous observation implies that for $N \geq 0$ large enough, $p > l \geq N$, 
\[ 0 \leq \vertiii{f_l}_{\tilde{\alpha}} - \vertiii{f_p}_{\tilde{\alpha}} \leq \frac{2\alpha}{3} \epsilon. \]

Let $N$ as before, $p > l \geq N$. We have, using (\ref{EqModifiedAlphaTilde}) 
\[ \begin{aligned}
\frac{2 \alpha}{3} \Big\|g_{p+1} - g_{l+1}\Big\|_{m_n} &= \frac{2 \alpha}{3} \Big\|\sum_{k = l+1}^{p} f_k \Big\|_{m_n} \\
&\leq \sum_{k = l}^{p-1}  \frac{2 \alpha}{3 } \Big\|S_T f_k \Big\|_{m_n} \\
&\leq \sum_{k = l}^{p-1} \Big( \frac{2 \alpha}{3 }\|S_T f_k\|_{m_n} + \vertiii{S_T f_k}_{\tilde{\alpha}} \Big) - \sum_{k = l}^{p-1} \vertiii{S_T f_k}_{\tilde{\alpha}} \\
&\leq \sum_{k = l}^{p-1} \vertiii{f_k}_{\tilde{\alpha}} - \sum_{k = l}^{p-1} \vertiii{S_T f_k}_{\tilde{\alpha}} = \vertiii{f_l}_{\tilde{\alpha}} - \vertiii{f_p}_{\tilde{\alpha}} \leq \frac{2\alpha}{3} \epsilon,
\end{aligned} \]
by choice of $p$ and $l$. We deduce that the sequence $(g_k)_{k \geq 1}$ is a Cauchy sequence in the Banach space $L^1_{m_n}(G)$, hence converges towards a limit $f_{\infty} \in L^1_{m_n}(G)$ with $\langle f_{\infty} \rangle = \langle g \rangle = 1$ by mass conservation. A similar argument to the one in Step 1 can be used to prove that this limit is independent of the choice of $g \in L^1_{m_{n+1}}(G)$ with $\langle g \rangle = 1$. 
\end{proof}

\begin{proof}[Proof of Corollary \ref{MainCorol}]
We simply apply Lemma \ref{LemmaRatemn} with $g = f_{\infty}$, $f_{\infty}$ given by Theorem \ref{ThmEquilibrium}, to obtain Corollary \ref{MainCorol}.
\end{proof}

\color{black}

\section{Free-transport with absorbing boundary condition}
\label{SectionAbsorbing}

We consider in this section the free transport equation with absorbing condition at the boundary, which corresponds to (\ref{Problem2}).
We make the same assumptions as before on $D$, $n$ and $x \to n_x$. This problem is well-posed in the $L^1$ setting, since the boundary operator has norm $0$, see \cite[Theorem 3.5]{ArkerydCercignani1993}. Assuming $f_0 \in L^1(G)$, the characteristic method gives an explicit solution for all times $t \geq 0$, almost all $(x,v) \in G$,
\begin{equation}\begin{aligned}
\label{EqExplicitAbsorbing}
f(t,x,v) = f_0(x-tv,v) \mathbf{1}_{\{t < \sigma(x,-v)\}},
\end{aligned}\end{equation}
where $\sigma(x,v)$ is defined by (\ref{NotatSigma}) for all $(x,v)$ in $\bar{D} \times \RR^n$. This explicit formula makes the positivity of (\ref{Problem2}) clear. Obviously mass is not preserved by this problem. In what follows, we write $(S_t)_{t \geq 0}$ for the semigroup of linear operators corresponding to this evolution problem.  For $f_0 \in L^1(G)$, and $f$ the associated solution to (\ref{Problem2}) on $[0,\infty) \times G$, the trace $\gamma f(.,.,.)$ is well-defined on $[0, T) \times \pD \times \RR^n$ for any $T > 0$, see \cite[Theorem 1]{Mischler99}. Moreover, from \cite[Corollary 1]{Mischler99}, 
\begin{equation}\begin{aligned}
\label{EqCorollMischler2}
 |\gamma f(t,x,v)| = \gamma |f|(t,x,v) \quad \text{a.e. in } \big((0, \infty) \times \partial_+ G \big) \cup \big((0,\infty) \times \partial_- G\big).
 \end{aligned}\end{equation}
From the explicit solution (\ref{EqExplicitAbsorbing}), one easily deduces the convergence towards the equilibrium distribution given by $f_{\infty}(x,v) = 0$ for all $(x,v) \in \bar{D} \times \RR^n$.
We study the convergence rate of (\ref{Problem2}) towards equilibrium.
We recall that the weights $r_{\nu}$ for $\nu > 0$, are given by
\begin{equation}\begin{aligned}
r_{\nu}(x,v) = (1+ \sigma(x,v))^{\nu}, \qquad (x,v) \in \bar{D} \times \RR^n.
\end{aligned}\end{equation}

\begin{thm}
\label{ThmAbsorbing}
For any $f \in L^1_m(G)$, $t \geq 0$,
\[ \|S_t f\|_{L^1} \leq \Theta(t) \|f\|_{m}, \] with the following choices 
\begin{enumerate}[i.]
\item $m(x,v) = e^{\sigma(x,v)}$ in $\bar{D} \times G$, $\Theta(t) = e^{-t}.$
\item $m(x,v) = r_{\nu}(x,v)$ in $\bar{D} \times G$, $\Theta(t) = \frac{1}{(t+1)^{\nu}}$, $\nu > 1.$
\end{enumerate}
\end{thm}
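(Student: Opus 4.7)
The plan is to exploit the explicit characteristic formula (\ref{EqExplicitAbsorbing}) directly, which reduces both statements to an elementary pointwise bound on the survival indicator. Starting from
\[ \|S_t f\|_{L^1} = \int_{D \times \RR^n} |f(x-tv,v)| \, \mathbf{1}_{\{t < \sigma(x,-v)\}} \, dv \, dx, \]
I would perform the translation $y = x - tv$ at fixed $v$, which has unit Jacobian. The key geometric observation is that, for $x \in D$, the condition $t < \sigma(x,-v)$ amounts to saying that the whole segment $\{x - sv : s \in [0,t]\}$ stays in $D$, which is in turn equivalent to $y = x - tv \in D$ together with $\sigma(y,v) > t$ (the forward characteristic from $y$ with velocity $v$ surviving for at least time $t$). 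This identifies
\[ \|S_t f\|_{L^1} = \int_G |f(y,v)| \, \mathbf{1}_{\{\sigma(y,v) > t\}} \, dv \, dy. \]

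For case (i), on $\{\sigma(y,v) > t\}$ one has $e^{\sigma(y,v)-t} \geq 1$, hence $\mathbf{1}_{\{\sigma(y,v) > t\}} \leq e^{\sigma(y,v)-t}$, and factoring $e^{-t}$ out yields $\|S_t f\|_{L^1} \leq e^{-t} \|f\|_m$. For case (ii), analogously $\mathbf{1}_{\{\sigma(y,v) > t\}} \leq (1+\sigma(y,v))^{\nu}(1+t)^{-\nu}$ (trivially when the indicator is zero, and from $(1+\sigma)^\nu \geq (1+t)^\nu$ when $\sigma \geq t$), yielding the polynomial decay; the restriction $\nu > 1$ is not used in the bound itself but ensures that the rate is integrable, as will be relevant elsewhere.

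\textbf{Discussion.} Essentially no difficulty arises beyond carefully tracking the image under the change of variables; no spectral, Lyapunov or Harris-type ingredient is needed here, in sharp contrast with Theorem \ref{ThmMain}. One could alternatively follow the Hairer-style argument sketched in the introduction: verify $v \cdot \nabla_x r_\nu = -\phi(r_\nu)$ with $\phi(u) = \nu u^{(\nu-1)/\nu}$, introduce $\psi(t,u) = (H(u)+t+1)^\nu$ with $H(u) = u^{1/\nu}-1$, and show that $t \mapsto \|S_t f\|_{\psi(t,r_\nu)}$ is non-increasing by differentiating under the integral. The interior contribution vanishes through the identity $\partial_t \psi + v \cdot \nabla_x \psi = 0$, the $\partial_+ G$ boundary term is non-positive since $v \cdot n_x < 0$ and $\gamma_+ |S_t f| \geq 0$ by (\ref{EqCorollMischler2}), and the $\partial_- G$ term is zero by the absorbing condition $\gamma_- f = 0$; the bound then follows from $(t+1)^\nu \leq \psi(t,r_\nu)$ combined with $\psi(0,r_\nu) = r_\nu$. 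I would nonetheless present the direct approach as the main argument since it is shorter and sharper, and keep the Hairer picture only as motivation for the choice of weight in case (ii).
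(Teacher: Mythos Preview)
Your proof is correct and takes a genuinely different route from the paper. The paper proves both cases via differential inequalities on weighted norms: for (i) it computes $\frac{d}{dt}\|S_t f\|_{e^{\sigma}} \leq -\|S_t f\|_{e^{\sigma}}$ using Green's formula and $v\cdot\nabla_x\sigma=-1$, then applies Gr\"onwall; for (ii) it carries out precisely the Hairer-style construction you sketch as your \emph{alternative}, introducing $\phi(u)=\nu u^{(\nu-1)/\nu}$, $H(u)=u^{1/\nu}-1$, $\psi(t,u)=(H(u)+t+1)^{\nu}$, and showing $t\mapsto\|S_t f\|_{\psi(t,r_\nu)}$ is non-increasing. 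Your main argument instead bypasses all differentiation by exploiting the explicit formula (\ref{EqExplicitAbsorbing}): the change of variables $y=x-tv$ reduces everything to the pointwise bound $\mathbf{1}_{\{\sigma(y,v)>t\}}\leq \Theta(t)\,m(y,v)$, which is immediate. Your approach is shorter, more elementary, and makes transparent why $\nu>1$ is irrelevant for the inequality itself; the paper's approach, on the other hand, has pedagogical value in that it exercises in a trivial setting the same Lyapunov machinery that drives the main results, and it would survive in situations where no closed-form solution is available.
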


\begin{proof}
Recall that $(v \cdot \nabla_x \sigma(x,v))= - 1$ for all $(x,v) \in G$, see (\ref{EqDerivativeSigma}). Note that, as a trivial consequence of the boundary condition, $\gamma S_t f = 0$ on $\partial_- G$ for all $f \in L^1(G)$.

For \textit{i.}, we have, by definition of $(S_t)_{t \geq 0}$, using also (\ref{EqCorollMischler2}), 
\[ \begin{aligned}
\frac{d}{dt} \int_{G} |S_t f| e^{\sigma(x,v)} dv dx &= \int_{G} (- v \cdot \nabla_x |S_t f|) e^{\sigma(x,v)} dvdx  \\
&= - \int_{G} |S_t f| e^{\sigma(x,v)} dv dx + \int_{\pD \times \RR^n} |\gamma(S_t) f| (v \cdot n_x) e^{\sigma(x,v)} dv d\zeta(x) \\
&= - \|S_t f\|_{m} + 0 - \int_{\partial_+ G} |\gamma(S_t f)| |v \cdot n_x| e^{\sigma(x,v)} dv d\zeta(x) \\
&\leq - \|S_t f\|_{m} 
\end{aligned} \]
where we used Green's formula (recall that $n_x$ is the unit inward normal vector at $x \in \pD$) and the boundary condition. We conclude with a straightforward application of Gr\"onwall's lemma.

\vspace{.5cm}

To prove \textit{ii.}, we differentiate the $L^1_{r_{\nu}}(G)$ norm of $S_t f$ and use the same arguments as in case \textit{i.} to obtain
\[ \begin{aligned}
\frac{d}{dt} \int_{G} |S_t f| (1 + \sigma(x,v))^{\nu} dv dx &= \int_{G} (-v \cdot \nabla_x |S_t f|) (1 + \sigma(x,v))^{\nu} dv dx \\
& = - \nu \int_{G} |S_t f| (1 + \sigma(x,v))^{\nu-1} dv dx \\
&\quad - \int_{\partial_+ G} |\gamma(S_t f)| |v \cdot n_x| r_{\nu}(x,v) dv d\zeta(x) \\
& \leq - \nu \int_{G} |S_t f| (1 + \sigma(x,v))^{\nu-1} dv dx.
\end{aligned} \]
Writing $\mathcal{B}$ for the generator of $(S_t)_{t \geq 0}$, we proved 
\begin{equation}\begin{aligned}
\label{IneqAdjointTransport}
\mathcal{B}^*r_{\nu} \leq - \phi(r_{\nu}),
\end{aligned}\end{equation} 
with for all $x \geq 1$, $\phi(x) = \nu x^{\frac{\nu-1}{\nu}}$, so that $\phi$ is strictly concave. 
We set for all $u \geq 1$, \[ H(u) = \int_1^u \frac{ds}{\phi(s)} = (u^{1/\nu} - 1),\]
\[ \text{ so that } H^{-1}(y) = (y + 1)^{\nu}, \quad \forall y \geq 0. \]
\[ \text{We also set } \forall t \geq 0, u \geq 1, \qquad \psi(t,u) := H^{-1}(H(u) + t) = (H(u) + t + 1)^{\nu}, \] which satisfies, for all $u \geq 1$, for all $t \geq 0$,
\begin{linenomath}
\begin{subequations}
\begin{alignat}{2}
\label{IneqDiffPsi1}
\partial_t \psi(t,u) &= \nu (H(u) + t + 1)^{\nu - 1}  = \phi(\psi(t,u)), \\
\label{IneqDiffPsi2}
\text{ and } \partial_u \psi(t,u) &= H'(u) \nu (H(u)+t+1)^{\nu - 1} = \frac{\phi(\psi(t,u))}{\phi(u)}. 
\end{alignat}
\end{subequations}
\end{linenomath}
We have, for all $t \geq 0$,
\[ \begin{aligned}
\frac{d}{dt} \int_G |S_t f| \psi(t,r_{\nu}) dv dx  &= \int_{G} \mathcal{B} |S_t f| \psi(t,r_{\nu}) + |S_t f| \partial_t \psi(t,r_{\nu}) dv dx \\
&= \int_{G}  |S_t f| \Big(\mathcal{B}^* \psi(t,r_{\nu}) + \partial_t \psi(t,r_{\nu}) \Big) dv dx \\
&= \int_{G}  |S_t f| \Big( (\mathcal{B}^* r_{\nu})  \partial_u \psi(t,r_{\nu}) + \partial_t \psi(t,r_{\nu}) \Big) dv dx \leq 0,
\end{aligned} \]
using (\ref{IneqAdjointTransport}) along with (\ref{IneqDiffPsi1}) and (\ref{IneqDiffPsi2}). Finally we use this inequality to conclude:
\[ \begin{aligned}
(t+1)^{\nu} \|S_t f\|_{L^1} \leq \int_G |S_t f| (H(r_{\nu}) + t + 1)^{\nu} dv dx = \|S_t f\|_{\psi(t,r_{\nu})} \leq \|f\|_{\psi(0,r_{\nu})} = \|f\|_{r_{\nu}}.
\end{aligned} \]

\vspace{-.7cm}

\end{proof}

%

\begin{rmk}
The assumption $f \in L^1_{e^{\sigma(x,v)}}(G)$ is very restrictive for $L^1(G)$ functions positive in a neighborhood of $\{\|v\| = 0\}$. On the other hand, if $f \in L^1_{e^{\sigma(x,v)}}(G)$, the problem is indeed dissipative. For such an initial datum (\ref{Problem1}) shows only the convergence rate of Theorem \ref{ThmMain}, of polynomial order. This emphasizes the fact that the convergence rate in Theorem \ref{ThmMain} depends both on the regularity (inverse moments) of the initial data, and on the boundary condition, which becomes the limiting factor for the rate of convergence with very regular initial data. 
\end{rmk}

\bibliographystyle{alpha}
\bibliography{biblio}

\end{document}